\newtheorem{thm}{Theorem}[section]
\newtheorem{lemma}[thm]{Lemma}
\newtheorem{prop}[thm]{Proposition}
\newtheorem{cor}[thm]{Corollary}
\theoremstyle{definition}
\newtheorem{rem}[thm]{Remark}
\numberwithin{equation}{section}
\author{Serban T. Belinschi}
\address{Institut de Math\'ematiques de Toulouse; UMR5219; Universit\'e de Toulouse; CNRS; UPS, F-31062 Toulouse,
FRANCE}
\email{serban.belinschi@math.univ-toulouse.fr}
\author{Hari Bercovici}
\address{Department of Mathematics and Statistics, 
Indiana University, Bloomington, IN 47405 U.S.A.}
\email{bercovic@indiana.edu}
\author{Mireille Capitaine}
\address{Institut de Math\'ematiques de Toulouse; UMR5219; Universit\'e de Toulouse; CNRS; UPS, F-31062 Toulouse,
FRANCE}
\email{mireille.capitaine@math.univ-toulouse.fr}
\thanks{HB was supported by a grant from the National Science Foundation. This work was started
while HB was visiting the Institute of Mathematics of Toulouse as Professeur Invit\'e.}
\title[Outlying eigenvalues of a polynomial in large random matrices]{On the 
outlying eigenvalues of a polynomial in large independent random matrices}
\begin{document}


\begin{abstract}
Given a selfadjoint polynomial $P(X,Y)$ in two noncommuting selfadjoint indeterminates, we 
investigate the asymptotic eigenvalue behavior 
of the random matrix $P(A_N,B_N)$, where 
$A_N$ and $B_N$ are independent Hermitian random 
matrices and the distribution of $B_N$ is invariant 
under conjugation by unitary operators. 
We assume that the empirical eigenvalue 
distributions of $A_N$ and $B_N$ converge 
almost surely to deterministic probability 
measures $\mu $ and $\nu$, respectively. 
In addition, the eigenvalues of $A_N$ and 
$B_N$ are assumed to converge uniformly 
almost surely to the support of $\mu$ and 
$\nu,$ respectively, except for a fixed finite 
number of fixed eigenvalues (spikes) of $A_N$.
It is known that almost surely the empirical distribution of the 
eigenvalues of $P(A_N,B_N)$ converges to a certain
deterministic probability measure $\eta$ (sometimes denoted $\eta=P^\square(\mu,\nu)$) 
 and, when there
are no spikes, the eigenvalues of $P(A_N,B_N)$ converge
uniformly almost surely to the support of $\eta$. When
spikes are present, we show that the eigenvalues of 
$P(A_N,B_N)$ still converge uniformly to 
the support of $\eta$, with the possible exception
of certain isolated outliers whose location can be 
determined in terms of $\mu,\nu,P$, and the 
spikes of $A_N$. We establish a similar result when 
$B_N$ is replaced by a Wigner matrix. The relation between outliers 
and spikes is described using the operator-valued  subordination 
functions of free probability theory. These results 
extend known facts from the special case in which 
$P(X,Y)=X+Y$.
\end{abstract}

\maketitle

\section{Introduction}
Let $\mu$ and $\nu$ be two Borel probability measures with bounded support on $\mathbb R$.  Suppose given, for each positive integer $N$, 
selfadjoint $N\times N$ independent random matrices 
$A_N$ and $B_N$, with the following properties:
\begin{enumerate}

\item[(a)] the distribution of  $B_N$ is invariant 
under conjugation by unitary $N\times N$
matrices;

\item[(b)] the empirical 
eigenvalue distributions of $A_N$ and $B_N$ converge 
almost surely to $\mu$ and $\nu$, respectively;

\item[(c)] the eigenvalues of $A_N$ and $B_N$ converge 
uniformly almost surely to the supports of $\mu$
and $\nu$, respectively, with the exception of a fixed number $p$ of \emph{spikes}, that is, 
fixed eigenvalues of $A_N$ that lie outside the support of $\mu$.

\end{enumerate}

{When spikes are absent, that is, when $p=0$,}
 it was shown in \cite{CM} that
the eigenvalues of 
$A_N+B_N$ converge uniformly almost 
surely to the support of the free additive 
convolution $\mu\boxplus\nu$. When $p>0$,
the eigenvalues of $A_N+B_N$ also converge 
uniformly almost surely to a compact set 
$K\subset\mathbb R$ such that 
{$\mathrm{supp}(\mu\boxplus\nu)\subset K$ and}
$K\setminus\mathrm{supp}(\mu\boxplus\nu)$ 
has no accumulation points in $\mathbb R
\setminus\mathrm{supp}(\mu\boxplus\nu)$. 
Moreover, if $t\in K\setminus\mathrm{supp}
(\mu\boxplus\nu)$, then $\omega(t)$ 
is one of the spikes of $A_N$, where $\omega$ 
is a certain subordination function
arising in free probability. 
The relative position of the eigenvectors corresponding to  
spikes and outliers is also given in terms of subordination functions.
We refer to \cite{BBCF} for this result.

Our  purpose is to show that 
analogous results hold when the sum 
$A_N+B_N$ is replaced by an arbitrary 
selfadjoint polynomial $P(A_N,B_N)$.
Then, by a comparison procedure to the particular
case when $B_N$ is a G.U.E. (Gaussian unitary
ensemble),  we are  also able to identify the outliers of 
an arbitrary selfadjoint polynomial $P(A_N,\frac{X_N}{\sqrt{N}})$ 
when $X_N$ is a Wigner matrix independent from $A_N$. This extends an earlier 
result \cite{CDMFF} pertaining to
 additive deformations 
of Wigner matrices.
More precisely we consider a Hermitian matrix
 $X_{N}=[X_{ij}]_{i,j=1}^N$, where $[X_{ij}]_{i\geq1,j\geq1}$ is 
an infinite array of random variables such that
 \begin{enumerate}\item[(X0)] $X_N$ is independent from $A_N$,
\item[(X1)] $X_{ii}$, $\sqrt{2}\Re(X_{ij}), i<j$, 
$\sqrt{2}\Im(X_{ij}), i<j$, are independent, centered with variance 1,
 \item[(X2)] there exist 
 $K,x_0>0$, $n_0\in\mathbb N$, and a random variable $Z$ 
with finite fourth moment
such that
\begin{equation}\frac{1}{n^2} \sum_{1\leq i,j\leq n}\mathbb{P}\left(\vert X_{ij}\vert >x\right) \leq 
K\mathbb{P}\left(\vert Z \vert>x\right)\quad x>x_0,n>n_0.\nonumber \end{equation}
\item[(X3)] $\sup\{\mathbb{E}(\vert X_{ij}\vert^3):i,j\in\mathbb N,i<j\}<+\infty.$ 
\end{enumerate}

\begin{rem} 
The matrix $X_N$ is called a G.U.E. if the variables $X_{ii}$, $\sqrt{2}\Re(X_{ij}),{i<j}$, and $\sqrt{2}\Im(X_{ij}),{i<j}$, are independent standard Gaussian.
Assumptions (X2) and (X3) obviously hold if 
 these variables are merely independent and
identically distributed with a finite fourth moment. 
\end{rem}

Our result lies in the lineage of recent, and not so recent, works 
\cite{BaiYao08b,BBP05,BaikSil06,BGR,C,Capitaine14,CDF09,CDMFF,FePe,FurKom81, John,LV,My,
Peche06,PRS,RaoSil09} studying the influence
of additive or multiplicative perturbations on the extremal
eigenvalues of classical random matrix models, the seminal paper being
\cite{BBP05}, where the so-called BBP phase transition was observed.

We note that Shlyakhtenko \cite{ShlB} 
considered a framework which makes it possible to
understand this kind of result as a manifestation of 
infinitesimal freeness. In fact, the results of
\cite{ShlB} also allow one to detect the presence of 
spikes from the behaviour of the bulk of 
the eigenvalues of $P(A_N,B_N)$, even when 
$P(A_N,B_N)$ has no outlying eigenvalues. In a related 
result, Collins, Hasebe and Sakuma \cite{CHS}
study the 
{`purely spike'}
 case in which $\mu=\nu=\delta_0$ and the 
eigenvalues of $A_N$ and $B_N$ accumulate
to given sequences $(a_k)_{k=1}^\infty$ 
and $(b_k)_{k=1}^\infty$ of real numbers
converging to zero.

\section{Notation and preliminaries on strong asymptotic freeness}

We recall that a $C^*$-{\em probability space} is a 
pair $(\mathcal A,\tau)$, where $\mathcal A$ is
a $C^*$-algebra and $\tau$ is a state on $\mathcal A$. 
We always assume that $\tau$ is
faithful.  The 
elements of $\mathcal A$ are 
referred to as {\em random variables}.

If $(\Omega,\Sigma,P)$ is a classical probability space,
then $(L^\infty (\Omega),\mathbb{E})$ 
is a 
$C^*$-probability space, 
where $\mathbb E$ is the usual expected value. 
Given $N\in\mathbb N$,
$(M_N (\mathbb C),{\rm tr}_N)$ is a $C^*$-probability space, 
where ${ \rm tr}_N=\frac1N{\rm Tr}_N$ 
denotes the normalized trace.  
More generally, if $(\mathcal A,\tau)$ is 
an arbitrary $C^*$-probability space and 
$N\in\mathbb N$, 
then $M_N(\mathcal A)=M_N(\mathbb C)\otimes \mathcal A$ 
becomes a $C^*$-probability 
space with the state ${\rm tr}_N\otimes\tau$.

The {\em distribution} $\mu_a$ of a selfadjoint element $a$ in 
a $C^*$-{probability space} $(\mathcal A,\tau)$ is a 
compactly supported probability measure on $\mathbb R$,
uniquely determined by the requirement that 
$\int_\mathbb Rt^n\,d\mu_a(t)=\tau(a^n)$, $n\in\mathbb N$. 
The {\em spectrum} of an element $a\in\mathcal A$ is
$$
\sigma(a)=\{\lambda\in\mathbb C\colon\lambda1-a\text{ is not invertible in  }\mathcal A\}.
$$
For instance, if $A\in M_N(\mathbb C)$ is a selfadjoint matrix, 
then the distribution of $A$ relative to ${\rm tr}_N$ 
is the measure $\mu_A=\frac1N\sum_{j=1}^N\delta_{\lambda_j(A)}$, 
where $\{\lambda_1(A),\dots,
\lambda_N(A)\}$ is the list of the 
eigenvalues of $A$, repeated according 
to multiplicity. As usual, the support $\text{supp}(\mu)$
of a Borel probability measure $\mu$ on $\mathbb R$ is the smallest 
closed set $F\subset\mathbb R$ with the property that $\mu(F)=1$.
It is known that if $a=a^*\in\mathcal A$
{and $\tau$ is faithful,}
 then 
$
\sigma(a)=\mathrm{supp}(\mu_a).
$
In the following, we assume that $\tau$ is a tracial state, that is, $\tau(ab)=\tau(ba),a,b\in\mathcal A$.

Suppose that we are given $C^*$-{probability spaces} 
$\{(\mathcal A_N,\tau_N)\}_{N=0}^\infty$ and 
selfadjoint elements $a_N\in\mathcal A_N$, $N\ge0$. We say 
that $\{a_N\}_{N=1}^\infty$ {\em converges
in distribution} to $a_0$ if 
\begin{equation}\label{unu}
\lim_{N\to\infty}\tau_N(a_N^k)=\tau_0(a_0^k),\quad k\in\mathbb N.
\end{equation}
We say that $\{a_N\}_{N=1}^\infty$ {\em converges strongly in distribution} 
to $a_0$ (or to $\mu_{a_0}$) 
if, in addition to \eqref{unu}, the sequence 
$\{\mathrm{supp}(\mu_{a_N})\}_{N=1}^\infty$
converges to $\mathrm{supp}(\mu_{a_0})$ in the Hausdorff metric. 
This condition simply means that
 for every $\varepsilon>0$ 
there exists $N(\varepsilon)\in
\mathbb N$ such that 
$$
\mathrm{supp}(\mu_{a_N})\subset\mathrm{supp}(\mu_{a_0})+
(-\varepsilon,\varepsilon)
$$
and
$$
\mathrm{supp}(\mu_{a_0})\subset\mathrm{supp}(\mu_{a_N})+
(-\varepsilon,\varepsilon)
$$
for every $ N\ge N(\varepsilon)$.
If all the traces $\tau_N$ are faithful, 
{strong convergence}
can be reformulated as follows:
$$
\lim_{N\to\infty}\|P(a_N)\|=\|P(a_0)\|,
$$
for every polynomial $P$ with complex coefficients. 
This observation allows us to extend the concept
of (strong) convergence in distribution to $k$-tuples of 
random variables, $k\in\mathbb N$. For every
$k\in\mathbb N$, we denote by $\mathbb C\langle X_1,\dots,X_k\rangle$ 
the algebra of polynomials 
with complex coefficients in $k$ noncommuting indeterminates 
$X_1,\dots,X_k$. This is a 
$*$-algebra with the adjoint operation determined by
$$
(\alpha X_{i_1}X_{i_2}\cdots X_{i_n})^*=
\overline{\alpha}X_{i_n}\cdots X_{i_2}X_{i_1},\quad
\alpha\in\mathbb C,\ i_1,i_1,\dots,i_n\in\{1,\dots,k\}.
$$

Suppose that $\{(\mathcal A_N,\tau_N)\}_{N=0}^\infty$ is a 
sequence of $C^*$-probability spaces,
$k\in\mathbb N$, and $\{a_N\}_{N=0}^\infty$ is a sequence 
of $k$-tuples $a_N=(a_{N,1},\dots,
a_{N,k})\in\mathcal A_N^k$ of selfadjoint elements. 
We say that $\{a_N\}_{N=1}^\infty$ {\em converges
in distribution} to $a_0$ if 
\begin{equation}\label{due}
\lim_{N\to\infty}\tau_N(P(a_N))=\tau_0(P(a_0)),
\quad P\in\mathbb C\langle X_1,\dots,X_k\rangle.
\end{equation}
We say that $\{a_N\}_{N=1}^\infty$ {\em converges strongly
in distribution} to $a_0$ if, in addition to \eqref{due}, we have 
$$
\lim_{N\to\infty}\|P(a_N)\|=\|P(a_0)\|,
\quad P\in\mathbb C\langle X_1,\dots,X_k\rangle.
$$

The above concepts extend to $k$-tuples $a_N=(a_{N,1},\dots,
a_{N,k})\in\mathcal A_N^k$ which do not necessarily consist of 
selfadjoint elements. The only change is 
that one must use polynomials in the variables $a_{N,j}$ and 
their adjoints $a_{N,j}^*$, $j=1,\dots,k$.

\begin{rem}\label{rem2.1}
Suppose that all the states $\tau_N,N\in\mathbb N$, are faithful. 
{As seen}
in \cite[Proposition 2.1]{CM}, $\{a_N\}_{N=1}^\infty$ converges strongly in 
distribution to $a_0$ if and only if
$\{P(a_N)\}_{N=1}^\infty$ converges strongly in 
distribution to $P(a_0)$ for every selfadjoint polynomial
$P\in\mathbb C\langle X_1,\dots,X_k\rangle.$ 
Moreover, strong convergence 
in distribution also implies strong convergence at the matricial level. 
The following result is 
\cite[Proposition 7.3]{M}.
\end{rem}

\begin{prop}\label{Camille}
Let $\{(\mathcal A_N,\tau_N)\}_{N=0}^\infty$ be $C^*$-probability 
spaces with faithful states
$\{\tau_N\}_{N=0}^\infty$, let $k\in\mathbb N$, and let $\{a_N\}_{N=0}^\infty$ be a sequence of 
$k$-tuples of selfadjoint elements $a_N\in\mathcal A_N^k$. 
Suppose that $\{a_N\}_{N=1}^\infty$
converges strongly in distribution to $a_0$. Then $\lim_{N\to\infty}\|P(a_N)\|=\|P(a_0)\|$ 
for every $n\in\mathbb N$ and every matrix polynomial
$P\in M_n(\mathbb C\langle X_1,\dots,X_k\rangle)$.
\end{prop}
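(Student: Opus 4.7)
The plan is to realize all the algebras $(\mathcal A_N,\tau_N)$ inside a single $C^*$-algebra via a $C^*$-ultraproduct, then use the scalar strong-convergence hypothesis to promote polynomial calculus into a $*$-isomorphism between the $C^*$-subalgebras generated by $a_0$ and by the ultraproduct image of $(a_N)_N$, and finally transfer the resulting norm identity to matrix polynomials via the matrix amplification functor. This converts a statement about matrix polynomials into a statement about scalar polynomials in one structural move, at the cost of working in an abstract ambient algebra.

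First, I would fix a free ultrafilter $\omega$ on $\mathbb{N}$ and form the $C^*$-ultraproduct $(\mathcal A_\omega,\tau_\omega):=\prod_\omega(\mathcal A_N,\tau_N)$; for a bounded sequence $(x_N)_N$ with $x_N\in\mathcal A_N$, the class $x_\omega:=[(x_N)_N]$ satisfies $\|x_\omega\|=\lim_{N\to\omega}\|x_N\|$. Applied to the monomials $X_j$, the scalar strong-convergence hypothesis yields $\sup_N\|a_{N,j}\|<\infty$, so each $a_{\omega,j}:=[(a_{N,j})_N]$ is a well-defined element of $\mathcal A_\omega$. Applied to arbitrary scalar polynomials, it gives
\[
\|p(a_\omega)\|_{\mathcal A_\omega}=\lim_{N\to\omega}\|p(a_N)\|_{\mathcal A_N}=\|p(a_0)\|_{\mathcal A_0},\qquad p\in\mathbb{C}\langle X_1,\dots,X_k\rangle.
\]
Consequently, the polynomial-calculus map $p(a_0)\mapsto p(a_\omega)$ is isometric on the unital $*$-subalgebra generated by the coordinates of $a_0$, and extends by continuity to a $*$-isomorphism $\pi\colon C^*(1,a_0)\to C^*(1,a_\omega)$ of the generated unital $C^*$-subalgebras.

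Tensoring with the identity on $M_n(\mathbb{C})$, I obtain a $*$-isomorphism $\mathrm{id}_{M_n(\mathbb{C})}\otimes\pi$ from $M_n(C^*(1,a_0))$ onto $M_n(C^*(1,a_\omega))$, which is automatically isometric and sends $P(a_0)$ to $P(a_\omega)$ for every matrix polynomial $P\in M_n(\mathbb{C}\langle X_1,\dots,X_k\rangle)$; this yields $\|P(a_0)\|_{M_n(\mathcal A_0)}=\|P(a_\omega)\|_{M_n(\mathcal A_\omega)}$. The canonical identification $M_n(\mathcal A_\omega)\cong\prod_\omega M_n(\mathcal A_N)$ then represents $P(a_\omega)$ by the sequence $(P(a_N))_N$, whose ultraproduct norm equals $\lim_{N\to\omega}\|P(a_N)\|$. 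Since $\omega$ was an arbitrary free ultrafilter, every subsequential limit of $(\|P(a_N)\|)_N$ equals $\|P(a_0)\|$, so the full sequence converges as claimed. The main technical point --- the step I expect to cost the most care in a full write-up --- is justifying the matrix/ultraproduct interchange $M_n(\prod_\omega\mathcal A_N)\cong\prod_\omega M_n(\mathcal A_N)$, which ultimately rests on the elementary equivalence, with constants depending only on $n$, between the $C^*$-norm of a matrix in $M_n(\mathcal A)$ and the maximum of the $C^*$-norms of its entries.
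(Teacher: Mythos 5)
Your proof is correct, and it is genuinely different from the source the paper relies on: the paper does not prove Proposition \ref{Camille} at all but quotes it from \cite[Proposition 7.3]{M}, where the argument is the elementary ``power trick'': writing $\|P(a_N)\|^{2m}=\|\bigl((P^*P)^m\bigr)(a_N)\|$, sandwiching the norm of a matrix $B\in M_n(\mathcal A)$ between $\max_{i,j}\|B_{ij}\|$ and $\sum_{i,j}\|B_{ij}\|$, applying the scalar strong convergence to the entries $[(P^*P)^m]_{ij}$, and letting $m\to\infty$ to kill the factor $n^{\pm 1/m}$. Your route instead packages the whole sequence into a norm ultraproduct, uses the scalar hypothesis to build the isometric $*$-isomorphism $\pi\colon C^*(1,a_0)\to C^*(1,a_\omega)$, and then gets the matrix statement for free from functoriality of $M_n(\cdot)$ (amplifications of injective $*$-homomorphisms are isometric) together with the identification $M_n(\prod_\omega\mathcal A_N)\cong\prod_\omega M_n(\mathcal A_N)$, which you correctly identify as the only point needing care and which indeed follows from the equivalence, with $n$-dependent constants, of the $C^*$-norm and the maximum entry norm. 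The comparison: the direct argument is quantitative, ultrafilter-free and entirely sequence-level, which is why it is the standard reference proof; your argument is structurally cleaner (no $\varepsilon$--$m$ bookkeeping), makes transparent that only the norm half of ``strong convergence in distribution'' is used --- neither faithfulness of the $\tau_N$ nor the distributional convergence enters --- and generalizes verbatim to other matrix amplifications. Two small points to spell out in a full write-up: the well-definedness of $p(a_0)\mapsto p(a_\omega)$ (it is part of the isometry statement, since $p(a_0)=q(a_0)$ forces $\|(p-q)(a_\omega)\|=\|(p-q)(a_0)\|=0$), and the boundedness of $(\|P(a_N)\|)_N$ when converting ``all ultralimits agree'' into convergence of the full sequence; both are immediate from the hypotheses.
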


A special case of strong convergence in distribution arises from 
the consideration of random matrices in $M_N(\mathbb C)$. 
The following result follows from \cite[Theorem 1.4]{CM} and \cite[Theorem 1.2]{BC}.

\begin{thm}\label{stronguni}
Let $(\mathcal A_N,\tau_N)$ denote the space 
$(M_N(\mathbb C),\rm tr_N)$,
$N\in\mathbb N$. Suppose that $k_1,k_2, k_3\in\mathbb N$ are fixed,
$u_N=(U_{N,1},\dots,U_{N,k_1})$,  $x_N=(X_{N,1},\dots,X_{N,k_2})$ and 
$a_N=(A_{N,1},\dots,A_{N,k_3})$ are mutually independent random 
tuples 
{of matrices} in some classical probability space such that:
\begin{itemize}
\item[(i)]  $U_{N,1},\dots,
U_{N,k_1}$ are independent unitaries distributed according to 
the Haar measure on the unitary group ${\rm U}(N),N\in
\mathbb N$.
\item[(ii)]  $X_{N,1},\dots,
X_{N,k_2}$ are independent Hermitian matrices, each 
satisfying assumptions $(X1), (X2),$ and $(X3)$
 in the introduction.
\item[(iii)]  $a_N$ is a vector of $N\times N$ selfadjoint matrices 
such that the sequence $\{a_N\}_{N=1}^\infty$ converges 
strongly almost surely in distribution to some deterministic $k_3$-tuple
in a $C^*$-probability space.
\end{itemize}
Then there exist a $C^*$-probability space $(\mathcal A,\tau)$, a free family  
$u=(u_1,\dots,u_{k_1})\in\mathcal A^{k_1}$ of Haar unitaries,  
a semicircular system $ x=(x_1,\dots,x_{k_2})\in\mathcal A^{k_2}$ and 
$a=(a_1,\dots,a_{k_3})\in\mathcal A^{k_3}$, such that, $u,x,$ and $a$ are free and 
$\{(u_N,x_N,a_N)\}_{N=1}^\infty$  converges strongly almost surely in distribution to $(u, x, a)$.
\end{thm}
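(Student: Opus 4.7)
The plan is a two-step argument combining the two cited theorems.

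First, I would apply \cite[Theorem 1.2]{BC} to the pair $(x_N, a_N)$. Since $x_N=(X_{N,1},\dots,X_{N,k_2})$ consists of independent Wigner matrices satisfying (X1)--(X3), and $a_N$ is a selfadjoint tuple, independent of $x_N$, that converges strongly almost surely in distribution to a deterministic limit $a$, that result produces a tracial $C^*$-probability space containing a semicircular system $x=(x_1,\dots,x_{k_2})$ and a copy of $a$, such that $x$ is free from $a$ and $\{(x_N,a_N)\}$ converges strongly almost surely in distribution to $(x,a)$.

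Second, I would bring the Haar unitaries $u_N$ into the picture via \cite[Theorem 1.4]{CM}. On the almost sure event $\Omega_0$ where Step~1 holds, the tuple $(x_N,a_N)$ is, sample-pathwise, a deterministic sequence of selfadjoint matrix tuples converging strongly in distribution to $(x,a)$. By the independence hypothesis, conditional on the $\sigma$-algebra generated by $\{(x_N,a_N)\}_{N\ge 1}$, the family $u_N$ is still a tuple of independent Haar unitaries on ${\rm U}(N)$. Thus \cite[Theorem 1.4]{CM} applies pathwise to the pair (Haar unitaries, deterministic strongly convergent tuple) and yields, almost surely, strong joint convergence of $(u_N,x_N,a_N)$ to $(u,x,a)$, where $u=(u_1,\dots,u_{k_1})$ is a free family of Haar unitaries that is moreover free from $(x,a)$. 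Combined with the freeness of $x$ from $a$ established in Step~1, this produces the three-way freeness of $u$, $x$, and $a$ required by the conclusion.

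The main technical point, and the only real obstacle, is that \cite[Theorem 1.4]{CM} is classically stated for a \emph{deterministic} tuple, while in our situation $(x_N,a_N)$ is random. The passage to the random setting is handled by a standard Fubini argument: for each sample path $\omega\in\Omega_0$, applying \cite[Theorem 1.4]{CM} to the frozen realization $(x_N(\omega),a_N(\omega))$ shows that the set of realizations of $u_N$ for which strong convergence of $(u_N,x_N(\omega),a_N(\omega))$ fails has probability zero; integrating over $\omega$ then gives that the full event of joint strong convergence has probability one. The ambient limit space $(\mathcal A,\tau)$ can be realized as the reduced free product of the $C^*$-algebra generated by the free Haar unitaries $u$, that generated by the free semicircular system $x$, and that generated by $a$, in which $u$, $x$, and $a$ are free by construction.
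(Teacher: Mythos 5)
Your proposal is correct and follows essentially the same route as the paper, which simply cites \cite[Theorem 1.2]{BC} for the Wigner part and \cite[Theorem 1.4]{CM} for the Haar unitaries, with the passage from deterministic to random tuples handled exactly by the conditioning/Fubini argument you describe (this is assertion 2 in \cite[Section 3]{M}, which the paper invokes). The only small point to add is that the same reduction is already needed in your Step 1, since \cite[Theorem 1.2]{BC} is likewise stated for deterministic $a_N$, but the conditioning argument you spell out for Step 2 applies verbatim there.
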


We recall that  a tuple $(x_1,\dots,x_{k})$ of elements in a ${C}^*$-probability 
space  $\left({\mathcal A}, \tau\right)$ is
{called}
a {\em semicircular system} if
$\{x_1,\dots,x_{k}\}$ is a free family of selfadjoint random variables,
and for every $i=1,\ldots,k$,   $\mu_{x_i}$ 
is the standard semicircular
distribution $\nu_{0,1}$ defined by
\begin{equation}\label{demicercle}
{\rm d}\nu_{0,1}(t)=
\frac{1}{2\pi} \sqrt{4-t^2}{1\!\!{\sf I}}_{[-2,2]}(t)\,\mathrm{d}t.
\end{equation}  
An element $u\in\mathcal A$ is
called a {\em Haar unitary} if $u^*=u^{-1}$ and 
$\tau(u^n)=0$ for all $n\in\mathbb Z\setminus\{0\}$.
Note that Theorem 1.2 in \cite{BC} deals with deterministic  
$a_N$ but the random case readily follows as pointed out by assertion 2 in \cite[Section 3]{M}. 
The point of Theorem \ref{stronguni} is, of course, that the resulting convergence is strong.
Convergence in distribution was established earlier  
(see \cite{V-Invent}, \cite{D}, \cite[Theorem 5.4.5]{AGZ}).

We also need a simple coupling result from \cite[Lemma 5.1]{CM}.
\begin{lemma}\label{fortdiag}
Suppose given selfadjoint  matrices $C_N,D_N\in M_N(\mathbb C)$, $N\in\mathbb N$, 
such that the sequences $\{C_N\}_{N\in\mathbb N}$ and $\{D_N\}_{N\in\mathbb N}$ 
converge strongly in distribution.
Then there exist diagonal matrices 
$\widetilde{C}_N,\widetilde{D}_N\in M_N(\mathbb C)$, $N\ge1$, 
such that $\mu_{\widetilde{C}_N}=\mu_{C_N}$, $\mu_{\widetilde{D}_N}=\mu_{D_N}$, 
 and the sequence 
$\{(\widetilde{C}_N,\widetilde{D}_N)\}_{N\in\mathbb N}$ converges 
strongly in distribution.
\end{lemma}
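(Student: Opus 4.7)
The plan is to define $\widetilde C_N$ and $\widetilde D_N$ as diagonal matrices whose diagonals are specific permutations of the eigenvalues of $C_N$ and $D_N$, chosen so that the joint empirical distribution on $\mathbb R^2$ converges strongly to the \emph{product} $\mu\otimes\nu$, where $\mu=\lim_N\mu_{C_N}$ and $\nu=\lim_N\mu_{D_N}$. The limit pair $(\widetilde C,\widetilde D)$ is realized as multiplication by the two coordinate functions on $L^\infty(\mathbb R^2,\mu\otimes\nu)$ with integration state. Since $\operatorname{supp}(\mu\otimes\nu)=\operatorname{supp}(\mu)\times\operatorname{supp}(\nu)$ and the pair commutes, every noncommutative polynomial $P$ reduces to a commutative one, and the target norm is $\|P(\widetilde C,\widetilde D)\|=\sup\{|P(x,y)|:x\in\operatorname{supp}\mu,\,y\in\operatorname{supp}\nu\}$.

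Concretely, write $N=K_NL_N+r_N$ with $K_N,L_N\to\infty$ (for instance $K_N=\lfloor\sqrt N\rfloor$) and index the first $K_NL_N$ diagonal positions by pairs $(k,l)\in\{1,\dots,K_N\}\times\{1,\dots,L_N\}$, setting
\[
(\widetilde C_N)_{(k,l),(k,l)}=\lambda_{(k-1)L_N+l}(C_N),\qquad
(\widetilde D_N)_{(k,l),(k,l)}=\lambda_{(l-1)K_N+k}(D_N),
\]
with eigenvalues listed nondecreasingly; the $r_N$ remaining positions are padded so that every eigenvalue of $C_N$ and of $D_N$ is used exactly once. This gives $\mu_{\widetilde C_N}=\mu_{C_N}$ and $\mu_{\widetilde D_N}=\mu_{D_N}$ by construction, and the two diagonals are asymptotically decoupled: $(\widetilde C_N)_{(k,l),(k,l)}$ depends essentially on the outer index $k$, while $(\widetilde D_N)_{(k,l),(k,l)}$ depends on the outer index $l$.

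Joint weak convergence follows from a Riemann-sum computation: expressing the diagonal entries as quantiles of $\mu_{C_N}$ and $\mu_{D_N}$ and using pointwise a.e.\ convergence of the quantile functions (a consequence of weak marginal convergence), one checks that $\tr_N(\widetilde C_N^{a}\widetilde D_N^{b})$ factors up to $o(1)$ into a product of Riemann sums and converges to $\int x^a\,d\mu\cdot\int y^b\,d\nu$. Norm convergence is then proved by matching upper and lower bounds: on the one hand each diagonal pair sits in $\operatorname{supp}\mu_{C_N}\times\operatorname{supp}\mu_{D_N}$, which by the individual strong convergences lies in any $\varepsilon$-neighbourhood of $\operatorname{supp}\mu\times\operatorname{supp}\nu$; on the other hand, for any target $(x,y)=(F_\mu^{-1}(s),F_\nu^{-1}(t))$ in that product support, taking $k=\lceil sK_N\rceil$ and $l=\lceil tL_N\rceil$ produces a diagonal pair converging to $(x,y)$, the product index structure being exactly what forces the $C_N$- and $D_N$-quantiles to converge independently. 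The main obstacle — and the reason the naive ``both-increasing'' ordering fails — is that the latter concentrates the joint measure on the monotone curve $\{(F_\mu^{-1}(t),F_\nu^{-1}(t))\}$, so that tiny misalignments between the orderings of $C_N$ and $D_N$ leave joint-support points off that curve and destroy Hausdorff support convergence; the product-shuffle with $K_N,L_N\to\infty$ is precisely what decouples the two orderings and makes the joint support fill the full rectangle $\operatorname{supp}\mu\times\operatorname{supp}\nu$.
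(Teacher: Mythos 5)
Your construction is correct, and it cannot be compared line by line with the paper because the paper offers no proof of this lemma: it is quoted from \cite{CM} (Lemma 5.1) as a known coupling result. Your argument is a legitimate self-contained substitute, and its key decision is the right one: aiming at the \emph{product} coupling, so that the limiting joint support is the full rectangle $\mathrm{supp}\,\mu\times\mathrm{supp}\,\nu$, is exactly what makes the statement robust when the supports have gaps, whereas the naive both-nondecreasing ordering can indeed fail Hausdorff convergence of the joint support (a single misaligned rank across a spectral gap produces a stray pair far from the monotone-coupling support, so norms of polynomials such as $(X_1-X_2)^2$ do not converge to the right value). Two points are worth tightening when you write this up. First, not every point of $\mathrm{supp}\,\mu\times\mathrm{supp}\,\nu$ is of the form $(F_\mu^{-1}(s),F_\nu^{-1}(t))$ with $s,t$ continuity points of the quantile functions; but such points are dense in the rectangle, and compactness converts the pointwise approximation you prove into the uniform (Hausdorff) statement needed for $\max_j|\widehat P(c_j,d_j)|\to\sup_{\mathrm{supp}\,\mu\times\mathrm{supp}\,\nu}|\widehat P|$, where $\widehat P$ is the abelianization of $P$ (the inclusion of the joint support in an $\varepsilon$-neighborhood of the rectangle, coming from the individual strong convergences, gives the matching upper bound). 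Alternatively, for $x\in\mathrm{supp}\,\mu$ one can argue that $\mu_{C_N}$ eventually puts mass $\ge\delta$ on $(x-\varepsilon,x+\varepsilon)$, so the corresponding ranks fill at least one complete $C$-block, and similarly for $y$ and the $D$-blocks, producing a grid position $(k,l)$ whose pair is $\varepsilon$-close to $(x,y)$. Second, the mixed-moment convergence does not really need a.e.\ convergence of quantile functions: since each block consists of consecutive order statistics, the sum over $k$ of the oscillation of the eigenvalues within block $k$ telescopes to at most the diameter of the spectrum, so replacing $\lambda_{(k-1)L_N+l}(C_N)^a$ by its block average (and likewise for $D_N$) costs $O(1/K_N)+O(1/L_N)$, and the decoupled sum factors exactly into the two empirical moments, up to the $O(r_N/N)$ padding term. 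With these routine details supplied, your proof is complete.
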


\section{Description of the models}\label{model}

In order to describe in 
detail our  matrix models, we need two compactly supported
probability measures $\mu$ and $\nu$ on $\mathbb R$, a positive 
integer $p$, and a sequence of fixed real numbers 
$\theta_1\ge\theta_2\ge\cdots\ge\theta_p$ in $\mathbb R\setminus\text{supp}(\mu)$. 
The matrix $A_N\in M_N(\mathbb C)$ is random selfadjoint
 for all $N\in\mathbb N,
N\ge1$ and satisfies the following conditions:
\begin{itemize}
\item[(A1)] almost surely,  the sequence  $\{A_N\}_{N=1}^\infty$ converges in distribution  to $\mu$,
\item[(A2)] $\theta_1\ge\theta_2\ge\cdots\ge\theta_p$ are $p$ eigenvalues of $A_N$, and
\item[(A3)] the other eigenvalues of $A_N$, which may be random, 
converge uniformly almost surely to ${\rm supp}(\mu)$:
almost surely, for every $\varepsilon>0$ there exists $N(\varepsilon)\in\mathbb N$ 
such that
$$
\sigma(A_N)\setminus\{\theta_1,\dots,\theta_p\}\subseteq\mathrm{supp}(\mu)+(-\varepsilon, 
\varepsilon),\quad N\ge N(\varepsilon).
$$ 
In other words, only the  $p$ eigenvalues $\theta_1,\ldots,\theta_p$ prevent 
$\{A_N\}_{N=1}^\infty$ from converging strongly
in distribution to $\mu$.
\end{itemize}
We 
investigate two polynomial matricial models, both involving $A_N$. The first model  involves
a sequence $\{B_N\}_{N=1}^\infty$ of random Hermitian matrices such that 
\begin{itemize}
\item[(B0)] $B_N$ is independent from $A_N$,	
\item[(B1)] 
$B_N$  converges strongly in 
distribution to the compactly supported
probability measure $\nu$ on $\mathbb R$, 
\item[(B2)] for each $N$, the distribution of $B_N$ is invariant under conjugation 
by arbitrary $N\times N$ unitary matrices. 
\end{itemize}
The matricial model is
\begin{equation}\label{uni}
Z_N=P(A_N,B_N)
\end{equation} 
for an arbitrary selfadjoint polynomial $P\in\mathbb C\langle X_1,
X_2\rangle$.

The second model deals with 
 $N\times N$ random Hermitian Wigner matrices $X_N=[{X}_{ij}]_{i,j=1}^N$, where
$[X_{ij}]_{i\geq1,j\geq 1}$ is an infinite array of
random variables satifying conditions $(X0)-(X3)$ in the introduction.
The matricial model is
\begin{equation}\label{wig}
Z_N=P\left(A_N,\frac{X_N}{\sqrt{N}}\right)
\end{equation} 
for an arbitrary selfadjoint polynomial $P\in\mathbb C\langle X_1,X_2\rangle$.

In the discussion of the first model, we use results 
of Voiculescu \cite{V-Invent} (see also \cite{VDN}), 
who showed that 
there exist a free pair $(a,b)$ of 
selfadjoint elements 
 in a II${}_1$-factor $(\mathcal A,\tau)$ such that, almost surely,  the 
sequence $\{(A_N,B_N)\}_{N=1
}^\infty$ converges in distribution to $(a,b)$. Thus,
$\mu=\mu_a,\nu=\mu_b$, and  the sequence $\{P(A_N,B_N)\}_{N=1}^\infty$ converges 
in distribution 
to $P(a,b)$ (that is, $$
\lim_{N\to\infty}\mu_{P(A_N,B_N)}=\mu_{P(a,b)}
$$ in the weak${}^*$ topology) for every
 selfadjoint polynomial  
$P\in \mathbb C\langle X_1,
X_2\rangle$.  When $p=0$, Lemma \ref{fortdiag}, Theorem \ref{stronguni} 
and Remark \ref{rem2.1}, show that, almost surely, this convergence is strong (see the proof of Corollary 2.2 in \cite{CM}).

For the second model we use  \cite[Proposition 2.2]{BC} and \cite[Theorem 5.4.5]{AGZ}, where it is seen that for every selfadjoint polynomial 
$P\in \mathbb C\langle X_1,X_2\rangle$ we have
$$
\lim_{N\to\infty}\mu_{ P(A_N, {X_N}/{\sqrt{N}})}=\mu_{P(a,b)}
$$
almost surely in the weak${}^*$ topology, where
$a$ and $b$ are freely independent selfadjoint noncommutative random variables,
$\mu_a=\mu$, and $\mu_b=\nu_{0,1}$. 
As in the first model,  Theorem \ref{stronguni} 
and Remark \ref{rem2.1} show that, almost surely, 
the sequence $\{ P(A_N, {X_N}/{\sqrt{N}})\}_{N=1}^\infty$ converges 
strongly in distribution
to $P(a,x)$ provided that $p=0$.

Our main result applies, of course, to the case in which $p>0$. Let $Y_N$ be either 
$B_N$ or ${X_N}/{\sqrt{N}}$. 
The set of outliers of $P(A_N,Y_N)$ is calculated from the spikes 
$\theta_1,\dots,\theta_p$ using Voiculescu's
matrix subordination function \cite{V2000}. 
When $Y_N=B_N$, we also show that the eigenvectors 
associated to these outlying
eigenvalues have projections of computable size onto the 
eigenspaces of $A_N$ corresponding to the spikes. The precise statements are
 Theorems \ref{Main} and  \ref{thm8.2}.
 Sections \ref{sec:linearization} and
 \ref{sec:subordination} contain the necessary tools from 
operator-valued noncommutative probability theory while 
Sections \ref{sp}--\ref{cvFw} are dedicated to the proofs of the main results.

\section{Linearization}\label{sec:linearization}
As in \cite{Anderson,BMS}, we use   linearization to reduce a problem about
a polynomial in freely independent, or asymptotically freely 
independent, random variables, to a problem about the 
addition of matrices having these random variables as entries. Suppose that
 $P\in\mathbb C\langle X_1,\dots,X_k\rangle$. 
For our purposes, a \emph{linearization} of $P$ is a linear 
polynomial of the form
$$
z\alpha\otimes 1 -L,
$$
where $z$ is a complex variable, and
$$L=\gamma_0\otimes1+\gamma_1\otimes X_1
+\cdots+\gamma_k\otimes X_k,$$
with $\alpha,\gamma_0,\dots,\gamma_k\in M_n(\mathbb C)$
for some $n\in\mathbb N$, and the following property is satisfied: 
\emph{given $z\in\mathbb C$ and elements $a_1,\dots,a_k$ in 
a $C^*$-algebra $\mathcal A$, $z-P(a_1,\dots,a_k)$ is invertible 
in $\mathcal A$ if and only if $z\alpha\otimes 1-L(a_1,\dots,a_k)$ is 
invertible in $M_n(\mathcal A)$}. Usually, this is achieved by 
ensuring that $(z\alpha\otimes 1-L)^{-1}$ exists as an element of 
$M_n(\mathbb C\langle X_1,\dots,X_k\rangle\langle(z-P)^{-1}\rangle)$ and $(z-P)^{-1}$ 
is one of the entries of the $(z\alpha\otimes 1-L)^{-1}$.
It is known (see, for instance, \cite{MPS}) that every polynomial has
a linearization.
{See \cite{HT} for earlier uses of linearization in free probability.} 

In the following 
we also say, more concisely, that $L$ is a linearization of $P$.  
We also suppress the unit of the algebra 
$\mathcal A$ when there is no risk of confusion.  
For instance, we may write $z\alpha -L$ in place of
$z\alpha\otimes 1 -L.$

We describe in some detail a linearization procedure from \cite{Anderson} (see also \cite{Mai}) 
that has several advantages. In this procedure, we always have $\alpha=e_{1,1}$, where
 $e_{1,1}$ denotes the matrix whose only nonzero entry equals $1$
and occurs in the first row and first column.
Given  $P\in\mathbb C\langle X_1,\dots,X_k\rangle$, 
we produce  an integer $n\in\mathbb N$ and a linear polynomial 
$L\in M_n(\mathbb C\langle X_1,\dots,X_k\rangle)$ of the form
$$
L=\begin{bmatrix}
0 & u\\
v & Q
\end{bmatrix},
$$
such that $u\in M_{1\times (n-1)}(\mathbb C\langle X_1,\dots,X_k\rangle)$, 
$v\in M_{(n-1)\times 1}(\mathbb C\langle X_1\dots,X_k\rangle)$, 
 $Q$ is an invertible matrix in 
 $M_{n-1}(\mathbb C\langle X_1,\dots X_k\rangle)$ whose 
 inverse is a polynomial of degree less than or equal to the degree of $P$,
 and $uQ^{-1}v=-P$.
Moreover, if $P=P^*$, the coefficients of $L$ can be chosen to be 
selfadjoint matrices in $M_n(\mathbb C)$.

The construction proceeds by induction on the number of monomials in the given polynomial.  If $P$ is a monomial of degree $0$ or $1$, we set $n=1$ and $L=P$.
If $P=X_{i_1}X_{i_2}X_{i_3}\cdots X_{i_{\ell-1}}X_{i_\ell}$, where
$\ell\ge2$ and $i_1,\dots,i_\ell\in\{1,\dots,k\}$, we set $n=\ell$ and
$$
L=-\begin{bmatrix}
0 & 0 & \cdots &  0 & X_{i_1}\\
0 & 0 & \cdots &  X_{i_2} & -1\\
\vdots&\vdots& \reflectbox{$\ddots$}&\vdots&\vdots\\
0 & X_{i_{\ell-1}}&\cdots&0&0\\
X_{i_\ell}&-1&\cdots&0&0
\end{bmatrix}.
$$
As noted in  \cite{Mai},
the lower right $(\ell-1)\times(\ell-1)$ corner of this matrix has an inverse of degree $\ell-2$ in
 the algebra
$M_{\ell-1}(\mathbb C\langle X_1,\dots,X_k\rangle)$. 
(The constant term in this inverse is a selfadjoint matrix and 
its spectrum 
is contained in
$\{-1,1\}$.) Suppose now that $p=P_1+P_2$, where $P_1,P_2\in \mathbb C\langle X_1,\dots,X_k\rangle$, 
and 
{that}
linear polynomials
$$
L_{j}=\begin{bmatrix}
0 & u_j\\
v_j & Q_j
\end{bmatrix}\in M_{n_j}(\mathbb C\langle X_1,\dots,X_k\rangle),\quad j=1,2,
$$ 
with the desired properties have been found for $P_1$ and $P_2$.  Then we set $n=n_1 +n_2 -1$ and observe that the matrix 
$$L=
\begin{bmatrix}
0 & u_1 & u_2\\
v_1 & Q_1 & 0\\
v_2 & 0 &Q_2
\end{bmatrix}=
\begin{bmatrix}
0 & u\\
v & Q
\end{bmatrix}
\in M_{n_1+n_2-1}(\mathbb C\langle X_1,\dots X_k\rangle).
$$
is a linearization of $P_1+P_2$ with the desired properties. 
The construction of a linearization is now easily 
 completed for an arbitrary polynomial. 
 Suppose now that $P$ is a selfadjoint polynomial, so  $P=P_0+P_0^*$ for some 
other polynomial $P_0$. Suppose that the matrix 
$$
\begin{bmatrix}
0 & u_0\\
v_0 & Q_0
\end{bmatrix}.
$$  
of size $n_0$ is a linearization of $P_0$. Then we set $n=2n_0 -1$ and observe that the selfadjoint linear polynomial
$$
\begin{bmatrix}
0 & u_0 & v_0^*\\
u_0^* & 0 & Q_0^* \\
v_0 & Q_0 & 0
\end{bmatrix}=\begin{bmatrix}
0 & u\\
u^* & Q
\end{bmatrix}
$$
linearizes $P$. It is easy to verify inductively that this 
construction produces a matrix $Q$ such that the constant term of $Q^{-1}$ has 
spectrum contained in $\{1,-1\}$. 
These properties of $Q$  \cite{Mai}, and particularly the following observation,
facilitate our analysis.

\begin{lemma}\label{Q invers} Let $P\in \mathbb C\langle X_1,\dots,X_k\rangle$, 
and let 
$$
L=\begin{bmatrix}
0 & u\\
v & Q
\end{bmatrix}\in M_n(\mathbb C\langle X_1,\dots,X_k\rangle)
$$
be a linearization of $P$ as constructed above. There exist a permutation matrix
$T\in M_{n-1}$ and a strictly lower triangular matrix $N\in M_{n-1}(\mathbb C\langle X_1,\dots,X_k\rangle)$ such that
$Q^{-1}=T(1_{n-1}+N)$.
\end{lemma}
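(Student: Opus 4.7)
My plan is to prove Lemma \ref{Q invers} by induction on the recursive construction of $L$ described just above the statement, but with a strengthened induction hypothesis. At every step I will show that the constant-term matrix $Q(0)$ is a (signless) permutation, and that the remainder $R := Q - Q(0)$, whose entries lie in the polynomial ideal of elements without constant term, satisfies both
\[
Q(0)^{-1}R \text{ is strictly lower triangular}\quad\text{and}\quad R\,Q(0)^{-1} \text{ is strictly upper triangular.}
\]
Granted these facts, one writes $Q = Q(0)\bigl(1 + Q(0)^{-1}R\bigr)$; because $Q(0)^{-1}R$ is strictly lower triangular (hence nilpotent), the factor $1 + Q(0)^{-1}R$ is unit lower triangular and its inverse has the form $1+N$ with $N$ strictly lower triangular and polynomial. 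This yields $Q^{-1} = (1_{n-1}+N)\,T$ with $T := Q(0)^{-1}$ a permutation matrix, which is the factorization asserted by the lemma (up to writing $T$ on the left or right, a matter of re-indexing that does not affect existence).

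The base case is a monomial $P = X_{i_1}\cdots X_{i_\ell}$ with $\ell \ge 2$. Reading off the explicit matrix from the construction, $Q(0)$ is the reversal permutation $J_{\ell-1}$, and $R$ has nonzero entries $-X_{i_{r+1}}$ only at the positions $(r,\ell-r-1)$, one step below the antidiagonal. A short direct calculation then shows that $J_{\ell-1}R$ has its only nonzero entries on the first subdiagonal and $RJ_{\ell-1}$ has its only nonzero entries on the first superdiagonal, verifying both strengthened properties. For the sum step $P = P_1+P_2$, the new $Q$ is block diagonal in the inductive pieces $Q_1$ and $Q_2$, and both strengthened properties pass block by block to the direct sum.

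The main obstacle is the symmetrization step $P = P_0+P_0^*$. Here the new matrix is $Q = \begin{bmatrix} 0 & Q_0^* \\ Q_0 & 0 \end{bmatrix}$, so $Q(0) = \begin{bmatrix} 0 & Q_0(0)^t \\ Q_0(0) & 0 \end{bmatrix}$ and $R = \begin{bmatrix} 0 & R_0^* \\ R_0 & 0 \end{bmatrix}$, where $R_0 := Q_0 - Q_0(0)$. A direct block computation gives
\[
Q(0)^{-1}R = \begin{bmatrix} Q_0(0)^t R_0 & 0 \\ 0 & Q_0(0)R_0^* \end{bmatrix},\qquad R\,Q(0)^{-1} = \begin{bmatrix} R_0^* Q_0(0) & 0 \\ 0 & R_0 Q_0(0)^t \end{bmatrix}.
\]
The diagonal blocks $Q_0(0)^t R_0$ and $R_0 Q_0(0)^t$ have the required triangularities by the inductive hypothesis. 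The remaining two blocks $Q_0(0)R_0^* = (R_0 Q_0(0)^t)^*$ and $R_0^* Q_0(0) = (Q_0(0)^t R_0)^*$ are the Hermitian adjoints of these and therefore inherit the opposite triangularity, since the adjoint of a strictly upper triangular matrix is strictly lower triangular and vice versa. This closes the induction. The essential insight making the proof go through is carrying the strictly-upper-triangular side condition through the induction alongside the strictly-lower one, because only the symmetrization step forces one to control an adjoint.
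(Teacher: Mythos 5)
Your argument is correct in substance and follows essentially the same route as the paper: induction over the three steps of the construction, with the permutation part played by the constant term $Q(0)$ (the paper's $T_0$) and with the symmetrization step $P=P_0+P_0^*$ handled by carrying the factorization on both sides so that the adjoint can be absorbed --- which is exactly what the paper compresses into the remark that ``different factorizations must be used for $Q_0$ and $Q_0^*$.'' Your base case, sum step, and symmetrization computations are all correct.

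The one point to repair is your closing remark. What you actually prove is $Q^{-1}=(1_{n-1}+N)T$ (equivalently, using your other side condition, $Q^{-1}=T(1_{n-1}+N')$ with $N'$ strictly \emph{upper} triangular), and moving the permutation to the other side is \emph{not} ``a matter of re-indexing'': conjugation by $T$ does not preserve strict lower triangularity. Concretely, for $P=X_{i_1}X_{i_2}X_{i_3}$ the construction gives $Q^{-1}=\begin{bmatrix}0&1\\1&X_{i_2}\end{bmatrix}$, which is a unit lower triangular matrix times a permutation, but cannot be written as $T(1_{n-1}+N)$ with $N$ strictly lower triangular for either of the two permutations in $M_2$. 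This discrepancy is not a defect of your induction but of the literal wording in the paper, whose own proof writes $Q=(1_{n-1}-N_0)T_0$ where the displayed monomial matrix in fact satisfies $Q=T_0(1_{n-1}-N_0)$; and it is immaterial for the only use of the lemma (in Lemma \ref{large |z|}), where one just needs that the entrywise conditional expectation of $Q^{-1}$ is a permutation times a unitriangular matrix, in whichever order. So simply state the factorization you actually obtain instead of appealing to re-indexing, and your proof is complete.
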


\begin{proof} We show that there exist
	a permutation matrix
	$T_0\in M_{n-1}$ and a strictly lower triangular matrix
	permutation matrices $N_0\in M_{n-1}(\mathbb C\langle X_1,\dots,X_k\rangle)$ such that
	$Q=(1_{n-1}-N_0)T_0$. Then we can define $T=T_0^{-1} $ and $N=\sum_{j=1} ^{n-2}N_0^j$. 
The existence of $T_0$ and $N_0$ is proved by following inductively the construction of $L$. If 
$P=X_{i_1}\cdots X_{i_\ell}$, $\ell\ge2$, we define
$$T_0=\begin{bmatrix}
0 & \cdots  & 1\\
\vdots& \reflectbox{$\ddots$}&\vdots\\
1 & \cdots&0
\end{bmatrix}, $$ 
and let the only nonzero entries of $N_0$ be $X_{i_2},\dots,X_{i_\ell}$ just below the main diagonal. If 
$P=P_1+P_2$, and linearizations for $P_1$ and $P_2$ have been found, then the desired matrices are 
obtained simply by taking direct sums of the matrices corresponding to $P_1$ and $P_2$.  The case in 
which $P=P_0 +P_0^*$ is treated similarly (different factorizations must be used for $Q_0$ and $Q_0^*$).
\end{proof}

\begin{lemma}\label{o lema}
Suppose that $P\in \mathbb C\langle X_1,\dots,X_k\rangle$, and let 
$$
L=\begin{bmatrix}
0 & u\\
v & Q
\end{bmatrix}\in M_n(\mathbb C\langle X_1,\dots,X_k\rangle)
$$
be a linearization of $P$ with the properties outlined above. 
Then for every $N\in\mathbb N$, and for every $S_1,\dots,S_k\in M_N(\mathbb C)$, 
we have
$$
\det (ze_{1,1}\otimes I_N-L(S_1,\dots,S_k))= \pm \det (zI_n-P(S_1,\dots,S_k)),
$$
where the sign is $\det(Q(S_1,\dots,S_k))$. Moreover,
$$\dim\ker(zI_n-P(S_1,\dots,S_k))=\dim\ker(ze_{1,1}\otimes I_N-L(S_1,\dots,S_k))
\quad z\in\mathbb C.$$
\end{lemma}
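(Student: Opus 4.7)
The proof is a straightforward block Schur complement / LDU factorization, made possible by the special feature of the Anderson linearization that $Q$ is invertible already in $M_{n-1}(\mathbb{C}\langle X_1,\dots,X_k\rangle)$, with $Q^{-1}$ a genuine polynomial.

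First I would note that since $Q^{-1}$ exists as a polynomial (this is built into the construction, and the precise form is recorded in Lemma \ref{Q invers}), the evaluation $Q(S_1,\dots,S_k)\in M_{(n-1)N}(\mathbb{C})$ is invertible for \emph{every} choice of $S_1,\dots,S_k$, its inverse being $Q^{-1}(S_1,\dots,S_k)$. Writing $S=(S_1,\dots,S_k)$ for brevity, the block structure of $L$ gives
$$
ze_{1,1}\otimes I_N - L(S) = \begin{bmatrix} zI_N & -u(S) \\ -v(S) & -Q(S) \end{bmatrix}.
$$
The defining identity $uQ^{-1}v=-P$ then produces the block factorization
$$
\begin{bmatrix} zI_N & -u(S) \\ -v(S) & -Q(S) \end{bmatrix}
=\begin{bmatrix} I_N & u(S)Q(S)^{-1} \\ 0 & I_{(n-1)N} \end{bmatrix}
\begin{bmatrix} zI_N-P(S) & 0 \\ 0 & -Q(S) \end{bmatrix}
\begin{bmatrix} I_N & 0 \\ Q(S)^{-1}v(S) & I_{(n-1)N} \end{bmatrix},
$$
as can be checked by multiplying out and invoking $u(S)Q(S)^{-1}v(S)=-P(S)$.

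From this factorization both conclusions drop out. The two outer triangular factors have determinant $1$, so taking determinants gives
$$
\det\bigl(ze_{1,1}\otimes I_N - L(S)\bigr)=\det\bigl(-Q(S)\bigr)\,\det\bigl(zI_N-P(S)\bigr),
$$
and $\det(-Q(S))=(-1)^{(n-1)N}\det(Q(S))$ differs from $\det(Q(S))$ by an overall sign, yielding the first assertion. For the kernel statement, the outer triangular matrices are invertible on $\mathbb{C}^{nN}$, so multiplication by them is a bijection, and hence
$$
\dim\ker\bigl(ze_{1,1}\otimes I_N - L(S)\bigr)=\dim\ker(zI_N-P(S))+\dim\ker(-Q(S)).
$$
Since $Q(S)$ is invertible, the last summand vanishes, giving the stated equality.

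There is no real obstacle here; the only non-formal ingredient is the polynomial invertibility of $Q$, which is the whole point of using the Anderson linearization rather than an arbitrary one. Once that is in hand, the argument is the familiar Schur complement identity applied blockwise at size $nN$, and it works uniformly in $z\in\mathbb{C}$ and in the matrices $S_1,\dots,S_k$.
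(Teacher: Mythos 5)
Your proof is correct and is essentially the paper's own argument: the same Schur-complement factorization of $ze_{1,1}\otimes I_N-L(S)$ into unit block-triangular factors times $\operatorname{diag}\bigl(zI_N-P(S),\,-Q(S)\bigr)$, with the determinant identity and the kernel equality read off from that factorization exactly as in the paper. The only point worth making explicit is that Lemma~\ref{Q invers} yields $\det Q(S_1,\dots,S_k)=\pm1$ (not merely $\neq0$), which is what lets you interpret the prefactor $\det(-Q(S))$ as an overall sign $\pm1$ rather than just a nonzero constant depending on $S$.
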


\begin{proof}
Suppressing the variables $S_1,\dots,S_k$, we have
$$
\begin{bmatrix}
1  & -uQ^{-1}\\
0 & 1_{n-1}
\end{bmatrix}
\begin{bmatrix}
z  & -u\\
-v & -Q
\end{bmatrix}
\begin{bmatrix}
1  & 0\\
-Q^{-1}v & 1_{n-1}
\end{bmatrix}=
\begin{bmatrix}
z-P  & 0\\
0 & -Q
\end{bmatrix},\quad z\in\mathbb C.
$$
Lemma \ref{Q invers} implies that $\det Q(S_1,\dots,S_k)$ is $\pm1$ and the determinant identity follows 
immediately. The dimension of the kernel of a square matrix does not change if the matrix is 
multiplied by some other invertible matrices.  Also, since $Q$ is invertible, the 
kernel of the matrix on the right hand side of the last equality is easily 
identified with $\ker(z-P)$.  The last assertion follows from these observations.
\end{proof}

In the case of selfadjoint polynomials, applied to selfadjoint matrices, we 
can estimate how far $ze_{1,1}-L$ is from 
not being invertible. 

\begin{lemma}\label{alta lema}
Suppose that $P=P^*\in \mathbb C\langle X_1,\dots,X_k\rangle$, and let 
$$
L=\begin{bmatrix}
0 & u^*\\
u & Q
\end{bmatrix}\in M_n(\mathbb C\langle X_1,\dots,X_k\rangle)
$$
be a linearization of $P$ with the properties outlined above. There exist
polynomials $T_1,T_2\in\mathbb C[ X_1,\dots,X_k]$ with nonnegative coefficients 
with the following property: given arbitrary selfadjoint elements $S_1,\dots,S_k$ in a 
unital $C^*$-algebra $\mathcal A$, and given  $z_0\in\mathbb C$ such that 
$z_0-P(S)$ is invertible, we have
$$
\left\|(z_0e_{1,1}-L(S))^{-1}\right\|\leq 
T_1\left(\|S_1\|,\dots,\|S_k\|\right) \left\|(z_0-P(S))^{-1}\right\|+ 
T_2\left(\|S_1\|,\dots,\|S_k\|\right).
$$
In particular, given two real constants $C,\delta>0$, there exists $\varepsilon>0$ such that  
${\mathrm{dist}}(z_0,\sigma(P(S)))\geq \delta$ and $\|S_1\|+\cdots+\|S_k\|\leq C$ imply   
$\mathrm{dist}(0,\sigma(z_0e_{1,1}-L(S)))\ge\varepsilon$.
\end{lemma}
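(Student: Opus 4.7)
The plan is to invert $z_0e_{1,1}-L(S)$ using the block factorization underlying Lemma \ref{o lema}, and then to bound each of the resulting entries by a polynomial in $\|S_1\|,\dots,\|S_k\|$. Using that $u^*Q^{-1}u=-P$ for this selfadjoint linearization (a direct consequence of how the selfadjoint linearization was assembled from a linearization of $P_0$ with $P=P_0+P_0^*$), one checks the identity
\begin{equation*}
z_0e_{1,1}-L(S)=
\begin{bmatrix} 1 & u(S)^*Q(S)^{-1} \\ 0 & 1_{n-1} \end{bmatrix}
\begin{bmatrix} z_0-P(S) & 0 \\ 0 & -Q(S) \end{bmatrix}
\begin{bmatrix} 1 & 0 \\ Q(S)^{-1}u(S) & 1_{n-1} \end{bmatrix}.
\end{equation*}
When $z_0-P(S)$ is invertible, inverting each of the three factors produces an explicit $2\times 2$ block expression for $(z_0e_{1,1}-L(S))^{-1}$ whose $(1,1)$-block is $(z_0-P(S))^{-1}$ and whose other blocks are built from at most two copies of $u(S)$ or $u(S)^*$, one or two factors of $Q(S)^{-1}$, and at most one factor of $(z_0-P(S))^{-1}$, plus an additive $-Q(S)^{-1}$ contribution in the lower-right block.

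Next, control the ingredients. Since $u$ is a linear polynomial in noncommuting variables with matrix coefficients, there exist constants $a_0,\dots,a_k\ge 0$ with $\|u(S)\|\le a_0+a_1\|S_1\|+\cdots+a_k\|S_k\|$. Lemma \ref{Q invers} gives $Q(S)^{-1}=T(1_{n-1}+N(S))$ for a fixed permutation matrix $T$ and a strictly lower triangular polynomial matrix $N$ independent of $S$, so $\|Q(S)^{-1}\|\le 1+\|N(S)\|$, and $\|N(S)\|$ is bounded by a polynomial in $\|S_1\|,\dots,\|S_k\|$ with nonnegative coefficients. Summing the norm estimates for the four blocks of $(z_0e_{1,1}-L(S))^{-1}$ produces polynomials $T_1,T_2$ with nonnegative coefficients verifying the stated inequality.

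For the final assertion, the hypothesis $\|S_1\|+\cdots+\|S_k\|\le C$ yields an a priori bound $\|P(S)\|\le M(C,P)$, so $\mathrm{dist}(z_0,\sigma(P(S)))\ge \delta$ forces $\|(z_0-P(S))^{-1}\|\le \delta^{-1}$. Substituting into the main inequality bounds $\|(z_0e_{1,1}-L(S))^{-1}\|$ uniformly by a constant $K=K(C,\delta,P)$. Since any invertible element $A$ of a unital $C^*$-algebra satisfies $\mathrm{dist}(0,\sigma(A))\ge 1/\|A^{-1}\|$ (the usual Neumann-series argument: $|\lambda|<1/\|A^{-1}\|$ forces $A-\lambda=A(1-\lambda A^{-1})$ to be invertible), the choice $\varepsilon=1/K$ gives the conclusion. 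The only mildly delicate point is setting up the selfadjoint variant of the block factorization correctly; everything else amounts to routine operator-norm bookkeeping.
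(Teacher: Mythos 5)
Your argument is correct and is essentially the paper's own proof: you use the same block triangular factorization (the one underlying Lemma \ref{o lema}, rearranged so that $(z_0e_{1,1}-L(S))^{-1}$ has $(z_0-P(S))^{-1}$ in its $(1,1)$ corner and otherwise only $u(S)$, $u(S)^*$, $Q(S)^{-1}$), bound those ingredients polynomially in the $\|S_i\|$ via Lemma \ref{Q invers}, and conclude with $\mathrm{dist}(0,\sigma(a))\ge 1/\|a^{-1}\|$ together with $\|(z_0-P(S))^{-1}\|=1/\mathrm{dist}(z_0,\sigma(P(S)))$ for selfadjoint $P(S)$. Nothing essential differs, so there is nothing further to add.
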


\begin{proof}
For every element $a$ of a $C^*$-algebra, we have 
${\rm{dist}}(0,\sigma(a))\ge1/\|a^{-1}\|$. 
Equality is achieved, for instance, if $a=a^*$. A matrix calculation
(in which we suppress the variables $S$) shows that
$$
(z_0e_{1,1}-L)^{-1}=
\begin{bmatrix}
1  & 0\\
-Q^{-1}u & 1_{n-1}
\end{bmatrix}
\begin{bmatrix}
(z_0-P)^{-1}  & 0\\
0&-Q^{-1}
\end{bmatrix}
\begin{bmatrix}
1  & -u^*Q^{-1}\\
0 & 1_{n-1}
\end{bmatrix}.
$$
The lemma follows now because the entries of $u(S)$, $u^*(S)$, and $Q(S)^{-1}$ 
are polynomials in $S$, and
 $$\|(z_0-P(S))^{-1}\|=1/{\rm{dist}}(z_0,\sigma(P(S)))$$ because $P(S)$ 
 is selfadjoint.
\end{proof}
The dependence on $L$ in the above lemma is given via the norms of $Q^{-1}$ and of $u$. 
Since $\lim_{z\to\infty}\|(ze_{1,1}-L(S))^{-1}\|\ne0$, we see that $T_2 \ne0$.

\section{Subordination}\label{sec:subordination}

Consider a von Neumann algebra $\mathcal M$ endowed with a normal faithful tracial state $\tau$, let 
$\mathcal B\subset\mathcal N\subset\mathcal M$ be unital von Neumann subalgebras, and denote by 
$E_\mathcal N:\mathcal M\to\mathcal N$ the unique trace-preserving conditional expectation of  
$\mathcal M$ onto $\mathcal N$ (see \cite[Proposition V.2.36]{Takesaki1}). Denote by $\mathbb H^+
(\mathcal M)$ the operator upper-half plane of $\mathcal M$: $\mathbb H^+
(\mathcal M)=\{x\in\mathcal M\colon\Im x:=(x-x^*)/2i>0\}$. Given two arbitrary selfadjoint 
elements $c,d\in\mathcal M$, we define the open set
$\mathcal G_{c,d,\mathcal B,\mathcal N}$
to consist of those elements $\beta\in\mathcal  B$ such that $\beta-(c+d)$ is invertible and 
$E_\mathcal N((\beta-(c+d))^{-1})$ is invertible as well. Then the function
\begin{equation*}
\omega_{c,d,\mathcal B,\mathcal N}:\mathcal G_{c,d,\mathcal B,\mathcal N}\to\mathcal M
\end{equation*}
defined by
\begin{equation}
\omega_{c,d,\mathcal B,\mathcal N}(\beta)=c+[E_\mathcal N((\beta-(c+d))^{-1})]^{-1},\quad \beta\in\mathcal G_{c,d,\mathcal B,\mathcal N},
\label{definition of omega cdBN}
\end{equation}
is analytic. This equation can also be written as
\begin{equation}
E_\mathcal N((\beta-(c+d))^{-1})=
(\omega_{c,d,\mathcal B,\mathcal N}(\beta)-c)^{-1}\quad \beta\in\mathcal G_{c,d,\mathcal B,\mathcal N}.\label{subordination equation}
\end{equation}
Properties (1), (2), and (3) in the following lemma are easy observations, while (4) follows as in 
\cite[Remark 2.5]{BPV}.

\begin{lemma}\label{preliminary}
	Fix $\mathcal B\subset\mathcal N\subset\mathcal M$ and $c,d\in\mathcal M$  as above. Then:
	\begin{enumerate}
		\item The set $\mathcal G_{c,d,\mathcal B,\mathcal N}$ is selfadjoint.
		\item 
		$
		\omega_{c,d,\mathcal B,\mathcal N}(\beta^*)=\omega_{c,d,\mathcal B,\mathcal N}(\beta)^*$, 
$\beta\in\mathcal G_{c,d,\mathcal B,\mathcal N}.
		$
		\item $\mathbb H^+(\mathcal B)\subset\mathcal  G_{c,d,\mathcal B,\mathcal N}$ and 
		$\omega_{c,d,\mathcal B,\mathcal N}
		(\mathbb H^+(\mathcal B))\subset \mathbb H^+(\mathcal N)$.
		\item $\Im(\omega_{c,d,\mathcal B,\mathcal N}(\beta))\ge\Im(\beta)$, $\beta\in\mathbb H^+(\mathcal B)$.
	\end{enumerate}
\end{lemma}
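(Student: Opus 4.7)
My approach is to derive everything from the defining formula \eqref{definition of omega cdBN} using two standard properties of the conditional expectation $E_\mathcal N$: it is $*$-preserving and it is a completely positive unital $\mathcal N$-bimodule map. Claims (1) and (2) are formal, while (3) and (4) both flow from a single resolvent computation.

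\textbf{(1) and (2).} Since $c+d$ is selfadjoint, $(\beta-(c+d))^*=\beta^*-(c+d)$, and because $E_\mathcal N$ commutes with the $*$-operation and with inversion in the appropriate sense,
$$
E_\mathcal N\bigl((\beta^*-(c+d))^{-1}\bigr)=E_\mathcal N\bigl((\beta-(c+d))^{-1}\bigr)^*.
$$
Since invertibility is preserved by $*$, both clauses defining $\mathcal G_{c,d,\mathcal B,\mathcal N}$ are stable under $\beta\mapsto\beta^*$, proving (1); substituting into \eqref{definition of omega cdBN} and using $(A^{-1})^*=(A^*)^{-1}$ proves (2).

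\textbf{(3) and (4).} Fix $\beta\in\mathbb H^+(\mathcal B)$ and set $y=(\beta-(c+d))^{-1}$, $z=E_\mathcal N(y)$. Since $\Im(\beta-(c+d))=\Im\beta$ is strictly positive and invertible, $y$ exists. The resolvent identity $\Im y=-y(\Im\beta)y^*$, combined with the operator-valued Schwarz inequality
$$
E_\mathcal N\bigl(y(\Im\beta)y^*\bigr)\ge E_\mathcal N(y)(\Im\beta)E_\mathcal N(y)^*,
$$
gives $-\Im z\ge z(\Im\beta)z^*\ge 0$. Using the estimates $\Im\beta\ge\|(\Im\beta)^{-1}\|^{-1}\,1$ and $yy^*\ge\|\beta-(c+d)\|^{-2}\,1$ together with positivity of $E_\mathcal N$ yields the stronger bound $-\Im z\ge c_0\,1$ for some explicit $c_0>0$, so $\Im z$ is strictly negative and hence invertible, which forces $z$ itself to be invertible; this shows $\beta\in\mathcal G_{c,d,\mathcal B,\mathcal N}$. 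Conjugating $-\Im z\ge z(\Im\beta)z^*$ by $z^{-1}$ on the left and $(z^*)^{-1}$ on the right gives
$$
\Im\omega(\beta)=\Im(z^{-1})=-z^{-1}(\Im z)(z^*)^{-1}\ge\Im\beta,
$$
which is (4); together with $\omega(\beta)\in\mathcal N$ (which holds under the tacit assumption $c\in\mathcal N$ implicit in this setup, since otherwise the codomain of the second inclusion of (3) would not contain $\omega(\beta)$), this completes (3).

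The only ingredient that requires anything beyond algebraic manipulation is the operator Schwarz inequality $E_\mathcal N(yby^*)\ge E_\mathcal N(y)\,b\,E_\mathcal N(y)^*$ for $b\ge0$ in $\mathcal N$. It is obtained by factoring $b=b^{1/2}b^{1/2}$, setting $X=yb^{1/2}$, and applying the standard Kadison--Schwarz inequality $E_\mathcal N(XX^*)\ge E_\mathcal N(X)E_\mathcal N(X)^*$, together with the right $\mathcal N$-module identity $E_\mathcal N(yb^{1/2})=E_\mathcal N(y)b^{1/2}$. This isolates the one analytic input; everything else is routine, consistent with the authors' remark that (1)--(3) are ``easy observations''.
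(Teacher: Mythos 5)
Your proof is correct, and it is essentially the argument the paper has in mind: the paper dismisses (1)--(3) as easy observations and cites \cite[Remark 2.5]{BPV} for (4), and the standard argument behind that reference is exactly your computation $\Im\bigl((\beta-(c+d))^{-1}\bigr)=-(\beta-(c+d))^{-1}\Im\beta\,(\beta-(c+d))^{-*}$ combined with the Kadison--Schwarz inequality for $E_{\mathcal N}$ and conjugation by $E_{\mathcal N}\bigl((\beta-(c+d))^{-1}\bigr)^{-1}$. Your remark that the inclusion $\omega(\mathbb H^+(\mathcal B))\subset\mathbb H^+(\mathcal N)$ tacitly uses $c\in\mathcal N$ is a fair reading of the statement and is satisfied in all of the paper's applications, where $\mathcal N\supseteq\mathcal B\langle c\rangle$.
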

There is one important case in which $\omega_{c,d,\mathcal B,\mathcal N}$ takes values in $\mathcal B$, 
and thus (\ref{subordination equation}) allows us to view $\omega_{c,d,\mathcal B,\mathcal N}|\mathbb 
H^+(\mathcal B)$ as a subordination function in the sense of Littlewood. Denote by 
$\mathcal G_{c,d,\mathcal B,\mathcal N}^0$ the connected component of $\mathcal G_{c,d,\mathcal B,
\mathcal N}$ that contains $\mathbb H^+(B)$. The following basic result is from \cite{V2}.

\begin{thm}\label{omega matrix valued}
	With the above notation, suppose that $c$ and $d$ are free over $\mathcal B$ and $\mathcal 
N=\mathcal B\langle c\rangle$ is the unital von Neumann generated by $\mathcal B$ and $c$. Then 
	$$
\omega_{c,d,\mathcal B,\mathcal N}(\mathcal G^0_{c,d,\mathcal B,\mathcal N})\subset\mathcal  B.
$$
\end{thm}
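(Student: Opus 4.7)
The plan is to first establish the inclusion on the nonempty open subset $\mathbb H^+(\mathcal B) \subset \mathcal G^0_{c,d,\mathcal B,\mathcal N}$ (cf.\ Lemma \ref{preliminary}(3)), and then propagate it to all of $\mathcal G^0_{c,d,\mathcal B,\mathcal N}$ by analytic continuation. Write $\omega := \omega_{c,d,\mathcal B,\mathcal N}$ for brevity.

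First I would work on $\mathbb H^+(\mathcal B)$, where the defining relation (\ref{subordination equation}) reads
\begin{equation*}
E_\mathcal N((\beta - c - d)^{-1}) = (\omega(\beta) - c)^{-1}.
\end{equation*}
The content of the theorem is that the left-hand side can be put in this form with $\omega(\beta) \in \mathcal B$, not merely in $\mathcal N$. Following Voiculescu's original matrix-valued subordination argument, I would exploit the $\mathcal B$-freeness of $c$ and $d$ through the $\mathcal B$-valued R-transform $R^\mathcal B_d$ of $d$. For $\beta$ with sufficiently large imaginary part, I would expand
\begin{equation*}
E_\mathcal N((\beta - c - d)^{-1}) = (\beta - c)^{-1} + \sum_{k \geq 1} E_\mathcal N\bigl((\beta - c)^{-1}(d(\beta - c)^{-1})^k\bigr),
\end{equation*}
pull the $\mathcal N$-valued factors $(\beta - c)^{-1}$ outside $E_\mathcal N$, and apply Speicher's operator-valued moment–cumulant formula. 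Because $d$ is $\mathcal B$-free from $\mathcal N$ (a direct consequence of the $\mathcal B$-freeness of $c$ and $d$), a resummation yields the functional equation
\begin{equation*}
\omega(\beta) = \beta - R^\mathcal B_d\bigl(E_\mathcal N((\beta - c - d)^{-1})\bigr),
\end{equation*}
which exhibits $\omega(\beta)$ as an element of $\mathcal B$ on a nonempty open subset of $\mathbb H^+(\mathcal B)$ where the Neumann series converges in norm.

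For the extension, recall that $\omega$ is analytic on $\mathcal G^0_{c,d,\mathcal B,\mathcal N}$ and that $\mathcal B$ is a norm-closed linear subspace of $\mathcal M$. For any bounded linear functional $\varphi$ on $\mathcal M$ annihilating $\mathcal B$, the scalar analytic function $\varphi \circ \omega$ vanishes on a nonempty open subset of the connected open set $\mathcal G^0_{c,d,\mathcal B,\mathcal N}$, hence vanishes identically. By Hahn--Banach, this forces $\omega(\mathcal G^0_{c,d,\mathcal B,\mathcal N}) \subset \mathcal B$.

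The main obstacle is the first step: justifying the functional equation rigorously and showing that $R^\mathcal B_d$, evaluated on an $\mathcal N$-valued argument, still produces a $\mathcal B$-valued output. This requires careful bookkeeping of operator-valued free cumulants together with the $\mathcal B$-freeness of $d$ from $\mathcal N = \mathcal B\langle c\rangle$. Convergence issues confine the direct calculation to a neighborhood of infinity in $\mathbb H^+(\mathcal B)$, which is precisely why the analytic continuation step is indispensable for obtaining the conclusion on the full connected component $\mathcal G^0_{c,d,\mathcal B,\mathcal N}$.
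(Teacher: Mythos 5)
The paper itself offers no proof of this theorem: it is quoted from Voiculescu (the citation \cite{V2}, in substance the operator-valued subordination theory of \cite{V2000}), so there is no internal argument to compare you against. Your route is the standard alternative: prove the subordination identity by an operator-valued moment--cumulant computation on a neighbourhood of infinity in $\mathbb H^+(\mathcal B)$, then propagate the conclusion to all of $\mathcal G^0_{c,d,\mathcal B,\mathcal N}$ by the identity theorem for analytic maps on a connected open subset of a Banach space together with Hahn--Banach and the norm-closedness of $\mathcal B$. The continuation half of your argument is complete and correct (note that $\omega$ is analytic on all of $\mathcal G_{c,d,\mathcal B,\mathcal N}$ by its very definition \eqref{definition of omega cdBN}, and once $E_{\mathcal N}((\beta-c-d)^{-1})=(w(\beta)-c)^{-1}$ with $w(\beta)\in\mathcal B$ near infinity, one gets $\omega(\beta)=w(\beta)\in\mathcal B$ there), and it is exactly how one passes from a neighbourhood of infinity to the whole component. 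Voiculescu's own proof has a different flavour (free difference quotient / free Markovianity rather than cumulants), but nothing in the present paper depends on which proof is used.

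Two points in your first step need tightening, and they are the substance of the theorem rather than bookkeeping. As written, $R^{\mathcal B}_d$ evaluated at the $\mathcal N$-valued element $E_{\mathcal N}((\beta-c-d)^{-1})$ is not defined; the identity you want is $\omega(\beta)=\beta-R^{\mathcal B}_d\bigl(E_{\mathcal B}((\beta-c-d)^{-1})\bigr)$, and the mechanism producing it is precisely the cumulant characterization of freeness with amalgamation from \cite{NSS} (the reference invoked for Proposition \ref{fr}): since $d$ is free from $\mathcal N=\mathcal B\langle c\rangle$ over $\mathcal B$, the $\mathcal N$-valued cumulants of $d$ with arguments $n_i\in\mathcal N$ coincide with the $\mathcal B$-valued cumulants evaluated at $E_{\mathcal B}(n_i)$, so every block arising in the expansion of $E_{\mathcal N}\bigl((\beta-c)^{-1}(d(\beta-c)^{-1})^k\bigr)$ contributes an element of $\mathcal B$, and the resummation over noncrossing partitions reorganizes the series into $(w(\beta)-c)^{-1}$ with $w(\beta)\in\mathcal B$. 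Also, only the outermost factors $(\beta-c)^{-1}$ can literally be pulled out of $E_{\mathcal N}$; the interior ones are handled by $\mathcal N$-bimodularity inside the moment--cumulant formula, not by extraction. These are exactly the computations you flag as the ``main obstacle''; they are standard and do go through, but a complete write-up must carry out (or cite) this resummation, since asserting it is asserting the theorem on the neighbourhood of infinity.
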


In our applications, the algebra $\mathcal B$ is (isomorphic to) $M_n(\mathbb C)$ 
for some $n\in\mathbb N$. More precisely, let  $\mathcal M$ be a von Neumann algebra endowed with 
a normal faithful tracial state $\tau$, and let $n\in\mathbb N$. Then $M_n (\mathbb C)$ can be identified 
with the subalgebra $M_n(\mathbb C)\otimes1$ of $M_n ({\mathcal M})=M_n(\mathbb C)\otimes 
\mathcal M$. Moreover, $M_n({\mathcal M})$ is endowed with the faithful 
normal tracial state $\mathrm{tr}_n\otimes\tau=(1/n)\mathrm{Tr}_n\otimes\tau$,
and $\mathrm{Id}_{M_n(\mathbb C)}\otimes\tau$ is the trace-preserving conditional expectation from 
$M_n ({\mathcal M})$ to $M_n(\mathbb C)$. The following result is from \cite{NSS}.

\begin{prop}\label{fr}
	Let $\mathcal{M}$ be a von Neumann algebra endowed 
	with a normal
	faithful tracial state $\tau$, 
	let $c,d\in\mathcal M$ be freely independent, let $n$ be a 
	positive integer, 
	and let $\gamma_1,\gamma_2 \in M_n(\mathbb{C})$. Then 
	$\gamma_1\otimes c$ and $\gamma_2\otimes d$  are free over $M_n (\mathbb C)$.
\end{prop}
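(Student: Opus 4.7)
The plan is to verify the definition of freeness over $M_n(\mathbb{C})$ directly, using the fact that the tensor structure of $M_n(\mathcal{M})=M_n(\mathbb{C})\otimes\mathcal{M}$ lets the scalar matrix factors commute past the $\mathcal{M}$-factors, so that products reduce to a matrix coefficient times a word in $c$ and $d$ in $\mathcal{M}$.

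First I would set up the algebras. The relevant conditional expectation is $E=\mathrm{Id}_{M_n(\mathbb{C})}\otimes\tau$, which maps onto $M_n(\mathbb{C})\otimes 1$. Let $\mathcal{A}_1$ and $\mathcal{A}_2$ denote the unital $*$-algebras generated by $M_n(\mathbb{C})$ together with $\gamma_1\otimes c$, respectively $\gamma_2\otimes d$. Because $M_n(\mathbb{C})\otimes 1$ commutes with $1\otimes\mathcal{M}$, every word of the form $(\alpha_0\otimes 1)(\gamma_1\otimes c)(\alpha_1\otimes 1)(\gamma_1\otimes c)\cdots(\alpha_{k-1}\otimes 1)(\gamma_1\otimes c)(\alpha_k\otimes 1)$ collapses to $(\alpha_0\gamma_1\alpha_1\gamma_1\cdots\gamma_1\alpha_k)\otimes c^k$. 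Thus $\mathcal{A}_1\subseteq M_n(\mathbb{C})\otimes\mathbb{C}\langle c\rangle$, and analogously $\mathcal{A}_2\subseteq M_n(\mathbb{C})\otimes\mathbb{C}\langle d\rangle$.

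Next I would check the moment–cumulant condition for $M_n(\mathbb{C})$-freeness: given $a_1,a_2,\ldots,a_k$ with $a_j\in\mathcal{A}_{i_j}$, $i_1\neq i_2\neq\cdots\neq i_k$, and $E(a_j)=0$ for each $j$, show that $E(a_1a_2\cdots a_k)=0$. By linearity, reduce to the case where each $a_j$ is an elementary tensor $\alpha_j\otimes y_j$ with $y_j$ a polynomial in $c$ (if $i_j=1$) or in $d$ (if $i_j=2$); the centering condition $E(a_j)=\alpha_j\otimes\tau(y_j)=0$ can, by further linearity, be arranged as $\tau(y_j)=0$. Using the commutation of scalar and $\mathcal{M}$-factors once more,
\begin{equation*}
a_1a_2\cdots a_k=(\alpha_1\alpha_2\cdots\alpha_k)\otimes(y_1y_2\cdots y_k),
\end{equation*}
so that
\begin{equation*}
E(a_1a_2\cdots a_k)=(\alpha_1\alpha_2\cdots\alpha_k)\otimes\tau(y_1y_2\cdots y_k).
\end{equation*}
Now the sequence $y_1,\ldots,y_k$ is an alternating, $\tau$-centered sequence from $\mathbb{C}\langle c\rangle$ and $\mathbb{C}\langle d\rangle$, and the scalar-valued freeness of $c$ and $d$ in $(\mathcal{M},\tau)$ yields $\tau(y_1\cdots y_k)=0$, which is what we needed.

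There is no serious obstacle: the only point that requires care is the bookkeeping observation that scalar matrix factors $\alpha_j\otimes 1$ commute with everything in $1\otimes\mathcal{M}$, which is what reduces $M_n(\mathbb{C})$-valued freeness of $\gamma_1\otimes c$ and $\gamma_2\otimes d$ to the given scalar freeness of $c$ and $d$. No assumption on the matrices $\gamma_1,\gamma_2$ is used, in agreement with the stated generality.
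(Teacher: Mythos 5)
Your argument is correct. Note that the paper does not actually prove Proposition \ref{fr}: it simply quotes it from the reference [NSS] (Nica--Shlyakhtenko--Speicher), so your write-up supplies a self-contained verification rather than mirroring a proof in the text; what you give is in fact the standard argument behind the cited result, and it proves the slightly stronger statement that $M_n(\mathbb{C})\otimes\mathcal{C}$ and $M_n(\mathbb{C})\otimes\mathcal{D}$ are free with amalgamation over $M_n(\mathbb{C})$ whenever the $*$-algebras $\mathcal{C},\mathcal{D}\subset\mathcal{M}$ are free with respect to $\tau$. The one step you pass over quickly, the reduction ``by further linearity'' to elementary tensors $\alpha_j\otimes y_j$ with $\tau(y_j)=0$, is fine but deserves its one-line justification: write each $y$ in a decomposition of $a_j$ as $\tau(y)1+\mathring{y}$ with $\tau(\mathring y)=0$; the scalar parts reassemble into $E(a_j)\otimes 1=0$, so $a_j$ is a sum of elementary tensors whose $\mathcal{M}$-legs are already centered, and multilinearity of the product then gives the expansion you use. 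Two cosmetic remarks: since $c,d$ need not be selfadjoint, the second tensor legs live in the unital $*$-algebras generated by $c$ and by $d$ (freeness of $c$ and $d$ is freeness of these $*$-algebras, so the scalar freeness argument applies verbatim); and if one insists on freeness of the generated von Neumann algebras over $M_n(\mathbb{C})$, this follows from your algebraic statement by normality of $E=\mathrm{Id}_{M_n(\mathbb{C})}\otimes\tau$ and density, a routine addendum. None of this affects the validity of your proof.
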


We show next how the spectrum of  $P(c,d)$ relates with the functions $\omega$ 
defined above. Thus, we fix $P=P^*\in\mathbb C\langle X_1,X_2\rangle$
and a linearization $L=\gamma_0\otimes1+\gamma_1\otimes X_1
+\gamma_2\otimes X_2$ of $P$ as constructed in Section \ref{sec:linearization}. Thus,  $\gamma_0,
\gamma_1,\gamma_2\in M_n(\mathbb{C})$ are selfadjoint matrices for some $n\in\mathbb N$. (Clearly
 $\gamma_1 \ne0$ unless $P\in\mathbb C\langle X_2\rangle$.)
 Then we consider the random variables $\gamma_1\otimes c$ and $\gamma_2\otimes d$ in 
$M_n(\mathcal M)$, the algebra $\mathcal B=M_n(\mathbb C)\subset M_n (\mathcal M)$, 
and $\mathcal N=M_n(\mathbb C\langle c\rangle)$; clearly $\mathcal B\subset\mathcal N\subset 
M_n(\mathcal M)$. We set 
$$
\mathcal G=\mathcal G^0_{\gamma_1\otimes c,\gamma_2\otimes d,\mathcal B,\mathcal N}
$$ 
and
$$
\omega=\omega_{\gamma_1\otimes c,\gamma_2\otimes d,\mathcal B,\mathcal N}
\colon\mathcal G\to \mathcal N.
$$
Thus,
$$E_{\mathcal N}\left[(\beta\otimes1-\gamma_1\otimes c-\gamma_2\otimes d)^{-1}\right]
=(\omega(\beta)\otimes1-\gamma_1\otimes c)^{-1},\quad \beta\in \mathcal G.$$ 
The left hand side of this equation is 
defined if $\beta=ze_{11}-\gamma_0$ for 
some $z\in\mathbb{C}\setminus \sigma(P(c,d))$, 
so it would be desirable that 
$ze_{11}-\gamma_0\in \mathcal G$ for such values 
of $z$. 
This is not true except for special cases. (One such case applies to $P=X_1+X_2$ if $d$ is a semicircular 
variable free from $c$ \cite{biane,CAOT}.) The following lemma offers a partial result.

\begin{lemma}\label{large |z|} 
	With the notation above, there exists $k>0$ depending only on $L$, $\|c\|$, and $\|d\|$ such that 
	$ze_{1,1}-\gamma_0\in\mathcal G$ if
	$|z|>k$. The analytic function $u(z)=\omega(ze_{1,1}-\gamma_0)$ satisfies the equation $u(\overline{z})=u(z)^*$ for $|z|>k$.
\end{lemma}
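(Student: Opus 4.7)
Let $M(z):=ze_{1,1}-L(c,d)$ and $\beta=ze_{1,1}-\gamma_0$, $c'=\gamma_1\otimes c$, $d'=\gamma_2\otimes d$; note that $\beta\otimes 1-c'-d'=M(z)$. The plan is to verify the two defining conditions of $\mathcal{G}_{c',d',\mathcal{B},\mathcal{N}}$ at $\beta$ for large $|z|$ and then to use a path argument to place $\beta$ in the connected component $\mathcal{G}^0$ that contains $\mathbb{H}^+(\mathcal{B})$.

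Invertibility of $M(z)$ for $|z|>\|P(c,d)\|$ is the linearization property recalled in Section~\ref{sec:linearization} (equivalently Lemma~\ref{o lema}). The Schur factorization used in the proof of Lemma~\ref{alta lema} gives
$$
M(z)^{-1}=\begin{pmatrix}(z-P)^{-1} & -(z-P)^{-1}uQ^{-1}\\ -Q^{-1}v(z-P)^{-1} & -Q^{-1}+Q^{-1}v(z-P)^{-1}uQ^{-1}\end{pmatrix},
$$
with all polynomials evaluated at $(c,d)$. Applying $E_\mathcal{N}$ entrywise and substituting the Neumann series $(z-P(c,d))^{-1}=z^{-1}\sum_{k\ge0}(P(c,d)/z)^k$ (convergent for $|z|>\|P(c,d)\|$), I read off a block decomposition
$$
E_\mathcal{N}(M(z)^{-1})=\begin{pmatrix}a(z)&b(z)\\ c(z)&d(z)\end{pmatrix},
$$
in which $z\,a(z)\to 1$ in norm, $b(z)$ and $c(z)$ are of order $1/z$, and $d(z)\to-E_\mathcal{N}(Q(c,d)^{-1})$ as $|z|\to\infty$. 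The key structural input is Lemma~\ref{Q invers}: writing $Q(c,d)^{-1}=T(1+N(c,d))$ with $T$ a scalar permutation and $N(c,d)$ strictly lower triangular, entrywise application of $E_\mathcal{N}$ preserves this form, so $E_\mathcal{N}(Q^{-1})=T(1+E_\mathcal{N}(N))$ is invertible in $\mathcal{N}$ as the product of a permutation matrix and a unipotent lower-triangular one. For $|z|$ large, $a(z)$ is invertible (since $\|za(z)-1\|<1$) and the Schur complement $d(z)-c(z)a(z)^{-1}b(z)$ converges to the invertible element $-E_\mathcal{N}(Q^{-1})$, so $E_\mathcal{N}(M(z)^{-1})$ is invertible. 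Thus $\beta\in\mathcal{G}_{c',d',\mathcal{B},\mathcal{N}}$ for $|z|>k_1$, with $k_1$ depending only on $\|c\|$, $\|d\|$, and $L$.

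To place $\beta$ in $\mathcal{G}^0$, first consider real $z=r>k_1$: the path $\beta(y)=re_{1,1}-\gamma_0+iyI_n$ for $y>0$ satisfies $\Im\beta(y)=yI_n>0$, hence lies in $\mathbb{H}^+(\mathcal{B})\subset\mathcal{G}^0$ by Lemma~\ref{preliminary}(3); the endpoint $\beta(0)=re_{1,1}-\gamma_0$ lies in the open set $\mathcal{G}_{c',d',\mathcal{B},\mathcal{N}}$ by the preceding paragraph, so by continuity $\beta(0)$ lies in the same connected component as the nearby $\beta(y)$, namely $\mathcal{G}^0$. For arbitrary $z$ with $|z|>k_1$, any arc from $z$ to $|z|$ inside the path-connected set $\{w\in\mathbb{C}:|w|>k_1\}$ maps under $w\mapsto we_{1,1}-\gamma_0$ to a continuous path that remains in $\mathcal{G}_{c',d',\mathcal{B},\mathcal{N}}$ by the uniform estimates above, placing $ze_{1,1}-\gamma_0\in\mathcal{G}^0$. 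Setting $k=k_1$, analyticity of $u(z)=\omega(ze_{1,1}-\gamma_0)$ is immediate from composition with analytic $\omega$, and the identity $u(\bar z)=u(z)^*$ follows from Lemma~\ref{preliminary}(2), since $\gamma_0=\gamma_0^*$ implies $(ze_{1,1}-\gamma_0)^*=\bar ze_{1,1}-\gamma_0$, which likewise lies in $\mathcal{G}^0$ for $|z|>k$.

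The main obstacle is the apparent degeneracy of $E_\mathcal{N}(M(z)^{-1})$ at $z=\infty$: its $(1,1)$-block $a(z)$ shrinks like $1/z$, so a naive norm limit yields a singular matrix. The rescaling $za(z)\to 1$ combined with the algebraic identity from Lemma~\ref{Q invers}, which forces $E_\mathcal{N}(Q^{-1})$ to be invertible, is what rescues the argument.
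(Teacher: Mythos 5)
Your proof is correct and takes essentially the same route as the paper's: the same Schur-complement factorization of $(ze_{1,1}-L(c,d))^{-1}$, the Neumann-series estimates showing the $(1,1)$ block of $E_{\mathcal N}(M(z)^{-1})$ behaves like $1/z$ while the lower-right block tends to $-E_{\mathcal N}(Q^{-1})$, and Lemma \ref{Q invers} to conclude that $E_{\mathcal N}(Q^{-1})$, and hence $F(z)$, is invertible for $|z|$ large, with constants depending only on $L$, $\|c\|$, $\|d\|$. The explicit path argument you add to place $ze_{1,1}-\gamma_0$ in the connected component $\mathcal G^0$ containing $\mathbb H^+(M_n(\mathbb C))$, and the use of Lemma \ref{preliminary}(1)--(2) for $u(\overline z)=u(z)^*$, are valid details that the paper leaves implicit.
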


\begin{proof}
Define an analytic function $F:\mathbb{C}\setminus\sigma(P(c,d))\to \mathcal N$ by 
\begin{equation}
F(z)=E_{\mathcal N}\left[((ze_{11}-\gamma_0)\otimes1-\gamma_1\otimes c-\gamma_2\otimes d)^{-1}\right],\quad
z\in\mathbb{C}\setminus\sigma(P(c,d)).
\label{functia F}
\end{equation}
We show that $F(z)$ is invertible if $|z|$ is sufficiently large. Suppressing the variables $c$ and $d$ 
from the notation, it follows from the factorization used in the proof of Lemma \ref{o lema} that
$$
F(z)=E_{\mathcal N}\begin{bmatrix}
(z-P)^{-1} & -(z-P)^{-1}u^*Q^{-1} \\
-Q^{-1}u(z-P)^{-1} & Q^{-1}u(z-P)^{-1}u^*Q^{-1}-Q^{-1}
\end{bmatrix}.
$$
Moreover, because of the matrix structure of $\mathcal N$, this matrix can be obtained by applying 
$E_{\mathbb C\langle c\rangle}$ entrywise. According to the Schur complement formula, a matrix
$\begin{bmatrix}
A&B\\
C&D
\end{bmatrix}$ is invertible if both $A$ and $D-BA^{-1}C$ are invertible. For our matrix, we have
$A=A(z)=
E_{\mathbb C\langle c\rangle}((z-P)^{-1})$. The fact that
$\|zA(z)-1\|<1$ for $|z|>2\|P\|$ implies that $A(z)$ is invertible. Next, we see that $\|(z-P)^{-1}\|$ and $\|A(z)^{-1}\|$ are comparable to $1/|z|$ and $|z|$, respectively. Using these estimates, one sees also that
$\lim_{|z|\to\infty }\|B(z)A(z)^{-1}C(z)\|=0$, so the invertibility of $F(z)$ would follow from the invertibility of 
$D(z)$ for large $|z|$. Since $$\lim_{|z|\to\infty}\|D(z)+E_{M_{n-1}
	(
{\mathbb C\langle c\rangle})}(Q^{-1})\|=0,$$
we only need to verify that $E_{M_{n-1}
	(
{\mathbb C\langle c\rangle})}(Q^{-1})$
is invertible.  Write $Q^{-1}=T(1_{n-1}+N)$ as in Lemma \ref{Q invers}. We have 
$$E_{M_{n-1}
	(
{\mathbb C\langle c\rangle})}(Q^{-1})=T(1_{n-1}+E_{M_{n-1}
	(
{\mathbb C\langle c\rangle})}(N)),
$$
and $  E_{M_{n\!-\!1}
	(
{\mathbb C\langle c\rangle})}(N)$ is strictly lower triangular. The invertibility of $E_{M_{n\!-\!1}
	(
{\mathbb C\langle c\rangle})}(Q^{-1})$ follows. The quantities $\|D(z)+E_{M_{n-1}
	(
{\mathbb C\langle c\rangle})}(Q^{-1})\|$ and $\|Q^{-1}\|$ can be estimated using only $L$, $\|c\|$, and 
$\|d\|$, and this shows that $k$ can be chosen as a function of these objects. The last assertion of the 
lemma is immediate.
\end{proof}

The estimates in the preceding proof apply, by virtue of continuity, to nearby points in
$M_n(\mathbb C)$.  We record the result for later use.

\begin{cor}\label{cor:local bounds for omega}
	Let $c,d$ and $k$ be as in Lemma {\em\ref{large |z|}} and let $z\in\mathbb C\setminus[-k,k]$. 
Then there exist a constant $k'>0$ and a neighborhood $W$ of $ze_{1,1}-\gamma_0$, depending only on 
$\|c\|,\|d\|$, and $L$, such that $V\subset\mathcal G$ and $\|\omega(\beta)\|\le k'$ for $\beta\in W$.
\end{cor}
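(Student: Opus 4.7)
The plan is to upgrade Lemma \ref{large |z|} to a local, open-neighborhood statement by a standard continuity argument in a Banach algebra. First I would introduce the analytic $\mathcal N$-valued function
$$F(\beta)=E_{\mathcal N}\bigl[(\beta\otimes 1-\gamma_1\otimes c-\gamma_2\otimes d)^{-1}\bigr]$$
on the open subset $\mathcal D\subset M_n(\mathbb C)$ where its argument is invertible. Under the identification $\mathcal B=M_n(\mathbb C)\hookrightarrow M_n(\mathcal M)$ via $\beta\mapsto\beta\otimes1$, the set $\mathcal G_{\gamma_1\otimes c,\gamma_2\otimes d,\mathcal B,\mathcal N}$ coincides with $\{\beta\in\mathcal D:F(\beta)\text{ invertible}\}$; this is open, and its connected component $\mathcal G=\mathcal G^0$ is then open as well in the finite-dimensional, locally connected space $M_n(\mathbb C)$. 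The subordination identity (\ref{subordination equation}) now reads $\omega(\beta)=\gamma_1\otimes c+F(\beta)^{-1}$ throughout $\mathcal G$.

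Next I would fix $z\in\mathbb C$ with $|z|>k$ and set $\beta_0=ze_{1,1}-\gamma_0$. Lemma \ref{large |z|} guarantees $\beta_0\in\mathcal G$ and hence $F(\beta_0)$ invertible. Continuity of inversion on the open set of invertible elements then furnishes an open ball $W$ around $\beta_0$ inside $\mathcal D$ on which $F(\beta)$ remains invertible with $\|F(\beta)^{-1}\|\le 2\|F(\beta_0)^{-1}\|$. Since $\mathcal G$ is open, I may shrink $W$ (still to a ball) to ensure $W\subset\mathcal G$. Combining with the subordination identity yields
$$\|\omega(\beta)\|\le\|\gamma_1\|\,\|c\|+2\|F(\beta_0)^{-1}\|,\qquad\beta\in W,$$
so taking $k'$ equal to the right-hand side gives the required uniform bound on $W$.

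The remaining task is to control $\|F(\beta_0)^{-1}\|$ by a quantity depending only on $\|c\|$, $\|d\|$, and $L$ (plus, implicitly, on $z$). Here I would reuse the Schur-complement calculation already performed in the proof of Lemma \ref{large |z|}: the $(1,1)$-block $A(z)=E_{\mathbb C\langle c\rangle}((z-P)^{-1})$ is invertible with $\|A(z)^{-1}\|$ of order $|z|$; the off-diagonal contribution $B(z)A(z)^{-1}C(z)$ is small; and the lower-right block converges to $-E_{M_{n-1}(\mathbb C\langle c\rangle)}(Q^{-1})$, whose invertibility and the bound on its inverse follow from the triangular decomposition $Q^{-1}=T(1_{n-1}+N)$ supplied by Lemma \ref{Q invers}, with all quantities estimated purely in terms of $L$, $\|c\|$, and $\|d\|$.

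The only delicate point I anticipate is insisting on $W\subset\mathcal G^0$ (the connected component) rather than merely on the larger set $\mathcal G_{\gamma_1\otimes c,\gamma_2\otimes d,\mathcal B,\mathcal N}$, since this is what allows one to invoke Theorem \ref{omega matrix valued} and conclude that $\omega(\beta)\in\mathcal B$ on all of $W$, making the norm bound $k'$ meaningful in $M_n(\mathbb C)$. Because $\beta_0\in\mathcal G^0$ by Lemma \ref{large |z|} and $\mathcal G^0$ is open, any sufficiently small connected (e.g.\ ball) neighborhood of $\beta_0$ automatically lies in $\mathcal G^0$, so this is a matter of bookkeeping rather than a genuine obstacle.
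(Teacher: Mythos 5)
Your argument is correct and is essentially the paper's own proof: the paper disposes of this corollary with the single remark that the estimates in the proof of Lemma \ref{large |z|} extend, by continuity, to nearby points of $M_n(\mathbb C)$, and your write-up is exactly that remark made explicit — the identity $\omega(\beta)=\gamma_1\otimes c+F(\beta)^{-1}$, continuity of $F$ and of inversion to keep $F(\beta)$ invertible with a doubled bound on a ball, the Schur-complement bounds from the lemma to control $\|F(\beta_0)^{-1}\|$ in terms of $L,\|c\|,\|d\|$, and the observation that a small connected neighborhood of $ze_{1,1}-\gamma_0$ stays in the component $\mathcal G^0$ so that Theorem \ref{omega matrix valued} applies. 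The one caveat, which you share with the paper itself, is that the input from Lemma \ref{large |z|} is stated for $|z|>k$ while the corollary is phrased for all $z\in\mathbb C\setminus[-k,k]$, so like the paper your argument directly covers the former range (and, implicitly, the constants depend on $z$ as well).
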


In some cases of interest, the analytic function $u$ extends to the entire upper and lower half-planes. We 
recall that a function $v$ defined in a domain $G\subset\mathbb C$ with values in a Banach space 
$\mathcal X$ is said to be meromorphic if, for every $z_0\in G$, the function $(z-z_0)^nv(z)$ is analytic 
in a neighborhood of $z_0$ for sufficiently large $n$. For instance, if $\mathcal X$ is a finite dimensional 
Banach algebra and $h:G\to\mathcal X$ is an analytic function such that $h(z)$ is invertible for some 
$z\in G$, then the function $v(z)=h(z)^{-1}$ is meromorphic in $G$. This fact follows easily once we 
identify $\mathcal X$ with an algebra of matrices, so the inverse can be calculated using determinants.

\begin{lemma}\label{meromorphic finite dim}
The function $u$ defined in Lemma 
{\em\ref{large |z|}} is meromorphic in $\mathbb C\setminus\sigma(P(c,d))$.
\end{lemma}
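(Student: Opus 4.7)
The plan is to express $u$ as a quotient $G(z)^{-1}(1_n + H(z))$ of analytic $M_n(\mathbb C)$-valued functions on $\mathbb C\setminus\sigma(P(c,d))$, and then to invoke the principle recalled just before the lemma: an analytic function into a finite-dimensional Banach algebra that is invertible at a single point has a meromorphic inverse. Following the proof of Lemma~\ref{large |z|}, introduce the analytic $\mathcal N$-valued function
$$
F(z) = E_{\mathcal N}\bigl[(ze_{1,1}\otimes 1 - L(c,d))^{-1}\bigr],
$$
defined on $\mathbb C\setminus\sigma(P(c,d))$ by Lemma~\ref{o lema}. For $|z|>k$, Lemma~\ref{large |z|} gives $F(z)^{-1} = u(z)\otimes 1 - \gamma_1\otimes c$. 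Applying $E_\mathcal B = \mathrm{Id}_{M_n(\mathbb C)}\otimes\tau$ to the identity $F(z)(u(z)\otimes 1 - \gamma_1\otimes c) = 1$, and using that $u(z)\otimes 1\in\mathcal B$ together with the $\mathcal B$-bimodule property of $E_\mathcal B$, I obtain
$$
G(z)\,u(z) = 1_n + H(z), \qquad |z| > k,
$$
where $G(z) = E_\mathcal B[F(z)]$ and $H(z) = E_\mathcal B[F(z)(\gamma_1\otimes c)]$ are analytic $M_n(\mathbb C)$-valued functions on all of $\mathbb C\setminus\sigma(P(c,d))$.

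The main step is to verify that $G(z)$ is invertible for at least one $z$, which I would carry out by expanding $G(z)$ as $|z|\to\infty$ from the explicit block form of $(ze_{1,1}\otimes 1 - L(c,d))^{-1}$ computed in the proof of Lemma~\ref{large |z|}. In the $1\oplus(n-1)$ block decomposition associated to $e_{1,1}$,
$$
G(z) = \begin{bmatrix} z^{-1}+O(z^{-2}) & O(z^{-1}) \\ O(z^{-1}) & -E_\mathcal B(Q(c,d)^{-1}) + O(z^{-1}) \end{bmatrix}.
$$
The crucial point is that $E_\mathcal B(Q(c,d)^{-1})$ is invertible: by Lemma~\ref{Q invers}, $Q(c,d)^{-1} = T(1_{n-1}+N(c,d))$ with $T\in M_{n-1}(\mathbb C)$ a permutation matrix and $N(c,d)$ strictly lower triangular in $M_{n-1}(\mathcal M)$, whence $E_\mathcal B(Q(c,d)^{-1}) = T(1_{n-1}+E_\mathcal B(N(c,d)))$ has determinant $\pm 1$. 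A Schur complement computation then yields $\det G(z) = \pm z^{-1} + O(z^{-2})$, which is nonzero for all $|z|$ sufficiently large.

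Since $\mathbb C\setminus\sigma(P(c,d))$ is connected (the spectrum $\sigma(P(c,d))$ being a compact subset of $\mathbb R$, as $P(c,d)$ is selfadjoint) and $\det G$ is a nontrivial scalar-valued analytic function on it, $G^{-1}$ exists as a meromorphic $M_n(\mathbb C)$-valued function there; hence so does $u(z) = G(z)^{-1}(1_n + H(z))$. Since this formula holds on $\{|z|>k\}$, it furnishes the desired meromorphic extension of $u$ to $\mathbb C\setminus\sigma(P(c,d))$. The main obstacle is the invertibility of $G$: because the $|z|\to\infty$ limit of $G(z)$ is itself singular, one must descend to the subleading term, and it is precisely the structural Lemma~\ref{Q invers} on the linearization $L$ that supplies the nonvanishing determinantal factor $\pm 1$.
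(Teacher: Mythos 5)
There is a genuine gap, and it sits exactly at the step where you pass from the resolvent identity to your scalar-level factorization. From $F(z)\,(u(z)\otimes 1-\gamma_1\otimes c)=1$ you apply $E_{\mathcal B}=\mathrm{Id}_{M_n(\mathbb C)}\otimes\tau$ and pull $u(z)$ out of the conditional expectation, writing $E_{\mathcal B}[F(z)(u(z)\otimes 1)]=E_{\mathcal B}[F(z)]\,u(z)$. This uses the $\mathcal B$-bimodule property and therefore requires $u(z)\in\mathcal B=M_n(\mathbb C)$. But in the setting of Lemma \ref{large |z|} the function $u(z)=\omega(ze_{1,1}-\gamma_0)$ a priori takes values only in $\mathcal N$, and it lands in $\mathcal B$ only under the freeness hypothesis of Theorem \ref{omega matrix valued} --- which is precisely the situation of Lemma \ref{meromorphic free case}, not of the present lemma. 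The present lemma is the one invoked in Section \ref{cvF} for the random-matrix subordination functions $\omega_N$, where $c_N$ and $d_N$ are not free over $\mathcal B$ and, by Lemma \ref{L2.1}, $\omega_N(\beta)=\sum_j\omega_N^{(j)}(\beta)\otimes P_N^{(j)}$ is a genuinely block-diagonal element of $M_n(\mathbb C)\otimes M_N(\mathbb C)$, not an element of $M_n(\mathbb C)\otimes I_N$. For such $u$ the identity $G(z)u(z)=1_n+H(z)$ is simply not available, so your argument proves the statement only under an additional hypothesis ($u$ is $M_n(\mathbb C)$-valued) that fails in the lemma's intended range of application.

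The intended (and much shorter) route stays at the level of $\mathcal N$: under the lemma's hypothesis the algebra $\mathcal N$ in which $F$ takes its values is finite dimensional, the proof of Lemma \ref{large |z|} already shows that $F(z)$ is invertible for $|z|>k$, so by the observation preceding the lemma $F^{-1}$ is meromorphic on the connected set $\mathbb C\setminus\sigma(P(c,d))$, and then $u(z)=\gamma_1\otimes c+F(z)^{-1}$ is meromorphic there, with values in $\mathcal N$. Your verification that $E_{\mathcal B}(Q(c,d)^{-1})=T(1_{n-1}+E_{\mathcal B}(N(c,d)))$ is invertible via Lemma \ref{Q invers}, and the expansion $\det G(z)=\pm z^{-1}+O(z^{-2})$, are correct as far as they go, but they only redo for $E_{\mathcal B}$ what the proof of Lemma \ref{large |z|} already provides for $E_{\mathcal N}$; in the free case your factorization $u=G^{-1}(1_n+H)$ would indeed yield meromorphy of the $M_n(\mathbb C)$-valued $u$, but that case is covered (and strengthened, via the character argument giving analyticity of $(u-\lambda\gamma_1)^{-1}$) by Lemma \ref{meromorphic free case}. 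Also note, as a minor point, that Lemma \ref{o lema} is stated for matrices; the fact that $F$ is defined on all of $\mathbb C\setminus\sigma(P(c,d))$ comes from the defining property of the linearization in a general unital $C^*$-algebra.
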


\begin{proof}
	The lemma follow immediately from the observation preceding the statement applied to the function 
$F$ defined in (\ref{functia F}) which is analytic in $\mathbb C\setminus\sigma(P(c,d))$
	since, by hypothesis and by Theorem \ref{omega matrix valued}, $u$ takes values in a finite 
dimensional algebra. 
\end{proof}

The 
conclusion of the 
preceding lemma applies, for instance, when  $\mathcal M=M_N\otimes L^\infty(\Omega)$ with 
the usual trace $\mathrm{tr}_N\otimes\mathbb E$. This 
situation arises in the study of random matrices.  The function $u$ is also meromorphic provided that $c$ 
and $d$ are free random variables and $\mathcal N=M_n(\mathbb C)\langle\gamma_1
\otimes c\rangle$.  Since $\gamma_1\ne0$, we have $\mathcal N=M_n(\mathbb C\langle c\rangle)$.

\begin{lemma}\label{meromorphic free case}
	If $c$ and $d$ are free and $\mathcal N=M_n(\mathbb C)\langle\gamma_1
	\otimes c\rangle $, then the function $u$ defined in Lemma {\em\ref{large |z|}} is meromorphic in 
$\mathbb C\setminus\sigma(P(c,d))$ with values in $M_n(\mathbb C)$. Moreover, given an arbitrary 
$\lambda\in\sigma(c)$, the function 
	$(u-\lambda\gamma_1)^{-1}$ extends analytically to
	$\mathbb C\setminus\sigma(P(c,d))$.
\end{lemma}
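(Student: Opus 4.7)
The plan is to extend the subordination function $u$ from the region $\{|z|>k\}$ given by Lemma \ref{large |z|} to all of $\mathbb C\setminus\sigma(P(c,d))$, exploiting the fact that by Theorem \ref{omega matrix valued} $u$ takes values in the finite-dimensional subalgebra $M_n(\mathbb C)$, even though $F$ itself lies in the potentially infinite-dimensional $\mathcal N$.

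First, I would observe that by Lemma \ref{o lema} the resolvent $(ze_{1,1}\otimes 1-L(c,d))^{-1}$ is $M_n(\mathcal M)$-valued analytic on $\mathbb C\setminus\sigma(P(c,d))$, so $F(z)=E_{\mathcal N}[(ze_{1,1}\otimes 1-L(c,d))^{-1}]$ is $\mathcal N$-valued analytic on the same set. Let $V\subset\mathbb C\setminus\sigma(P(c,d))$ be the open set where $F(z)$ is invertible in $\mathcal N$, and let $V_0$ be its connected component containing $\{|z|>k\}$. On $\{|z|>k\}$, Lemma \ref{large |z|} yields $F(z)^{-1}=u(z)\otimes 1-\gamma_1\otimes c$ with $u(z)\in M_n(\mathbb C)$, so the $\mathcal N$-valued analytic map $\mathcal E(z):=F(z)^{-1}+\gamma_1\otimes c$ takes values in the closed subspace $M_n(\mathbb C)\otimes 1\subset\mathcal N$ there. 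Composing $\mathcal E$ with the bounded quotient map $\mathcal N\to\mathcal N/(M_n(\mathbb C)\otimes 1)$ produces an analytic map into a Banach space vanishing on $\{|z|>k\}$; by the identity principle for Banach-valued analytic functions it vanishes on all of $V_0$, so $\mathcal E(V_0)\subset M_n(\mathbb C)\otimes 1$ and $u$ extends analytically to $V_0$ with values in $M_n(\mathbb C)$.

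To promote analyticity on $V_0$ to meromorphic extension on all of $\mathbb C\setminus\sigma(P(c,d))$, I would use the subordination identity $G_{\gamma_1\otimes c}(u(z))=E_{M_n(\mathbb C)}[F(z)]$ valid on $V_0$, where $G_{\gamma_1\otimes c}(w)=\int(w-\lambda\gamma_1)^{-1}\,d\mu_c(\lambda)$ is the $M_n(\mathbb C)$-valued Cauchy transform admitting a local analytic inverse $h\mapsto h^{-1}+\tau(c)\gamma_1+O(\|h\|)$ near $w=\infty$. Since the right-hand side is analytic throughout $\mathbb C\setminus\sigma(P(c,d))$, analytic continuation of $G_{\gamma_1\otimes c}^{-1}$ along the image path $z\mapsto E_{M_n(\mathbb C)}[F(z)]$ yields the meromorphic extension of $u$, with pole set $S:=(\mathbb C\setminus\sigma(P(c,d)))\setminus V_0$ discrete, because it is cut out in the finite-dimensional space $M_n(\mathbb C)$ by the polynomial-in-$\lambda$ equations $\det(u(z)-\lambda\gamma_1)=0$ with $\lambda\in\sigma(c)$.

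For the Moreover statement, fix $\lambda\in\sigma(c)$. On $V_0$, invertibility of $F(z)=(u(z)\otimes 1-\gamma_1\otimes c)^{-1}$ in $\mathcal N\cong M_n(L^\infty(\mu_c))$ forces the rational function $\mu\mapsto(u(z)-\mu\gamma_1)^{-1}$ to be essentially $\mu_c$-bounded on $\sigma(c)$; since $\det(u(z)-\mu\gamma_1)$ is a polynomial in $\mu$ of degree at most $n$, essential boundedness is equivalent to actual invertibility of $u(z)-\mu\gamma_1$ for every $\mu\in\sigma(c)$, so $(u(z)-\lambda\gamma_1)^{-1}$ is $M_n(\mathbb C)$-valued analytic on $V_0$. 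At a pole $z_0\in S$ of $u$, the matrix $u(z)-\lambda\gamma_1$ also blows up, so $(u(z)-\lambda\gamma_1)^{-1}\to 0$ and remains bounded in a punctured neighborhood of $z_0$, and Riemann's removable singularity theorem provides an analytic (not merely meromorphic) continuation across $z_0$. The main obstacle I anticipate is rigorously justifying the meromorphicity of $u$ in the third paragraph, namely excluding essential singularities and branch points of $G_{\gamma_1\otimes c}^{-1}$ continued along paths in $\mathbb C\setminus\sigma(P(c,d))$; this should follow from the finite-dimensional target $M_n(\mathbb C)$ combined with an analytic Fredholm argument exploiting the polynomial-determinantal structure.
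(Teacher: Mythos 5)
Your continuation of $u$ onto the component $V_0$ of the invertibility set of $F$ is fine, but the decisive step --- extending $u$ \emph{meromorphically across} $S=(\mathbb C\setminus\sigma(P(c,d)))\setminus V_0$ --- is exactly the point you leave open, and the devices you invoke do not close it. The local inverse of the matrix Cauchy transform $G_{\gamma_1\otimes c}$ exists only near infinity, and nothing controls its analytic continuation (branching, monodromy) along the path $z\mapsto E_{M_n(\mathbb C)}[F(z)]$; moreover your claim that $S$ is discrete because it is ``cut out by $\det(u(z)-\lambda\gamma_1)=0$, $\lambda\in\sigma(c)$'' is circular, since it presupposes that $u$ is already defined (at least meromorphically) at the points of $S$. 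Note also that invertibility of $F(z)$ in the infinite-dimensional algebra $\mathcal N=M_n(\mathbb C\langle c\rangle)$ is not governed by any determinant, so an analytic Fredholm argument cannot be applied to $F$ itself, and a priori $S$ need not be discrete. The paper's proof supplies precisely the missing device: choose a character $\chi$ of the commutative algebra $\mathbb C\langle c\rangle$ (with $\chi(c)=\lambda$ when needed) and apply it entrywise to get $\chi_n(F(z))$, an $M_n(\mathbb C)$-valued function analytic on all of $\mathbb C\setminus\sigma(P(c,d))$; since $\omega$ is $M_n(\mathbb C)$-valued (Theorem \ref{omega matrix valued}), applying $\chi_n$ to the subordination identity gives $(u(z)-\gamma_1\chi(c))\chi_n(F(z))=I_n$ for large $|z|$, so $\chi_n(F(z))$ is invertible there, its inverse is meromorphic by the finite-dimensional determinant remark used for Lemma \ref{meromorphic finite dim}, and $u_1(z)=\gamma_1\chi(c)+[\chi_n(F(z))]^{-1}$ is the desired meromorphic continuation of $u$. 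This is a genuinely different (and much shorter) route than inverting $F$ in $\mathcal N$ or inverting $G_{\gamma_1\otimes c}$, and it is the idea your proposal lacks.

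The ``moreover'' part of your argument is also flawed at the points of $S$: from ``$u$ has a pole at $z_0$'' you infer that $(u(z)-\lambda\gamma_1)^{-1}\to 0$ and stays bounded, but for matrix-valued meromorphic functions a pole of $u$ does not force boundedness of $(u-\lambda\gamma_1)^{-1}$; for instance a function behaving like $\mathrm{diag}\bigl((z-z_0)^{-1},\,z-z_0\bigr)$ has a pole while its inverse blows up as well, so Riemann's removable singularity theorem cannot be invoked as stated (quite apart from the fact that you do not yet know the singularities in $S$ are poles). In the paper this assertion is immediate: the identity $(u_1(z)-\gamma_1\otimes c)F(z)=I_n$, valid for large $|z|$ and hence on the whole domain of analyticity by uniqueness of analytic continuation, yields after applying $\chi_n$ with $\chi(c)=\lambda$ that $(u_1(z)-\lambda\gamma_1)\chi_n(F(z))=I_n$, so $\chi_n\circ F$ itself is the analytic extension of $(u-\lambda\gamma_1)^{-1}$ to $\mathbb C\setminus\sigma(P(c,d))$. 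Your observation that on $V_0$ essential boundedness in $M_n(L^\infty(\mu_c))$ forces invertibility of $u(z)-\mu\gamma_1$ for every $\mu\in\sigma(c)$ is correct, but it only covers $V_0$ and does not substitute for the extension across $S$.
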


\begin{proof}	
	Theorem \ref{omega matrix valued} shows that $\omega$ takes values in $M_n(\mathbb C)$. We 
have  established that the domain $\mathcal G$ of $\omega$ contains $z\otimes e_{11}-\gamma_0$ for 
sufficiently large $|z|$. Fix a character $\chi$ of the commutative $C^*$-algebra $\mathbb{C}\langle 
c\rangle$ and denote by
	$\chi_n:M_n(\mathbb{C})\langle \gamma_1\otimes c\rangle\to M_n(\mathbb{C})$ the algebra 
homomorphism obtained by applying $\chi$ to each entry. Using the notation (\ref{functia F}), we have
	$$(\omega(z\otimes e_{11}-\gamma_0)-\gamma_1\otimes \chi(c))\chi_n(F(z))=I_n$$
	for sufficiently large $|z|$. It follows immediately that the function
	$$
	u_1 (z)=\gamma_1 \otimes\chi(c)+[\chi_n(F(z))]^{-1}
	$$
	is a meromorphic continuation of $u$  to 
	$\mathbb{C}\setminus\sigma(P(c,d))$. Moreover, the equation
	\begin{equation}\label{u1}
	(u_1(z)-\gamma_1\otimes c)F(z)=I_n
	\end{equation}
	holds for large values $|z|$ and hence it holds on the entire domain of analyticity of $u_1$.  It 
follows that $\mathcal G$ contains $z\otimes e_{11}-\gamma_0$ whenever $z\in\mathbb{C}\setminus 
\sigma(P(c,d))$ is not a pole of $u_1$, and $\omega(z\otimes e_{11}-\gamma_0)=u_1(z)$ for such values 
of $z$. To verify the last assertion, choose $\chi$
	such that $\chi(c)=\lambda$ and apply $\chi_n$ to (\ref{u1}) to obtain
	$$
	(u_1(z)-\lambda\gamma_1)\chi_n(F(z))=I_n.
	$$
	Thus $\chi_n\circ F$ is an analytic extension of $(u-\lambda\gamma_1)^{-1}$ to
	$\mathbb C\setminus\sigma(P(c,d))$.
\end{proof}

\section{Main results and example}\label{re}

Fix a polynomial $P=P^*\in\mathbb C\langle X_1,X_2\rangle$ and choose, as in 
Section \ref{sec:linearization}, a linearization of $P$ of the form $ze_{1,1}-L$, where 
$L=\gamma_0\otimes 1+\gamma_1\otimes X_1+\gamma_2\otimes X_2\in 
M_n(\mathbb C\langle X_1,X_2\rangle)$. In particular, 
$\gamma_0,\gamma_1,\gamma_2\in M_n(\mathbb C)$ are selfadjoint matrices.

Suppose that $\{A_N\}_{N\in\mathbb N}$ and $\{B_N\}_{N\in\mathbb N}$
are two  sequences of selfadjoint random matrices satisfying the 
hypotheses (A1)--(A3) and (B0)--(B2) of Section \ref{model}.
As noted earlier,  the pairs $(A_N,B_N)$ 
in $M_N (\mathbb{C})$ converge almost surely in 
distribution to a pair $(a,b)$ of  freely independent selfadjoint random 
variables in a $C^*$-probability space $(\mathcal A,\tau)$ such that 
$\mu_a=\mu$ and $\mu_b=\nu$. 
By Theorem \ref{omega matrix valued}, there exists a selfadjoint open set $\mathcal G\subset M_n(\mathbb C)$, and an analytic function
$\omega\colon \mathcal G\to
 M_n(\mathbb C)$ such that 
$$
(\omega(\beta)\otimes 1-\gamma_1\otimes a)^{-1}
 = E_{M_n(\mathbb C\langle a\rangle)}\left[(\beta\otimes 1-(\gamma_1 \otimes a+
\gamma_2 \otimes  b))^{-1}\right],\quad\beta\in \mathcal G.$$
As shown in Lemma \ref{meromorphic free case},
the map  
$$
u(z)=\omega(z\otimes e_{1,1}-\gamma_0)
$$ 
is meromorphic on $\mathbb C\setminus\sigma(P(a,b))$. Define a new function 
$$u_0(z)=(u(ze_{1,1}-\gamma_0)+iI_n)^{-1}.$$ It follows from Lemma \ref{preliminary}
 that $u_0$ continues analytically to a neighbourhood of $\mathbb R\setminus\sigma(P(a,b))$. (Indeed, $u_0$ is bounded near every pole of $u$.) Define 
$$H_j(z)=\det[(\theta_j\gamma_1+i)u_0(z)-I_n],\quad j=1,\dots,p$$ and denote by $m_{j}(t)$ the order
of $t$ as a zero of $H_j(z)$ at $z=t$. Also set
$m(t)=m_1(t)+\cdots+m_p(t)$ for $t\in\mathbb R\setminus\sigma(P(a,b))$, and note that $\{t:m(t)\ne0\}$ is an isolated set in $\mathbb R\setminus\sigma(P(a,b))$.
With this notation, we are ready to state our first main result. The notation $E_{A_N}$ indicates the spectral measure of the matrix $A_N$, that is, $E_{A_N}(S)$ is the orthogonal projection onto the linear 
span of the eigenvectors of $A$ corresponding to eigenvalues in the Borel set $S$.

\begin{thm}\label{Main}  
{\rm(1)} Suppose that $t\in\mathbb R\setminus\sigma(P(a,b))$. Then there exists $\delta_0>0$ such that for every $\delta
\in(0,\delta_0)$, almost surely for large $N$, the random matrix $P(A_N,B_N)$ has exactly $m(t)$ 
eigenvalues in the interval $(t-\delta,t+\delta)$, counting multiplicity.

{\rm(2)} Suppose in addition that the spikes of $A_N$ 
{are distinct}
and $\det H_{i_0}(t)=0$. Then, for $\varepsilon$ 
small enough, almost surely 
\begin{equation}\label{vecteur}
\lim_{N\to\infty}\left\|E_{A_N}(\{\theta_i\})\left[E_{P(A_N,B_N)}((t-\varepsilon,t+\varepsilon))
-\delta_{i,i_0}\mathcal C_i(t)I_N\right]E_{A_N}(\{\theta_i\})\right\|=0,
\end{equation}
where $\mathcal C_i(t)=
\lim_{z\to t}(z-t)\left[(u(z)-\theta_i\gamma_1)^{-1}\right]_{1,1}$ is the 
residue of the meromorphic function 
$\left[(u(z)-\theta_i\gamma_1)^{-1}\right]_{1,1}$ at $z=t$.
\end{thm}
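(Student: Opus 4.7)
The plan is to reduce the polynomial problem to an operator-valued sum via linearization, and then to mimic the additive argument of \cite{BBCF} using the matrix subordination function $\omega$ from Section \ref{sec:subordination}.

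First, by Lemma \ref{o lema} the eigenvalues of $P(A_N,B_N)$ near $t$ coincide, with multiplicities, with the zeros in $z$ of the determinant of
$$M_N(z):=ze_{1,1}\otimes I_N-\gamma_0\otimes I_N-\gamma_1\otimes A_N-\gamma_2\otimes B_N,$$
and Lemma \ref{alta lema} guarantees that the distance from $0$ to $\sigma(M_N(z))$ is controlled by the distance from $z$ to $\sigma(P(A_N,B_N))$. Fix $\theta^{*}\in\operatorname{supp}(\mu)$ and let $\widetilde A_N$ denote $A_N$ with each spike $\theta_i$ replaced by $\theta^{*}$. By Lemma \ref{fortdiag}, Theorem \ref{stronguni}, and Remark \ref{rem2.1}, the pair $(\widetilde A_N,B_N)$ converges strongly almost surely in distribution to a free pair $(a,b)$ with $\mu_a=\mu$ and $\mu_b=\nu$. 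Writing $\widetilde M_N(z)=ze_{1,1}\otimes I_N-L(\widetilde A_N,B_N)$ and $P_i^{(N)}=E_{A_N}(\{\theta_i\})$, we obtain the finite-rank identity
$$M_N(z)=\widetilde M_N(z)-\gamma_1\otimes\sum_{i=1}^p(\theta_i-\theta^{*})P_i^{(N)},$$
and, via Proposition \ref{Camille} combined with Lemma \ref{alta lema}, $\widetilde M_N(z)^{-1}$ is uniformly bounded on a neighbourhood of any $t\in\mathbb R\setminus\sigma(P(a,b))$ for $N$ large, almost surely.

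The crux is to show that almost surely, uniformly on a small disc around $t$,
$$(I_n\otimes P_i^{(N)})\widetilde M_N(z)^{-1}(I_n\otimes P_j^{(N)})=\delta_{ij}\bigl(\omega(ze_{1,1}-\gamma_0)-\theta^{*}\gamma_1\bigr)^{-1}\otimes P_i^{(N)}+o(1)$$
in operator norm, where $\omega$ is the subordination function for $\gamma_1\otimes a$ and $\gamma_2\otimes b$ over $M_n(\mathbb C)$ furnished by Theorem \ref{omega matrix valued} (which applies thanks to Proposition \ref{fr}). Granted this approximation, the Woodbury/Schur identity applied to the rank-$p$ perturbation shows that the singularities of $M_N(z)$ near $t$ are governed by the finite determinant
$$\prod_{i=1}^p\det\bigl[I_n-(\theta_i-\theta^{*})\gamma_1(\omega(ze_{1,1}-\gamma_0)-\theta^{*}\gamma_1)^{-1}\bigr]^{\dim P_i^{(N)}},$$
which simplifies to $\prod_i\det(u(z)-\theta_i\gamma_1)^{\dim P_i^{(N)}}$ up to non-vanishing factors; under the substitution $u_0(z)=(u(z)+iI_n)^{-1}$ this is $\prod_i H_i(z)^{\dim P_i^{(N)}}$. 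Rouch\'e's theorem on a small contour around $t$ then yields exactly $m(t)$ eigenvalues of $P(A_N,B_N)$ in $(t-\delta,t+\delta)$, which proves (1).

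For (2), I would extract the spectral projection $E_{P(A_N,B_N)}((t-\varepsilon,t+\varepsilon))$ as a contour integral of the $(1,1)$-block of $M_N(z)^{-1}$. Expanding $M_N^{-1}$ in terms of $\widetilde M_N^{-1}$ by Woodbury and applying the subordination asymptotics, the dominant residue at $z=t$ comes from the $(i_0,i_0)$ block because, by distinctness of the spikes, $(u(z)-\theta_i\gamma_1)^{-1}$ is analytic at $z=t$ for $i\neq i_0$; sandwiching by $E_{A_N}(\{\theta_i\})$ and reading off the $(1,1)$ entry yields precisely $\mathcal C_i(t)\delta_{i,i_0}$ as in \eqref{vecteur}. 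The main obstacle is the operator-norm convergence in the display above: it must hold across all $p^2$ spike blocks and uniformly in $z$ over a disc, which goes beyond strong convergence in distribution alone and requires combining Proposition \ref{Camille} with a ${\rm U}(N)$-concentration argument to propagate the conditional expectation into the random matrix setting. This is the technical heart of the argument.
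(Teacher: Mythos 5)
Your outline follows the same route as the paper: linearize via Lemma \ref{o lema}, replace the spikes of $A_N$ by a point of $\mathrm{supp}(\mu)$ to get a strongly convergent pair, write the spiked pencil as a rank-$p$ perturbation of the unspiked one, reduce by Sylvester's identity to a fixed-size determinant driven by the spike-compressed resolvent, and finish with Hurwitz/Rouch\'e. But the display you yourself call the ``crux'' --- almost sure operator-norm convergence, uniformly near $t$, of $(I_n\otimes P_i^{(N)})\widetilde M_N(z)^{-1}(I_n\otimes P_j^{(N)})$ to $\delta_{ij}(\omega(ze_{1,1}-\gamma_0)-\theta^{*}\gamma_1)^{-1}\otimes P_i^{(N)}$ --- is left as an acknowledged obstacle, and this is precisely the content that occupies the whole of Section \ref{cvF} of the paper. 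It does not follow from Proposition \ref{Camille} plus a generic ${\rm U}(N)$-concentration bound: one needs (i) unitary invariance of $B_N$ to show that $\mathbb E(R_N(\beta))$ lies in $M_n(\mathbb C)\otimes\{C_N\}''$ and hence is block diagonal with blocks $\omega_N^{(j)}$ (Lemma \ref{L2.1}); (ii) the commutator identity \eqref{New4.10} together with the Lipschitz/concentration estimates (Lemma \ref{Lip estimate}, Proposition \ref{prop.6.2}) to prove the blocks agree up to $O(1/N)$ (Proposition \ref{P}); (iii) a normal-family and moment-matching argument identifying the limit of $\omega_N$ with the subordination function $\omega$ of the free pair (Proposition \ref{omega N vs omega}); and (iv) strong convergence, the resolvent bound of Lemma \ref{alta lema}, and Montel's theorem to push the convergence from $\mathbb H^+(M_n(\mathbb C))$ down to real $z$ near $t$ (Proposition \ref{sample resolvents under strong convergence}). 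Without these steps the central limit statement, and hence part (1), is unproved; in particular the identification of the limit as the \emph{subordination} function is itself a nontrivial step, not a formality.

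Two further points. First, your passage from $\prod_i\det\bigl[I_n-(\theta_i-\theta^{*})\gamma_1(\omega(ze_{1,1}-\gamma_0)-\theta^{*}\gamma_1)^{-1}\bigr]$ to $\prod_i\det(u(z)-\theta_i\gamma_1)$ ``up to non-vanishing factors'' glosses over the fact that $u$ is only meromorphic at $t$ and that the multiplicity must be shown independent of the choice of $\theta^{*}$; this is why the paper introduces $u_0$, the functions $H_j$, and proves Lemma \ref{lem:independence on s}. Second, for part (2) your contour-integral/Woodbury route differs from the paper, which instead writes $E_{A_N}(\{\theta_j\})E_{P(A_N,B_N)}((t-\varepsilon,t+\varepsilon))=f_j(A_N)h(P(A_N,B_N))$ for smooth $f_j,h$, proves a Lipschitz-trace concentration inequality, and evaluates $\lim_N\mathbb E(g_{N,j}(U_N))$ by a Cauchy-transform estimate (via \cite[Lemma 4.1]{BBCF}) and a residue computation yielding $\mathcal C_i(t)$. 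Your route could in principle be carried out, but it rests on the same unproved uniform convergence of compressed resolvents (now needed on a contour around $t$, through a Woodbury expansion of the spiked pencil), so the gap above propagates to part (2) as well.
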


\begin{rem}\label{rmk5.2}
If we know in addition that $\omega$ is analytic at the point $\beta=te_{1,1}-\gamma_0$, then 
the function $H_j(z)$ can be replaced by $z\mapsto\det[\theta_j\gamma_1-u(z)].$
In that case, $m(t)$ is equal to the multiplicity of $t$ as a zero of 
$$
z\mapsto\prod_{j=1}^p \det[\theta_j\gamma_1-u(z)].
$$
This situation arises, for instance, when $b$ is a semicircular variable  and it is relevant when $B_N$ is 
replaced by a Wigner matrix $X_N/\sqrt{N}$. Under the hypotheses $(X0)-(X3)$ of Section \ref{model}, 
we obtain the following result. Note that the  subordination function $\omega$ has the more explicit form
$$
\omega(\beta)=\beta-\gamma_2(\mathrm{Id}_{M_n(\mathbb C)}\otimes\tau)
\left[(\beta\otimes 1_{\mathcal A}-\gamma_1\otimes a -\gamma_2\otimes b)^{-1}\right]\gamma_2,
\quad\beta\in \mathcal G.
$$
\end{rem}
\begin{thm}\label{thm8.2}
Let $a$ and 
$b$ be free selfadjoint elements  in a ${C}^*$-probability space 
$({\mathcal A},\tau)$  with distribution $\mu$ and $\nu_{0,1}$ respectively $($see \eqref{demicercle}$)$, 
$t\in\mathbb R\setminus\sigma({P(a,b)})$, and let $m(t)$ be defined as in Remark {\em \ref{rmk5.2}}. 
Then, for sufficiently small $\varepsilon$, almost surely for large $N$, there are exactly $m(t)$ 
eigenvalues of $P(A_N,{X_N}/{\sqrt{N}})$  in an $\varepsilon$-neighborhood of $t$.
\end{thm}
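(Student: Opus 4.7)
My plan is to mirror the proof of Theorem \ref{Main}, replacing the unitary invariance of $B_N$ by the strong asymptotic freeness available for Wigner matrices via Theorem \ref{stronguni}, and using the GUE case of Theorem \ref{Main} as a launching point for a universality comparison argument.

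First, I would linearize as in Section \ref{sec:linearization}: fix $L=\gamma_0\otimes 1+\gamma_1\otimes X_1+\gamma_2\otimes X_2$ and, by Lemma \ref{o lema}, reduce counting eigenvalues of $P(A_N,X_N/\sqrt{N})$ near $t$ to locating the real $z$ near $t$ for which the matrix
$$
\widetilde L_N(z):=(ze_{1,1}-\gamma_0)\otimes I_N-\gamma_1\otimes A_N-\gamma_2\otimes\frac{X_N}{\sqrt{N}}
$$
fails to be invertible, with multiplicity given by the dimension of its kernel.

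Second, I handle the bulk. After unitary conjugation, write $A_N=A_N'\oplus\mathrm{diag}(\theta_1,\dots,\theta_p)$ where $A_N'$ has no spikes. By Theorem \ref{stronguni} the pair $(A_N',X_N/\sqrt{N})$ converges strongly almost surely in distribution to a free pair $(a,x)$ with $\mu_a=\mu$, $\mu_x=\nu_{0,1}$. Proposition \ref{Camille} then controls norms of arbitrary matrix polynomials, so combined with Lemma \ref{alta lema} the $A_N'$-block of $\widetilde L_N(z)$ is uniformly invertible almost surely on compact subsets of $\mathbb C\setminus\sigma(P(a,b))$. Hence outliers of $P(A_N,X_N/\sqrt{N})$ near $t\notin\sigma(P(a,b))$ can arise only from the interaction of the Wigner block with the spike block.

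Third, I run the subordination analysis of Section \ref{sec:subordination} for $(\gamma_1\otimes a,\gamma_2\otimes b)$. Since $b$ is semicircular, Remark \ref{rmk5.2} provides an explicit expression of $\omega$ as a polynomial in the matrix-valued resolvent $F(z)$ of \eqref{functia F}; combined with Lemma \ref{meromorphic finite dim}, this shows that $u(z)=\omega(ze_{1,1}-\gamma_0)$ is in fact analytic on $\mathbb C\setminus\sigma(P(a,b))$. Consequently, the outlier count near $t$ is predicted to equal the multiplicity of $t$ as a zero of $z\mapsto\prod_{j=1}^p\det[\theta_j\gamma_1-u(z)]$, which is $m(t)$.

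Fourth, and here lies the main obstacle: converting this deterministic picture into an almost-sure eigenvalue count. When $X_N$ is GUE, $X_N/\sqrt{N}$ is unitarily invariant, so Theorem \ref{Main}(1) applies directly and yields the statement in the Gaussian case. To pass from GUE to an arbitrary Wigner $X_N$ satisfying (X0)--(X3), I would employ a Lindeberg-type swap in the spirit of \cite{CDMFF}: represent the number of eigenvalues of $P(A_N,X_N/\sqrt{N})$ in $(t-\varepsilon,t+\varepsilon)$ by a smoothed contour integral of a suitable polynomial in $\widetilde L_N(z)^{-1}$ paired against the spike block of $A_N$, then interpolate entry-by-entry between $X_N$ and an independent GUE matrix. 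The third-moment bound (X3) together with truncation afforded by (X2) control the Lindeberg error, provided one establishes sufficiently strong isotropic resolvent estimates on $\widetilde L_N(z)^{-1}$, especially on matrix elements indexed by spike eigenvectors of $A_N$. Securing these isotropic bounds on the linearized resolvent is the principal technical hurdle; once in hand, comparison with the GUE case delivers Theorem \ref{thm8.2}.
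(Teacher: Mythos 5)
Your high-level skeleton (prove the GUE case through the unitarily invariant theorem, then compare a general Wigner matrix with the GUE) is indeed the paper's strategy, but there is a genuine gap exactly at the step you yourself flag as the ``principal technical hurdle'': the comparison is never carried out, and the route you propose is not supported by the paper's toolkit. An entry-by-entry Lindeberg swap applied to an eigenvalue count, backed by isotropic resolvent estimates for the linearized resolvent $\widetilde L_N(z)^{-1}$ at spike eigenvectors, is not available under (X1)--(X3) and would amount to proving an isotropic local law for polynomials in deformed Wigner matrices --- a substantial result neither contained in nor needed by the paper. What the paper actually does is: (i) prove the GUE statement at the level of the fixed-size corner block $\mathcal F^g_N(\beta)=(I_n\otimes P_NV_N^*)R^g_N(\beta)(I_n\otimes V_NP_N^*)$ by conditioning on the (unitarily invariant) eigenvector/eigenvalue decomposition and invoking the unitarily invariant results (Proposition \ref{gaussien}); (ii) replace $X_N$ by the approximating matrix $X_N(\varepsilon)$ of \cite{BC}, which has all moments bounded and satisfies a Poincar\'e inequality, at the cost of an error $\varepsilon\|\gamma_2\|\|(\Im\beta)^{-1}\|^2$; (iii) compare \emph{expectations} of these blocks along the interpolation $\cos\alpha\,X_N(\varepsilon)+\sin\alpha\,X^g_N$ via the cumulant expansion of Lemma \ref{IPP}, yielding an $O(1/\sqrt N)$ bound (Proposition \ref{comptilde}); and (iv) recover almost-sure convergence by concentration under the Poincar\'e inequality (Lemma \ref{Herbst}). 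Your ``third moments plus truncation'' remark does not substitute for steps (ii)--(iv).

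Separately, even granting convergence of the resolvent block for $\beta$ in the operator upper half-plane, your outline does not convert it into an exact eigenvalue count near the real point $t$. Your direct-sum decomposition $A_N=A_N'\oplus\mathrm{diag}(\theta_1,\dots,\theta_p)$ does not induce a block structure on $\widetilde L_N(z)$, since $X_N$ couples the two summands, so the claim that the ``$A_N'$-block is uniformly invertible'' does not by itself localize the outliers. The paper instead replaces the spikes by a point $s\in\mathrm{supp}(\mu)$, so that $\Delta_N=A_N-C_N$ has rank $p$, and uses Sylvester's identity to reduce the count to the zeros of the fixed-size determinant $F_N(ze_{1,1}-\gamma_0)=\det\bigl(I_n\otimes I_p-(\gamma_1\otimes T)\mathcal F_N(ze_{1,1}-\gamma_0)\bigr)$. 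It then uses the strong convergence of $(C_N,X_N/\sqrt N)$ (Theorem \ref{stronguni}, Proposition \ref{Camille}) together with Lemma \ref{alta lema} to obtain a uniform bound on $\mathcal F_N$ on a complex neighborhood of $te_{1,1}-\gamma_0$, upgrades the half-plane convergence to that neighborhood by Vitali--Montel, and concludes with Hurwitz's theorem and the factorization $\det\bigl(\omega(ze_{1,1}-\gamma_0)\otimes I_p-\gamma_1\otimes T\bigr)=\prod_{j=1}^p\det\bigl(\omega(ze_{1,1}-\gamma_0)-\theta_j\gamma_1\bigr)$, which is what identifies the count with $m(t)$ of Remark \ref{rmk5.2}. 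A ``smoothed contour integral'' of an integer-valued counting function is not a functional to which a swap argument applies directly, so this final normal-family/Hurwitz step also needs to be supplied.
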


\begin{rem}\label{calcul}
The subordination function can be calculated more explicitly if $\mu=\delta_0$ (and hence $a=0$). 
In this case, 
$$ 
\omega(\beta)=
\left\{({\rm Id}_{M_n(\mathbb C)}\otimes\tau)
\left[(w\otimes1-\gamma_2\otimes b)^{-1}\right]\right\}^{-1}.
$$
\end{rem}

As an illustration, consider the random matrix 
$$
M=A_N \frac{X_N}{\sqrt{N}} +\frac{X_N}{\sqrt{N}}A_N +\frac{X_N^2}{N},
$$
where $X_N$ is a standard G.U.E. matrix of size $N$ 
(thus, each entry of $X_N$ has unit norm in $L^2(\Omega)$) and 
$$
A_N=\text{Diag}(\theta, 0,\ldots,0),\quad \theta\in\mathbb{R}\setminus\{0\}. 
$$
In this case, $A_N$ has rank one, and thus $\mu=\delta_0$. It follows that the limit spectral 
measure $\rho$ of $M$ is the same as the limit spectral measure of
$X_N^2/{N}$. Thus, $\eta$ is the Marchenko-Pastur distribution $\rho$ with parameter 1:
$$ 
{\rm d}\eta(x) =\frac{\sqrt{\left(4-x\right)x}}{2\pi  x}1_{(0,4)}(x)\,{\rm d}x.
$$
The polynomial $P$ is $P(X_1,X_2) =X_1X_2 +X_2X_1 + X_2^2$, $\mu=\delta_0$, and $\nu$ is 
the standard semi-circular distribution. An economical linearization of $P$ is provided by  
$L=\gamma_0 \otimes 1 + \gamma_1 \otimes X_1+\gamma_2\otimes X_2$,  where
$$
\gamma_0= 
\begin{bmatrix} 0 & 0&0\\
0& 0 & -1 \\
0&-1&0
\end{bmatrix},\quad \gamma_1=
\begin{bmatrix} 
0 & 0&1\\
0& 0 & 0\\
1&0&0
\end{bmatrix},\quad\gamma_2=
 \begin{bmatrix} 
0 & 1&\frac{1}{2}\\
1& 0 & 0\\ 
\frac{1}{2}&0&0
\end{bmatrix}.
$$
Denote by 
$$
G_\eta(z)=\int_0^4\frac1{z-t}\,{\rm d}\eta(t)=\frac{z-\sqrt{z^2-4z}}{2z},\quad z\in\mathbb C
\setminus[0,4]
$$
the Cauchy transform of the measure $\eta$. 
{(The branch of the square root is chosen so $\sqrt{z^2-4z}>0$ for $z>4$.)}
This function satisfies the 
quadratic equation $zG_\eta(z)^2-zG_\eta(z)+1=0$. Suppose now that $x\notin [0,4]$. Denoting by 
$E={\rm Id}_{M_3(\mathbb C)}\otimes\tau\colon M_3(\mathcal A)\to M_3(\mathbb C)$ 
the usual expectation and using Remark \ref{calcul}, we have
$$
\omega(xe_{1,1}-\gamma_0)=
E\left[(xe_{1,1}-\gamma_0-\gamma_2\otimes b)^{-1}\right]^{-1},
\quad x\in\mathbb R\setminus[0,4].
$$
The inverse of $(xe_{1,1}-\gamma_0)\otimes1-\gamma_2\otimes b$ is then calculated 
explicitly and application of the expected value to its entries yields eventually
$$
\omega(xe_{11}-\gamma_0)= 
\begin{bmatrix} 
\frac{1}{G_\eta (x)} &0&0 \\
0 &\frac{1}{xG_\eta (x)}-1 & \frac{1}{2x G_\eta (x)} +\frac{1}{2}\\
0&\frac{1}{2xG_\eta (x)}+\frac{1}{2}&\frac{1}{4xG_\eta (x)} -\frac{1}{4} 
\end{bmatrix}.
$$ 
After calculation, the equation $\det[\gamma_1\theta-\omega(xe_{11}-\gamma_0)]=0$ 
reduces to 
\begin{equation} 
\theta^2 G_\eta (x)^2 -(1-G_\eta (x))=0.\label{equation}
\end{equation}
This equation has  two solutions, namely 
$$
\frac{2\theta^4}{-(3\theta^2+1)\pm\sqrt{4\theta^2+1} (\theta^2 +1)},
$$ 
one of which is negative. The positive solution belongs to $[4,+\infty )$ precisely when 
$|\theta|>\sqrt 2$. Thus, the matrix $M_N$ exhibits one (negative) outlier when 
$0<|\theta|\le\sqrt 2$ and two outliers (one negative and one $>4$)  when 
$|\theta|>\sqrt 2$.  The second situation  is illustrated by the simulation presented in Figure \ref{figgy}.
\begin{figure}
\centering
\includegraphics[width=0.99\textwidth]{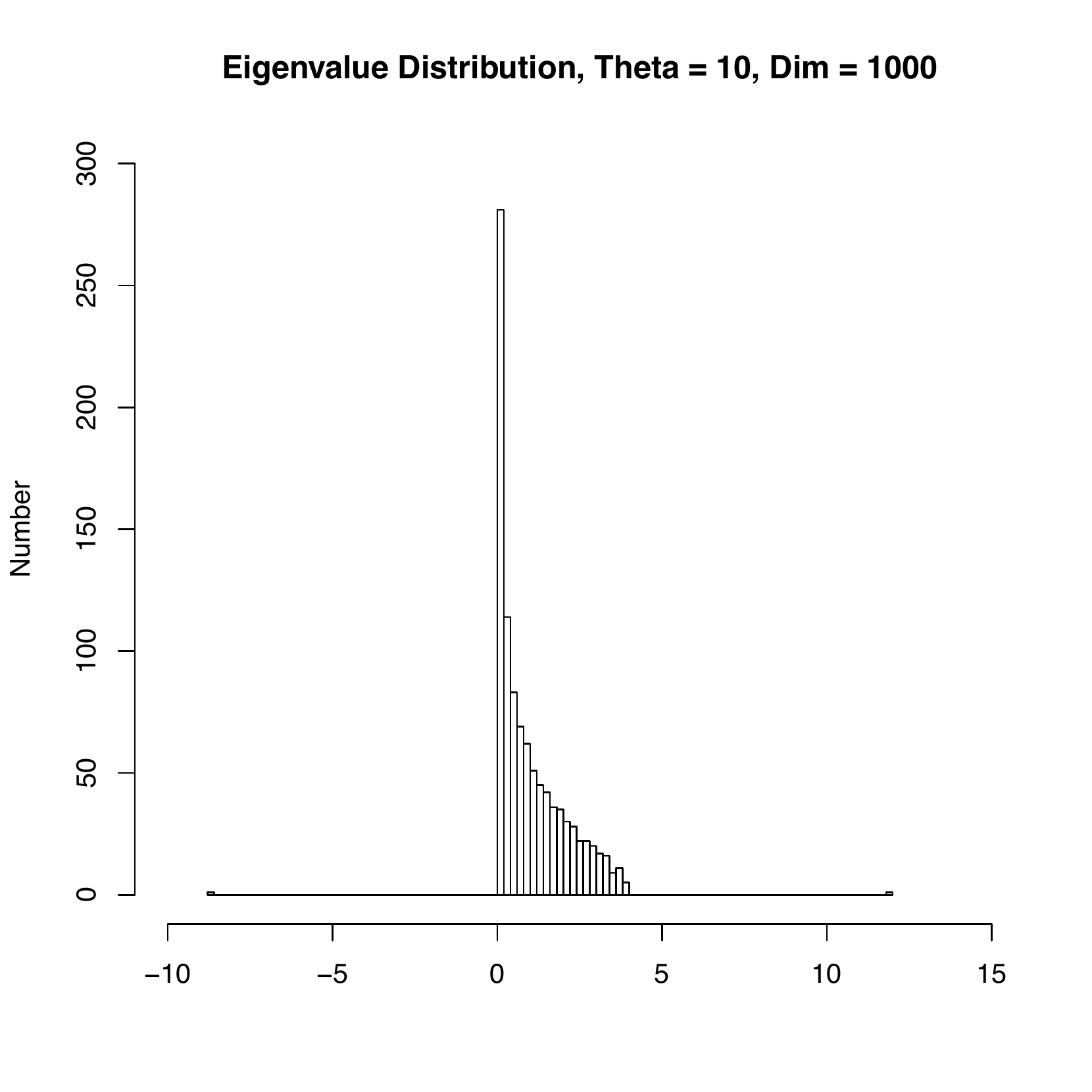} 
\caption{One sample from the model described in remark 
\ref{calcul} corresponding to $\theta=10$, with matrix size 
$N=1000$.}\label{figgy}
\end{figure}

\section{{Outline} of the proofs}\label{sp}
We consider first the matricial model (\ref{uni}), that is, $Z_N=P(A_N,B_N)$, where $A_N$ and $B_N$ are 
independent and the distribution of $B_N$ is invariant under unitary conjugation. As seen in 
\cite[Proposition 6.1]{CM}, $B_N$ can be written as $B_N = U_ND_N U_N^*$ almost surely, where
$U_N$ is distributed according to the Haar measure on the unitary group ${\rm U}(N)$,
$D_N$ is a diagonal random matrix, and
$U_N$ is independent from $D_N$. As pointed out in \cite[Section 3, Assertion 2]{M}, it suffices to prove 
Theorem \ref{Main} under the assumption that $A_N$ and $D_N$ are constant selfadjoint matrices that 
can be taken to be diagonal in the standard basis. Thus, we work with
$$
A_N=\text{Diag}(\lambda_1(A_N),\dots,\lambda_N(A_N))
$$
and
$$
B_N=U_ND_NU_N^*,\quad D_N=\text{Diag}
(\lambda_1(D_N),\dots,\lambda_N(D_N))
$$
where $\lambda_j(A_N)=\theta_j,1\le j\le p$, 
and $U_N$ is uniformly 
distributed in ${\rm U}(N)$.

Similarly, the proof for the second model $Z_N=P(A_N,X_N/\sqrt{N})$ reduces to the special case in which 
$A_N$ is a constant matrix.
 
Choose a linearization $L$ of $P$ as in Section \ref{sec:linearization}.  In the spirit of \cite{BGR}, the first 
step in the proofs of Theorems \ref{Main} and \ref{thm8.2} consists of reducing the problem 
to the convergence of random matrix function $F_N$ of fixed size $np$, 
involving the generalized resolvent of the linearization applied to $Z_N$.
For the first model, this convergence is established in Section \ref{cvF} by extending 
the arguments of \cite{BBCF} and making use of the properties of the
operator-valued subordination function described in Section \ref{sec:subordination}. 
For the second model, the convergence 
of $F_N$ is obtained in Section \ref{cvFw} via a comparison with the G.U.E. case.
The case in which $X_N$ is a G.U.E. is, of course,
a particular case of the unitarily invariant model.

\section{Expectations of matrix-valued random analytic maps}\label{cvF}
As seen earlier in this paper, it suffices to prove Theorem \ref{Main} in the special case
in which the matrix $A_N$ is constant and $B_N$ is a random unitary conjugate of another constant 
matrix. In this section, we establish some useful ingredients specific to this situation. We fix sequences 
$\{C_N\}_{N\in\mathbb N}$ and
$\{D_N\}_{N\in\mathbb N}$, where $C_N,D_N\in M_N(\mathbb C)$, and a sequence 
$\{U_N\}_{N\in\mathbb N}$ of random matrices such that $U_N$ is uniformly distributed in the unitary 
group $\mathrm{U}(N)$. We also fix a selfadjoint polynomial
$P\in\mathbb C\langle X_1,X_2\rangle$ and a selfadjoint linearization
$$L=\gamma_0\otimes 1+\gamma_1\otimes X_1+\gamma_2\otimes X_2\in 
M_n(\mathbb C\langle X_1,X_2\rangle)$$
of $P$ as in Section \ref{sec:linearization}.
The random variables $c_N=C_N\otimes 1_\Omega$ and $d_N=U_ND_NU_N^*$ are viewed
as elements of the noncommutative probability
space $(\mathcal M_N,\tau_N)$, where
$\mathcal M_N=M_N(\mathbb C)\otimes L^\infty(\Omega)$
and $\tau_N=\mathrm{tr}_N\otimes\mathbb E$. 
For every $N\in\mathbb N$ we consider the elements
$\gamma_1\otimes c_N,\gamma_2\otimes d_N\in M_n(\mathbb C)\otimes\mathcal M_N$, and the 
algebras $\mathcal B_N=M_n(\mathbb C)\otimes I_N$ 
and $\mathcal N_N=M_n(\mathbb C)\otimes M_N(\mathbb C)$, both identified with subalgebras of
$M_n(\mathbb C)\otimes\mathcal M_N$. The conditional expectation 
$E_{\mathcal N_N}:M_n(\mathbb C)\otimes\mathcal M_N\to\mathcal N_N$
is simply the expected value and it is accordingly denoted $\mathbb E$.
We use the notation
$$
\mathcal G_N=\mathcal G_{\gamma_1\otimes c_N,\gamma_2 \otimes d_N,\mathcal B_N,\mathcal N_N},
\quad\omega_N=\omega_{\gamma_1\otimes c_N,\gamma_2 \otimes d_N,\mathcal B_N,\mathcal N_N}.
$$
Recall that $\mathcal G_N$ consists of those matrices $\beta\in M_n(\mathbb C)$ with the property that 
$\beta-\gamma_1 \otimes c_N-\gamma_2\otimes d_N$ is invertible in 
$M_n(\mathbb C)\otimes\mathcal M_N$ and
$\mathbb E((\beta-\gamma_1 \otimes c_N-\gamma_2\otimes d_N)^{-1})$ is invertible in 
$\mathcal N_N$. In particular, if $\beta\in\mathcal G_N$ then the matrix $\beta-\gamma_1 \otimes 
C_N-\gamma_2\otimes VD_NV^*$ is invertible for every $V\in{\mathrm U}(N)$. The set $\mathcal G_N$ 
is open and it contains $\mathbb H^+(M_n(\mathbb C))$.  According to Lemma 
\ref{meromorphic finite dim} 
and the remarks following it, the function $u_N(z)=\omega_N(ze_{1,1}-\gamma_0)$ is meromorphic
in $\mathbb C\setminus\sigma(P(c_N,d_N))$.

For simplicity of notation, we write
\begin{equation}
\label{RN}
R_N(\beta)=(\beta-\gamma_1\otimes c_N-\gamma_2 \otimes d_N)^{-1},\quad\beta\in\mathcal G_N,
\end{equation}
and observe that $R_N(\beta)$ is an element of
$M_n(\mathbb C)\otimes M_N(\mathbb C)\otimes L^\infty(\Omega)$, that is, a random matrix of size 
$nN$. We also write
\begin{equation}\label{sample RN}
R_N(V,\beta)=(\beta-\gamma_1\otimes C_N-\gamma_2 \otimes VD_NV^*)^{-1},
\quad\beta\in\mathcal G_N,V\in{\mathrm U}(N), 
\end{equation}
for sample values of this random variable. The function $\omega_N$ is given by
\begin{equation}\label{omegaN}
\omega_N(\beta)=\gamma_1\otimes C_N
+(\mathbb E(R_N(\beta)))^{-1},\quad\beta\in\mathcal G_N. 
\end{equation}

We start by showing that the matrix $\omega_N(\beta)$
has a block diagonal form, thus extending \cite[Lemma 4.7]{BBCF}.
We recall that the commutant and double commutant of a set $S\subset M_m (\mathbb C)$
are denoted by $S'$ and $S''$, respectively.
We use the fact that  $M_n(\mathbb C)\otimes S''=(I_n\otimes S')'$. If $S=\{C_N\}$ then $\{C_N\}''$ is the 
linear span of the matrices $\{I_N,C_N,\dots,C_N^{N-1}\}$.  In particular, every eigenvector of $C_N$ is a 
common eigenvector for the elements of $\{C_N\}''$.

For each $N\in\mathbb N$, we select an eigenbasis $\{f_N^{(1)},\dots,f_N^{(N)}\}$ for the operator $C_N$ 
and denote by $\lambda_N^{(j)}$ the corresponding eigenvalues, that is,
$C_Nf_N^{(j)}=\lambda_N^{(j)}f_N^{(j)}$. We write 
$P_N^{(j)}=f_N^{(j)}\otimes f_N^{(j)*}\in M_N(\mathbb C)$ for the orthogonal projection onto the space 
generated by $f_N^{(j)}$, $j=1,\dots,N$. Thus, the double commutant $\{C_N\}''$ is contained in the 
linear span of $\{P_N^{(j)}:j=1,\dots,n\}$.

We write ${\textbf{[}}x,y{\textbf{]}}=xy-yx$ for the commutator of two elements $x,y$ in an algebra.

\begin{lemma}\label{L2.1}
For every $\beta\in \mathcal G_N$ we have:  
\begin{itemize}
\item[(1)] $\mathbb E(R_{N}(\beta))\in M_n(\mathbb C)\otimes\{C_N\}''$. In particular, there exist analytic 
functions $\omega_N^{(j)}:\mathcal G_N\to M_n(\mathbb C)$, $j=1,\dots,N$, such that
$$
\omega_N(\beta)=\sum_{j=1}^N\omega_N^{(j)}(\beta)\otimes P_N^{(j)},\quad \beta\in\mathcal G _N.
$$
\item[(2)] For every $Z\in M_N(\mathbb C)$,
$$
\text{\bf[}\mathbb E(R_{N}(\beta)),I_n\otimes Z\text{\bf]}=
\mathbb E(R_{N}(\beta)\text{\bf[}\gamma_1\otimes C_N ,I_n\otimes Z\text{\bf]}R_{N}(\beta)).
$$
\end{itemize}
\end{lemma}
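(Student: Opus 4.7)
The plan is to exploit the invariance of Haar measure on $\mathrm{U}(N)$ under left translation. Part (1) will follow by translating along the subgroup of unitaries commuting with $C_N$, while part (2) will follow by differentiating the same invariance in arbitrary directions, the failure of invariance producing exactly the commutator $[\gamma_1\otimes C_N,I_n\otimes Z]$.

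For part (1), I would fix a unitary $W\in\{C_N\}'$. Conjugation by $I_n\otimes W$ fixes both $\beta$ (as $\beta\in M_n(\mathbb C)\otimes I_N$) and $\gamma_1\otimes c_N$ (since $W$ commutes with $C_N$), while it transforms $\gamma_2\otimes U_ND_NU_N^*$ into $\gamma_2\otimes(WU_N)D_N(WU_N)^*$. Since $WU_N$ has the same Haar distribution as $U_N$, the random operators $(I_n\otimes W)R_N(\beta)(I_n\otimes W^*)$ and $R_N(\beta)$ have the same distribution, and taking expectations gives $(I_n\otimes W)\mathbb E(R_N(\beta))(I_n\otimes W^*)=\mathbb E(R_N(\beta))$. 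As the unitaries span $\{C_N\}'$, we conclude $\mathbb E(R_N(\beta))\in(I_n\otimes\{C_N\}')'=M_n(\mathbb C)\otimes\{C_N\}''$. Since $\{C_N\}''$ lies in the linear span of $\{P_N^{(j)}\}_{j=1}^N$, we obtain a decomposition $\mathbb E(R_N(\beta))=\sum_{j=1}^N T_N^{(j)}(\beta)\otimes P_N^{(j)}$ with analytic coefficients $T_N^{(j)}\colon\mathcal G_N\to M_n(\mathbb C)$. This matrix is block-diagonal with respect to $\mathbb C^n\otimes\bigoplus_j\mathrm{range}(P_N^{(j)})$, and so is its inverse; combining this with $\gamma_1\otimes C_N=\sum_j\lambda_N^{(j)}\gamma_1\otimes P_N^{(j)}$ in formula \eqref{omegaN} yields the desired decomposition $\omega_N(\beta)=\sum_j\omega_N^{(j)}(\beta)\otimes P_N^{(j)}$.

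For part (2), I would drop the assumption $W\in\{C_N\}'$ and differentiate. Given a selfadjoint $Z_{sa}\in M_N(\mathbb C)$, set $W_t=\exp(itZ_{sa})$. Repeating the computation of part (1) while now keeping track of the nonzero derivative $\frac{d}{dt}|_{t=0}W_tC_NW_t^*=i[Z_{sa},C_N]$, Haar invariance under the change of variable $V\mapsto W_t^*V$ gives
$$(I_n\otimes W_t)\,\mathbb E(R_N(\beta))\,(I_n\otimes W_t^*)=\mathbb E\bigl[(\beta-\gamma_1\otimes W_tC_NW_t^*-\gamma_2\otimes U_ND_NU_N^*)^{-1}\bigr]$$
for $|t|$ small (invertibility of the right-hand side persists by openness of $\mathcal G_N$). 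Differentiating both sides at $t=0$ under the expectation (justified by dominated convergence since $R_N(V,\beta)$ is uniformly bounded on $\mathrm{U}(N)$ and smooth in $V$), and applying the resolvent derivative formula $\frac{d}{dt}(\beta-A(t))^{-1}=(\beta-A(t))^{-1}A'(t)(\beta-A(t))^{-1}$ with $A'(0)=-i[\gamma_1\otimes C_N,I_n\otimes Z_{sa}]$, I obtain
$$i\,[I_n\otimes Z_{sa},\mathbb E(R_N(\beta))]=-i\,\mathbb E\bigl[R_N(\beta)[\gamma_1\otimes C_N,I_n\otimes Z_{sa}]R_N(\beta)\bigr].$$
Rearranging yields the identity for selfadjoint $Z_{sa}$, and $\mathbb C$-linearity (writing a general $Z$ as a complex combination of its selfadjoint and anti-selfadjoint parts) extends it to all $Z\in M_N(\mathbb C)$.

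I do not anticipate any serious obstacle: the only technical point is the interchange of differentiation and expectation, which is routine given compactness of $\mathrm{U}(N)$ and uniform boundedness of $R_N(\beta)$ in a neighborhood of the identity in the $t$-parameter. The conceptual take-away is that the block structure of part (1) and the integration-by-parts formula of part (2) are two manifestations of the same Haar invariance, distinguished only by whether the infinitesimal translation lies in $\{C_N\}'$ or not.
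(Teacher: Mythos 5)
Your proof is correct, and it rests on the same underlying mechanism as the paper's --- invariance of the Haar distribution of $U_N$, differentiated --- but it is organized differently, and the comparison is worth recording. The paper proves (2) first: it considers the analytic map $W\mapsto \mathbb E\left[(\beta-\gamma_1\otimes c_N-\gamma_2\otimes e^{iW}d_Ne^{-iW})^{-1}\right]$ on a complex neighbourhood of the selfadjoint matrices, uses unitary invariance of $d_N$ plus the fact that the selfadjoint matrices form a uniqueness set to conclude that this map is locally constant, and differentiates in an arbitrary direction $Z$; this gives $\mathbb E\left(R_N(\beta)\,[I_n\otimes Z,\gamma_2\otimes d_N]\,R_N(\beta)\right)=0$, which is then converted into the stated identity via the resolvent identity $R_N(\beta)(\gamma_2\otimes d_N)=R_N(\beta)(\beta\otimes I_N-\gamma_1\otimes C_N)-I_n\otimes I_N$. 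Part (1) is afterwards deduced from (2) by taking $Z\in\{C_N\}'$. You prove (1) directly by averaging over unitaries $W\in\{C_N\}'$ (correct, since the unitaries of $\{C_N\}'$ span it and $(I_n\otimes\{C_N\}')'=M_n(\mathbb C)\otimes\{C_N\}''$), and you obtain (2) by conjugating the whole resolvent by $e^{itZ}$ and using Haar invariance to shift the conjugation onto $C_N$, so the commutator $[\gamma_1\otimes C_N,I_n\otimes Z]$ appears immediately; this avoids both the analytic-continuation step and the resolvent-identity manipulation, the passage from selfadjoint $Z$ to general $Z$ being handled by complex linearity of both sides, which replaces the uniqueness-set argument. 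Two minor points of justification: the invertibility of the matrix under the expectation on your right-hand side needs no appeal to openness of $\mathcal G_N$, since pointwise in $V\in\mathrm U(N)$ that matrix equals $(I_n\otimes W_t)R_N(W_t^*V,\beta)(I_n\otimes W_t^*)$ and hence exists for every $t$; and the domination needed to differentiate under $\mathbb E$ follows from $\sup_{V\in\mathrm U(N)}\|R_N(V,\beta)\|<+\infty$, which holds because $R_N(V,\beta)$ is defined for every $V$ (as noted after the definition of $\mathcal G_N$) and depends continuously on $V$ on the compact group $\mathrm U(N)$. With these small repairs your argument is complete and, if anything, slightly more elementary than the paper's.
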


\begin{proof}
The first assertion in (1) follows from an application of (2) to an arbitrary matrix $Z\in\{C_N\}'$ and from 
the fact that $M_n(\mathbb C)\otimes\{C_N\}''=(I_n\otimes\{C_N\}')'$. The second assertion follows 
because we also have $\gamma_1\otimes C_N\in M_n(\mathbb C)\otimes\{C_N\}''$. To prove (2), observe 
that the analytic map 
$$
H(W)=\mathbb E
\left((\beta\otimes I_N-(\gamma_1\otimes c_N+\gamma_2\otimes e^{iW}d_Ne^{-iW}))^{-1}\right)
$$  
is defined for $W$ in an open neighbourhood  of the set of selfadjoint matrices in $M_N(\mathbb C)$. 
The unitary invariance of $d_N$ implies that $H(W)=\mathbb E(R(\beta))$ if $W$ is selfadjoint. Since
the selfadjoint matrices form a uniqueness set for analytic functions, we conclude that $H$ is constant
on an open subset of $M_N(\mathbb C)$ containing the selfadjoint matrices. Given an arbitrary $Z\in 
M_N(\mathbb C)$, we conclude that the function $\varepsilon\mapsto H(\varepsilon Z)$ is defined and 
constant for $\varepsilon\in\mathbb C$ with $|\varepsilon|$ sufficiently small. Differentiate with 
respect to $\varepsilon$ and set $\varepsilon=0$, we obtain
$$
\mathbb E(R_{N}(\beta)\text{\bf[}I_n\otimes Z,(\gamma_2\otimes d_N)\text{\bf]}R_{N}(\beta))=0.
$$
The equality $$R_{N}(\beta)(\gamma_2\otimes d_N)
=R_{N}(\beta)(\beta\otimes I_N-
\gamma_1\otimes C_N)-I_n\otimes I_N,$$ applied in the relation above, yields (2).
\end{proof}

The following result is simply a reformulation of Lemma \ref{L2.1} that emphasizes the fact that the 
functions $\beta\mapsto(\omega_N^{(j)}(\beta)-\lambda_N^{(j)}\gamma_1)^{-1}$ extend holomorphically 
to the open set $\{\beta\in\mathbb M_N(\mathbb C):\beta-\gamma_1\otimes c_N-\gamma_2\otimes 
d_N\text{ is invertible}\}.$

\begin{cor}\label{RN in terms of omegaN}
	We have
	$$
	(I_n\otimes P_N^{(j)})\mathbb E(R_N(\beta))(I_n\otimes P_N^{(j)})=
	(\omega_N^{(j)}(\beta)-\lambda_N^{(j)}\gamma_1)^{-1}\otimes P_N^{(j)},\quad j=1,\dots,N,
	$$
	for every $\beta\in\mathcal G_N$ such that 
	$\omega_N{(j)}(\beta)-\lambda_N^{(j)}\gamma_1$ is invertible.
\end{cor}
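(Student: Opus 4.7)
The plan is that this corollary is essentially an algebraic reshuffling of the identity in Lemma \ref{L2.1}(1) combined with the defining equation \eqref{omegaN} for $\omega_N$; the only genuine content is that the rank-one projections $P_N^{(j)}$ commute with everything in $M_n(\mathbb{C})\otimes\{C_N\}''$, so one can invert block by block.

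First I would observe that since each $f_N^{(k)}$ is an eigenvector of $C_N$, it is a common eigenvector for every element of $\{C_N\}''$. In the orthonormal basis $\{f_N^{(1)},\dots,f_N^{(N)}\}$, every element of $\{C_N\}''$ is therefore represented by a diagonal matrix, and so commutes with $P_N^{(j)}$ (which is represented by the matrix unit $e_{jj}$). Consequently $I_n\otimes P_N^{(j)}$ commutes with every element of $M_n(\mathbb{C})\otimes\{C_N\}''$; by Lemma \ref{L2.1}(1) this includes $\mathbb E(R_N(\beta))$ for $\beta\in\mathcal G_N$.

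Next I would use \eqref{omegaN} to rewrite $(\mathbb E(R_N(\beta)))^{-1}=\omega_N(\beta)-\gamma_1\otimes C_N$, and insert the decompositions
$$\omega_N(\beta)=\sum_{k=1}^N\omega_N^{(k)}(\beta)\otimes P_N^{(k)},\qquad \gamma_1\otimes C_N=\sum_{k=1}^N\lambda_N^{(k)}\gamma_1\otimes P_N^{(k)},$$
obtaining
$$(\mathbb E(R_N(\beta)))^{-1}=\sum_{k=1}^N\bigl(\omega_N^{(k)}(\beta)-\lambda_N^{(k)}\gamma_1\bigr)\otimes P_N^{(k)}.$$
In the basis $\{f_N^{(k)}\}$ this is block diagonal, so invertibility on the right forces each block factor to be invertible (automatic for $\beta\in\mathcal G_N$), and the inverse is computed block by block:
$$\mathbb E(R_N(\beta))=\sum_{k=1}^N\bigl(\omega_N^{(k)}(\beta)-\lambda_N^{(k)}\gamma_1\bigr)^{-1}\otimes P_N^{(k)}.$$
Multiplying on both sides by $I_n\otimes P_N^{(j)}$ and using $P_N^{(j)}P_N^{(k)}=\delta_{jk}P_N^{(j)}$ isolates the $j$-th summand and gives the claimed identity. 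This is really the only computation; there is no hard step.

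The one subtle point, which corresponds to the ``reformulation'' comment preceding the corollary, is the extension of the formula to points $\beta$ outside $\mathcal G_N$: the left-hand side $(I_n\otimes P_N^{(j)})\mathbb E(R_N(\beta))(I_n\otimes P_N^{(j)})$ is analytic in $\beta$ wherever $\beta-\gamma_1\otimes c_N-\gamma_2\otimes d_N$ is invertible in $M_n(\mathbb C)\otimes\mathcal M_N$, which is a strictly larger open set than $\mathcal G_N$. The two sides of the identity agree on $\mathcal G_N$ and both extend analytically to the larger domain (the right-hand side extending whenever $\omega_N^{(j)}(\beta)-\lambda_N^{(j)}\gamma_1$ is invertible), so the identity propagates by analytic continuation. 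This is what makes the corollary, stated as written, a meaningful ``reformulation'' rather than a trivial rewriting.
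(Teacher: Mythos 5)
Your argument is correct and is essentially the paper's own route: the corollary is presented there as a direct reformulation of Lemma \ref{L2.1}(1) together with \eqref{omegaN}, which is exactly your computation — commute $I_n\otimes P_N^{(j)}$ with $\mathbb E(R_N(\beta))\in M_n(\mathbb C)\otimes\{C_N\}''$, write $(\mathbb E(R_N(\beta)))^{-1}=\omega_N(\beta)-\gamma_1\otimes C_N=\sum_k(\omega_N^{(k)}(\beta)-\lambda_N^{(k)}\gamma_1)\otimes P_N^{(k)}$, and invert blockwise. Your final paragraph on analytic continuation beyond $\mathcal G_N$ is not needed for the statement as written (it pertains to the remark preceding the corollary about the left-hand side providing the holomorphic extension of $(\omega_N^{(j)}(\beta)-\lambda_N^{(j)}\gamma_1)^{-1}$), but it is harmless.
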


 It is useful to rewrite assertion (2) of Lemma \ref{L2.1} as follows:
\begin{align}
\text{\bf[}\omega_N (\beta),I_n\otimes Z\text{\bf]}
& = (\mathbb E(R_N(\beta)))^{-1}\mathbb E\left((R_N(\beta)-\mathbb E(R_N(\beta)))\frac{}{}\right.
\nonumber\\
&\left.\frac{}{}\times(\gamma_1\otimes\text{\bf[}Z,C_N\text{\bf]})
(R_N(\beta)-\mathbb E(R_N(\beta)))\right)(\mathbb E(R_N(\beta)))^{-1}.\label{New4.10}
\end{align}
This is {analogous to}
\cite[(4.10)]{BBCF} and the derivation is practically identical. Relation (\ref{New4.10}) allows us to estimate 
the differences between the matrices $\omega_N^{(j)}(\beta)$ once we control the differences 
$R_N(\beta)-\mathbb E(R_N(\beta)) $. For this purpose, we use the concentration of measure result
in \cite[Corollary 4.4.28]{AGZ}. This requires estimating the Lipschitz constant of the map $V\mapsto
R_N(V,\beta)$ (see \eqref{sample RN}) in the Hilbert-Schmidt norm.  We use the notation $\|T\|_2=
\textrm{Tr}_m(T^*T)$ for the Hilbert-Schmidt norm of an arbitrary matrix $T\in M_m(\mathbb C)$.

\begin{lemma}\label{Lip estimate}
	Suppose that $N\in\mathbb N$ and $\beta\in \mathcal G_N$. Then
	$$
	\|R_N(V,\beta)-R_N(W,\beta)\|_2\le 
	2r^2\|\gamma_2\|_2\|D_N\|\|V-W\|_2,
	\quad V,W\in {\mathrm U}(N),
	$$
	where $r=\|R_N(\beta)\|_{M_n(\mathbb C)\otimes\mathcal M_N}$.
\end{lemma}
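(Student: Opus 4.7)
The plan is to use the resolvent identity to rewrite the difference $R_N(V,\beta)-R_N(W,\beta)$ as a product involving the two resolvents and the commutator $VD_NV^*-WD_NW^*$, and then apply the standard Hilbert--Schmidt estimates for tensor products to pull out the constants.

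More precisely, first I would set $A=\beta-\gamma_1\otimes C_N-\gamma_2\otimes VD_NV^*$ and $B=\beta-\gamma_1\otimes C_N-\gamma_2\otimes WD_NW^*$ so that $R_N(V,\beta)=A^{-1}$ and $R_N(W,\beta)=B^{-1}$. The identity $A^{-1}-B^{-1}=A^{-1}(B-A)B^{-1}$ gives
\[
R_N(V,\beta)-R_N(W,\beta)=R_N(V,\beta)\bigl[\gamma_2\otimes(VD_NV^*-WD_NW^*)\bigr]R_N(W,\beta).
\]
Now I would apply the submultiplicativity property $\|XYZ\|_2\le\|X\|\,\|Y\|_2\,\|Z\|$ for the Hilbert--Schmidt norm, together with the factorization $\|\gamma_2\otimes T\|_2=\|\gamma_2\|_2\,\|T\|_2$ for elementary tensors, to obtain
\[
\|R_N(V,\beta)-R_N(W,\beta)\|_2\le\|R_N(V,\beta)\|\,\|R_N(W,\beta)\|\,\|\gamma_2\|_2\,\|VD_NV^*-WD_NW^*\|_2.
\]
The two operator norms on the right are bounded by $r$ since $r=\|R_N(\beta)\|$ is taken in the $C^*$-algebra $M_n(\mathbb C)\otimes\mathcal M_N$, which is the essential supremum over $V\in\mathrm{U}(N)$ of the operator norm of $R_N(V,\beta)$ (and the full unitary group lies in the support of Haar measure).

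Finally, the telescoping identity $VD_NV^*-WD_NW^*=(V-W)D_NV^*+WD_N(V^*-W^*)$, combined with the unitarity of $V,W$ and the invariance $\|V^*-W^*\|_2=\|V-W\|_2$, yields
\[
\|VD_NV^*-WD_NW^*\|_2\le 2\|D_N\|\,\|V-W\|_2,
\]
where I used $\|XY\|_2\le\|X\|\,\|Y\|_2$ again. Assembling the estimates produces exactly the stated bound $2r^2\|\gamma_2\|_2\|D_N\|\|V-W\|_2$. There is no real obstacle here; the only mild subtlety is making sure one uses the correct mixed operator/Hilbert--Schmidt inequality on the tensor product $M_n(\mathbb C)\otimes M_N(\mathbb C)\cong M_{nN}(\mathbb C)$ and that the bound on $\|R_N(V,\beta)\|$ really follows from the definition of $r$ in the $C^*$-algebra $M_n(\mathbb C)\otimes\mathcal M_N$.
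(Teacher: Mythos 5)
Your proof is correct and follows essentially the same route as the paper: the resolvent identity, the telescoping decomposition $VD_NV^*-WD_NW^*=(V-W)D_NV^*+WD_N(V^*-W^*)$, and the mixed operator/Hilbert--Schmidt estimates with $\|\gamma_2\otimes T\|_2=\|\gamma_2\|_2\|T\|_2$. Your extra remark justifying $\|R_N(V,\beta)\|\le r$ for \emph{every} $V$ (essential supremum plus full support of Haar measure, together with continuity of $V\mapsto R_N(V,\beta)$, which is available since $\beta\in\mathcal G_N$ makes the resolvent defined for all $V\in\mathrm U(N)$) is a point the paper leaves implicit, and it is handled correctly.
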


\begin{proof}
A simple calculation shows that
$$
R_N(V,\beta)-R_N(W,\beta)=R_N(V,\beta)[\gamma_2\otimes(WD_NW^*-VD_NV^*)]R_N(W,\beta).
$$
Next we see that
$$
WD_NW^*-VD_NV^*=(W-V)D_NW^*+VD_N(W^*-V^*),
$$
and thus
$$
\|WD_NW^*-VD_NV^*\|_2\le 2\|D_N\|\|W-V\|_2.
$$
Use now the equality $\|T\otimes S\|_2=\|T\|_2\|S\|_2$ to deduce that 
\begin{align*}
\|R_N(V,\beta)&-R_N(W,\beta)\|_2\\
&\le
\|R_N(V,\beta)\|\|\gamma_2\|_2\|WD_NW^*-VD_NV^*\|_2 \|R_N(W,\beta)\|\\
&\le 2\|\gamma_2\|_2\|D_N\|\|R_N(V,\beta)\|
\|R_N(W,\beta)\|\|V-W\|_2.
\end{align*} 
The lemma follows from this estimate.
\end{proof}

\begin{prop}\label{prop.6.2}
	Suppose that $\sup\{\|D_N\|:N\in\mathbb N\}<+\infty$, $\beta\in \bigcap_{N\in\mathbb N}
\mathcal G_N$, and moreover,
	$$r=\sup\{\|R_N(\beta)\|_{M_n(\mathbb C)\otimes\mathcal M_N}:
	N\in\mathbb N\}<+\infty.
	$$
	Let $X_N,Y_N\in M_n\otimes M_N$ be matrices of norm one and rank uniformly bounded by 
$m\in\mathbb{N}$. Then:
	\begin{itemize}
		\item[(1)] Almost surely,
		\begin{equation}\label{concentration}
		\lim_{N\to\infty}\|X_N(R_N(\beta)-\mathbb E(R_N(\beta)))Y_N\|=0.
		\end{equation}
		\item[(2)] There exists $k>0$ such that
		\begin{equation}
		\mathbb{E}(\|X_N(R_N(\beta)-\mathbb E(R_N(\beta)))Y_N\|^2)\le\frac{k}{N}r^4, \quad 
N\in\mathbb{N}.\label{variance}
		\end{equation}
	\end{itemize}  
In particular, there exists a dense countable subset $\Lambda\subset\bigcap_{N\in\mathbb N}
\mathcal G_N$ such that almost surely, 
\eqref{concentration} holds for any $\beta\in\Lambda$.
\end{prop}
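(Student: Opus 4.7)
The key ingredients are the Lipschitz estimate of Lemma \ref{Lip estimate} together with the concentration of measure inequality for Haar-distributed unitaries (\cite[Corollary 4.4.28]{AGZ}), which says that a function $f\colon\mathrm{U}(N)\to\mathbb C$ which is Lipschitz with constant $L_0$ in the Hilbert--Schmidt metric satisfies
$$
\mathbb P\bigl(|f(U_N)-\mathbb E f(U_N)|>\delta\bigr)\le 2\exp\!\bigl(-c N\delta^2/L_0^2\bigr),\qquad \delta>0,
$$
for some absolute constant $c>0$, and in particular $\mathrm{Var}(f(U_N))\le C L_0^2/N$.

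The first step is to replace the operator norm by the Hilbert--Schmidt norm. Since $X_N$ and $Y_N$ have rank at most $m$, so does the difference $X_N(R_N(\beta)-\mathbb E R_N(\beta))Y_N$, and therefore its operator norm is bounded by its Hilbert--Schmidt norm. Writing the singular value decompositions $X_N=\sum_{k=1}^m s_k u_k v_k^*$ and $Y_N=\sum_{\ell=1}^m t_\ell \alpha_\ell \xi_\ell^*$ with $s_k,t_\ell\in[0,1]$ and orthonormal systems, one obtains the expansion
$$
X_N R_N(V,\beta) Y_N=\sum_{k,\ell=1}^m s_k t_\ell \bigl\langle R_N(V,\beta)\alpha_\ell,v_k\bigr\rangle\, u_k \xi_\ell^*,
$$
so that, by orthonormality of the $u_k$'s and of the $\xi_\ell$'s,
$$
\|X_N(R_N(V,\beta)-\mathbb E R_N(\beta)) Y_N\|_2^2
=\sum_{k,\ell=1}^m s_k^2 t_\ell^2\bigl|\langle R_N(V,\beta)\alpha_\ell,v_k\rangle-\mathbb E\langle R_N(\beta)\alpha_\ell,v_k\rangle\bigr|^2.
$$
This reduces the problem to controlling the $m^2$ scalar random variables $f_{k,\ell}(V)=\langle R_N(V,\beta)\alpha_\ell,v_k\rangle$. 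By Lemma \ref{Lip estimate} combined with the inequalities $|\langle T\alpha,v\rangle|\le\|T\|\le\|T\|_2$, each $f_{k,\ell}$ is Lipschitz on $\mathrm{U}(N)$ (in the Hilbert--Schmidt metric) with Lipschitz constant at most $L_0=2r^2\|\gamma_2\|_2\sup_N\|D_N\|$, which is finite and independent of $N$ under our hypotheses.

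Applying the AGZ concentration inequality to each $f_{k,\ell}$ gives $\mathrm{Var}(f_{k,\ell}(U_N))\le C L_0^2/N$, hence
$$
\mathbb E\bigl\|X_N(R_N(\beta)-\mathbb E R_N(\beta)) Y_N\bigr\|^2
\le \mathbb E\bigl\|X_N(R_N(\beta)-\mathbb E R_N(\beta)) Y_N\bigr\|_2^2
\le \frac{m^2 C L_0^2}{N}\le \frac{k}{N}r^4,
$$
which is assertion (2). For assertion (1), a union bound over the $m^2$ entries together with the same exponential concentration yields
$$
\mathbb P\bigl(\|X_N(R_N(\beta)-\mathbb E R_N(\beta)) Y_N\|>\delta\bigr)\le 2m^2\exp\!\bigl(-c'N\delta^2\bigr)
$$
with $c'=c/(m^2 L_0^2)$. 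The right-hand side is summable in $N$, so the Borel--Cantelli lemma gives the almost sure convergence stated in (1).

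The main (and essentially only) technical point is the passage from matrix-valued fluctuations to scalar-valued ones so that \cite[Corollary 4.4.28]{AGZ} applies; this is handled by the SVD decomposition above, exploiting the uniform rank bound on $X_N$ and $Y_N$. For the last sentence of the proposition, one chooses an arbitrary countable dense subset $\Lambda$ of the subset $\bigcap_N\mathcal G_N$ of the finite-dimensional space $M_n(\mathbb C)$, applies (1) to each $\beta\in\Lambda$ (with $X_N=Y_N=I$ or any fixed rank-bounded sequence of interest), and intersects the resulting countable family of almost sure events.
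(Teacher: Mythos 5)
Your proof is correct and takes essentially the same route as the paper's: both reduce $X_N,Y_N$ to (sums of at most $m$) rank-one operators, apply the Lipschitz estimate of Lemma \ref{Lip estimate} to the resulting scalar functions of the Haar unitary, invoke \cite[Corollary 4.4.28]{AGZ}, and deduce (1) via Borel--Cantelli and (2) by integrating the tail (equivalently, the variance bound). One small slip in your final remark: taking $X_N=Y_N=I$ is not admissible, since the identity does not have uniformly bounded rank; the countable intersection should be taken with the given fixed rank-bounded sequences $X_N,Y_N$.
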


\begin{proof}
An arbitrary operator of rank $m$ can be written as a sum of $m$ operators of rank one (with the same or 
smaller norm). Thus we may, and do, restrict ourselves to the case in which the operators $X_N $ and 
$Y_N$ are projections of rank $1$. In this case, $X_NR_N(V,\beta)Y_N$ is a scalar multiple of a fixed 
operator of rank one and Lemma \ref{Lip estimate} shows that this function satisfies a Lipschitz 
estimate. This estimate, combined with \cite[Corollary 4.4.28]{AGZ}, shows that
$$
\mathbb P(\|X_N(R_N(\beta)-\mathbb E(R_N(\beta)))Y_N\|>{\varepsilon})
\le 2\exp\Big(\frac{-N\varepsilon^2}{8r^4\|\gamma_2 \|^2\|D_N\|^2}\Big)
$$
for every $\varepsilon>0$ and every $\alpha\in(0,1/2)$. The hypothesis implies that the last denominator 
has a bound independent of $N$. Part (1) of the lemma follows from this inequality, while (2) follows from 
the formula $\mathbb E(|Z|)=\int_0 ^{+\infty}\mathbb P (|Z|>t)\,dt$, valid for arbitrary random 
variables $Z$.
\end{proof}

\begin{rem}\label{rmk6.3} While Proposition \ref{prop.6.2} was formulated for $\beta\in
\bigcap_{N\in\mathbb N}\mathcal G_N$, the  hypothesis $r<+\infty$ (and therefore the conclusion of the 
proposition) is also satisfied  in the following cases:
\begin{itemize}
\item[(1)] $\Im \beta>0$ {with} $r\le\|(\Im \beta)^{-1}\|$;
\item[(2)] $\beta=ze_{1,1}-\gamma_0$ with $z\in\mathbb C^+\cup\mathbb C^-$, by an estimate 
provided by Lemma \ref{alta lema};
\item[(3)] $\beta=xe_{1,1}-\gamma_0$ with $x\in\mathbb R$, by the same estimate provided that 
$$
|x|>\sup\{\|P(c_N,d_N)\|_{M_n(\mathbb C)\otimes\mathcal M_N}\colon N\in\mathbb N\}.
$$
\end{itemize}
\end{rem}

\begin{cor}\label{cor7.4} Under the assumptions of Proposition {\em \ref{prop.6.2}} suppose that we also 
have $\sup\{\|C_N\|:N\in\mathbb N\}<+\infty,$ and set 
$$
t=\sup\{\|P(c_N,d_N)\|_{M_n(\mathbb C)\otimes\mathcal M_N}\colon N\in\mathbb N\}.
$$
Let $K\subset\mathbb C\setminus [-t,t]$ be a compact set. Then, almost surely, the functions
$$
z\mapsto\|X_N(R_N(ze_{1,1}-\gamma_0)-\mathbb E(R_N(ze_{1,1}-\gamma_0)))Y_N\|
$$
converge to zero uniformly on $K$ as $N\to+\infty$.
\end{cor}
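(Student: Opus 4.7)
The plan is to combine the pointwise almost sure convergence provided by Proposition \ref{prop.6.2} with a normal--families argument resting on analyticity of the resolvent and a uniform-in-$z$ operator norm bound. The central object is the matrix-valued map
\[
F_N(z)=X_N\bigl(R_N(ze_{1,1}-\gamma_0)-\mathbb{E}(R_N(ze_{1,1}-\gamma_0))\bigr)Y_N,
\]
which is analytic on a common open neighborhood of $K$; once $\{F_N\}$ is shown to be uniformly bounded there, normality will do the rest.

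The first main step is the uniform resolvent bound. Since $K$ is compact in $\mathbb{C}\setminus[-t,t]$, the quantity $\delta_0=\mathrm{dist}(K,[-t,t])$ is strictly positive, and one may fix an open neighborhood $U$ of $K$ in $\mathbb{C}$ with $\mathrm{dist}(U,[-t,t])\ge\delta_0/2$. For every realization $V\in\mathrm{U}(N)$ and every $z\in U$, we have $\mathrm{dist}(z,\sigma(P(c_N,d_N)))\ge\delta_0/2$ because $\sigma(P(c_N,d_N))\subset[-t,t]$. Combined with the hypotheses $\sup_N\|C_N\|<\infty$ and $\sup_N\|D_N\|<\infty$, Lemma \ref{alta lema} yields a constant $r_K>0$, depending only on $K,L$ and those norms, such that
\[
\|R_N(V,ze_{1,1}-\gamma_0)\|\le r_K,\qquad V\in\mathrm{U}(N),\ z\in U,\ N\in\mathbb{N}.
\]
Taking expectations, $\|\mathbb{E}(R_N(ze_{1,1}-\gamma_0))\|\le r_K$ as well, so $\|F_N(z)\|\le 2r_K$ on $U$. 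Analyticity of $z\mapsto\mathbb{E}(R_N(ze_{1,1}-\gamma_0))$ on $U$ follows from Morera's theorem, swapping line and Haar integrals by Fubini with the help of the uniform bound.

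The second step is pointwise almost sure convergence on a countable dense set. Fix a countable dense subset $\Lambda_K\subset K$. For each $z_0\in\Lambda_K$, Remark \ref{rmk6.3} verifies the hypotheses of Proposition \ref{prop.6.2} at $\beta=z_0e_{1,1}-\gamma_0$, so $\|F_N(z_0)\|\to 0$ almost surely. A countable intersection of the corresponding full-probability events produces a single event $\Omega_0$ of full probability on which the convergence holds simultaneously at every $z_0\in\Lambda_K$.

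Finally I would upgrade pointwise convergence on $\Lambda_K$ to uniform convergence on $K$ by equicontinuity. The uniform bound $\|F_N(z)\|\le 2r_K$ on $U$ together with the Cauchy integral formula applied on disks of radius $\delta_0/8$ centered at points of $K$ produces a Lipschitz estimate
\[
\|F_N(z)-F_N(w)\|\le C_K|z-w|,\qquad z,w\in K,
\]
with $C_K$ depending only on $r_K$ and $\delta_0$, independent of $N$ and of the sample. Hence, on $\Omega_0$, the continuous functions $z\mapsto\|F_N(z)\|$ are uniformly equicontinuous on $K$ and vanish in the limit on a dense subset, which by Arzel\`a--Ascoli forces uniform convergence to $0$ on $K$. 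The main obstacle is the uniform resolvent bound of step one, for which Lemma \ref{alta lema} is precisely tailored; the remainder of the argument is a standard Vitali-type upgrade of pointwise to uniform convergence for locally bounded analytic functions.
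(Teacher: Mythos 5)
Your proof is correct and follows essentially the same route as the paper: pointwise almost sure convergence at a countable dense set of points (via Proposition \ref{prop.6.2} and Remark \ref{rmk6.3}), a uniform resolvent bound on a neighborhood of $K$ coming from Lemma \ref{alta lema}, and an upgrade to uniform convergence using analyticity. Your spelling out of the last step via Cauchy estimates and equicontinuity is just an explicit version of the normal-family argument the paper invokes implicitly.
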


\begin{proof}
According to Proposition \ref{prop.6.2} and Remark \ref{rmk6.3}(2) and (3), almost surely, for every 
$z\in\mathbb C\setminus[-t,t]$ such that $\Im z\in\mathbb{Q}$ and $\Re z\in \mathbb{Q}$, we have 
pointwise convergence to zero.  A second application of Remark \ref{rmk6.3} yields uniform bounds on 
$K$ for all of these functions, and this implies uniform convergence because the resolvents involved are 
analytic in $z$.
\end{proof}

We apply the concentration results just proved to operators $X_N$ and $Y_N$, of the form 
$I_n\otimes P_N^{(j)}$, where $\{P_N^{(j)}:j=1,\dots,N\}$ are the projections used in Lemma \ref{L2.1}. 
The rank of $I_n\otimes P_N^{(j)}$ is equal to $n$.

\begin{prop}\label{P}
	Suppose that 
	$\sup\{\|C_N\|+\|D_N\|:N\in\mathbb N\}<+\infty,$ and let
$\beta\in\bigcap_{N\in\mathbb N}\mathcal G_N$ be such that
$$
r=\sup\{\|R_N(\beta)\|_{M_n(\mathbb C)\otimes\mathcal M_N}\colon N\in\mathbb N\}<+\infty
$$
and
$$
r'=\sup\{\|(\mathbb E(R_N(\beta)))^{-1}\|\colon N\in\mathbb N\}<+\infty.
$$
Then $\sup_{N\in\mathbb N}N\|\omega_N(\beta)-\omega_N^{(1)}(\beta)\otimes I_N\|<+\infty $.
\end{prop}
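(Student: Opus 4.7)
The plan is to control each block $\omega_N^{(j)}(\beta)$ of the decomposition in Lemma \ref{L2.1}(1) uniformly relative to $\omega_N^{(1)}(\beta)$ by evaluating the commutator identity \eqref{New4.10} at rank-one matrices $Z = e_{j,k} := f_N^{(j)}(f_N^{(k)})^*$, the elementary matrix units in the eigenbasis of $C_N$. A direct computation using the block-diagonal form of $\omega_N(\beta)$ from Lemma \ref{L2.1}(1) gives
\begin{equation*}
[\omega_N(\beta), I_n \otimes e_{j,k}] = (\omega_N^{(j)}(\beta) - \omega_N^{(k)}(\beta)) \otimes e_{j,k},
\qquad [e_{j,k}, C_N] = (\lambda_N^{(k)} - \lambda_N^{(j)}) e_{j,k},
\end{equation*}
so \eqref{New4.10} yields
\begin{equation*}
(\omega_N^{(j)}(\beta) - \omega_N^{(k)}(\beta)) \otimes e_{j,k} = (\lambda_N^{(k)} - \lambda_N^{(j)}) (\mathbb E R_N)^{-1} \mathbb E\bigl[(R_N - \mathbb E R_N)(\gamma_1 \otimes e_{j,k})(R_N - \mathbb E R_N)\bigr] (\mathbb E R_N)^{-1}.
\end{equation*}
The scalar factor is bounded by $2\sup_N \|C_N\| < \infty$ and the outer factors by $r'$, so it remains to show that the middle expectation has operator norm $O(1/N)$.

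The heart of the proof is a bilinear-form estimate. For unit vectors $v, w \in \mathbb C^n \otimes \mathbb C^N$, I decompose $\gamma_1 \otimes e_{j,k}$ as a sum of at most $n^2$ rank-one operators $(\gamma_1)_{a,b}(e_a \otimes f_N^{(j)})(e_b \otimes f_N^{(k)})^*$, expand the scalar $\langle \mathbb E[(R_N - \mathbb E R_N)(\gamma_1 \otimes e_{j,k})(R_N - \mathbb E R_N)] v, w\rangle$ accordingly, and apply Cauchy--Schwarz to each of the $n^2$ resulting terms. This reduces the task to controlling quantities of the form $\mathbb E|\langle \xi, (R_N - \mathbb E R_N)\eta\rangle|^2$ for specific unit vectors $\xi, \eta$; but applied to the rank-one projections $P_\xi = \xi\xi^*$ and $P_\eta = \eta\eta^*$, Proposition \ref{prop.6.2}(2) gives $\mathbb E\|P_\xi(R_N - \mathbb E R_N)P_\eta\|^2 \le (k/N) r^4$, which is equivalent to $\mathbb E|\langle \xi, (R_N - \mathbb E R_N)\eta\rangle|^2 \le (k/N) r^4$. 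Summing the $n^2$ contributions, weighted by the bounded entries of $\gamma_1$, produces the desired $O(1/N)$ bound on the middle factor.

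Combining these estimates yields $\|\omega_N^{(j)}(\beta) - \omega_N^{(k)}(\beta)\| \le C/N$ for a constant $C$ independent of $N$, $j$, and $k$. Since
\begin{equation*}
\omega_N(\beta) - \omega_N^{(1)}(\beta) \otimes I_N = \sum_{j=1}^N (\omega_N^{(j)}(\beta) - \omega_N^{(1)}(\beta)) \otimes P_N^{(j)}
\end{equation*}
is supported on the mutually orthogonal subspaces $\mathbb C^n \otimes \mathrm{Range}(P_N^{(j)})$ and each block acts as an isometry of the relevant summand, the operator norm of the sum equals $\max_j \|\omega_N^{(j)}(\beta) - \omega_N^{(1)}(\beta)\| \le C/N$, and multiplying by $N$ yields the conclusion.

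The principal technical obstacle is the $O(1/N)$ bound on the middle expectation; naively, $\|R_N - \mathbb E R_N\|$ need not be small, so no straightforward norm estimate delivers decay. The crucial gain comes from the rank-one structure of $e_{j,k}$, which ensures that only finitely many specific matrix entries of $R_N - \mathbb E R_N$ enter the final estimate -- precisely the regime in which the concentration inequality of Proposition \ref{prop.6.2}(2) produces the sharp $1/N$ rate.
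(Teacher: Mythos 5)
Your proof is correct and takes essentially the same route as the paper: both evaluate the commutator identity \eqref{New4.10} at the rank-one matrix units $f_N^{(j)}(f_N^{(1)})^*$, bound the scalar factor by $2\sup_N\|C_N\|$ and the outer factors by $r'^2$, and extract the $O(1/N)$ decay of the middle expectation from the variance bound \eqref{variance} of Proposition \ref{prop.6.2}(2) via Cauchy--Schwarz. The only (harmless) difference is in bookkeeping: the paper compresses $R_N-\mathbb E(R_N)$ by the rank-$n$ projections $I_n\otimes P_N^{(j)}$ and $I_n\otimes P_N^{(1)}$, which commute with $\mathbb E(R_N)$, before invoking \eqref{variance}, whereas you test the middle expectation against arbitrary unit vectors and apply \eqref{variance} to rank-one projections after splitting $\gamma_1\otimes e_{j,k}$ into $n^2$ rank-one pieces.
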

\begin{proof} The conclusion of the propostion is equivalent to
	$$\max_{j\in\{2,\dots,N\}}\|\omega_N^{(j)}(\beta)-\omega_N^{(1)}(\beta )\|=O(1/N)$$
 	as $N\to+\infty$.
Fix $j\in\{2,\dots,N\}$ and set $Z_N=f_j\otimes f_1^*\in M_N(\mathbb C^N)$. Thus, $Z_N$ is an 
operator of rank one such that $Z_Nh=\langle h,f_1\rangle f_j$ for every $h\in\mathbb C^N$. We have
\begin{align*}
\text{\bf[}\omega_N(\beta),I_n\otimes Z_N\text{\bf]}&=
(\omega_N^{(j)}(\beta)-\omega_N^{(1)}(\beta))\otimes Z_N\\
&=(I_n\otimes P_N^{(j)})((\omega_N^{(j)}(\beta)-\omega_N^{(1)}(\beta))\otimes Z_N)(I_n\otimes P_N^{(1)})
\end{align*}
and, similarly,
$$
\text{\bf[}C_N, Z_N\text{\bf]}=(\lambda_N^{(j)}-\lambda_N^{(1)})Z_N
= P_N^{(j)}(\lambda_N^{(j)}-\lambda_N^{(1)})Z_N P_N^{(1)}.
$$
Next, we apply (\ref{New4.10}) and use the fact that $I\otimes P_N^{(j)}$ commutes with $\mathbb E 
(R_N(U_N,\beta))$. Setting $X_N=I_n\otimes P_N^{(k)}$, $Y_N=I_n\otimes P_N^{(1)}$, we obtain
\begin{align*}
(\omega_N^{(j)}(\beta)&-\omega_N^{(1)}(\beta))\otimes Z_N=
(\lambda_N^{(j)}-\lambda_N^{(1)})(\mathbb E (R_N(\beta)))^{-1}\\
&\times \mathbb E(X_N(R_N(\beta)-\mathbb E(R_N(\beta)))
X_N(\gamma_1\otimes Z_N)Y_N
(R_N(\beta)-\mathbb E(R_N(\beta)))Y_N)\\
&\times (\mathbb E (R_N(\beta)))^{-1}.
\end{align*}
Since $\|Z_N\|=1$, an application of the Cauchy-Schwarz inequality leads to the following estimate:
\begin{align*}
\|\omega_N^{(j)}(\beta)&-\omega_N^{(1)}(\beta)\|\le|\lambda_N^{(j)}-\lambda_N^{(1)}|
\|(\mathbb E (R_N(\beta)))^{-1}\|^2\|\gamma_1\|\\
&\times \mathbb E(\|X_N(R_N-\mathbb E(R_N(\beta)))X_N\|^2)^{1/2}
\mathbb E(\|Y_N(R_N-\mathbb E(R_N(\beta)))Y_N\|^2)^{1/2}.
\end{align*}
We have $|\lambda_N^{(j)}-\lambda_N^{(1)}|\le2\|C_N\|$ and the product of the last two factors above 
is estimated via (\ref{variance}) by $kr^4/N$ with $k$ independent of $N$. Thus,
$$
\|\omega_N^{(j)}(\beta)-\omega_N^{(1)}(\beta)\|\le 2k\|C_N\|r^{\prime2}\|\gamma_1\|r^4/N\le k'/N,
$$
with $k'$ independent of $j$ and $N$. The lemma follows.
\end{proof}

In the probability model we consider, the sequences $\{C_N\}_{N\in\mathbb N}$ and 
$\{D_N\}_{N\in\mathbb N}$ are uniformly bounded in norm and, in addition, the sequences 
$\{\mu_{C_N}\}_{N\in\mathbb N}$ and $\{\mu_{D_N}\}_{N\in\mathbb N}$ converge weakly to $\mu$ and 
$\nu$, respectively. We denote by $(a,b)$ a pair of free random variables in some $C^*$-probability space
$(\mathcal M,\tau)$ such that the sequence $\{(c_N,d_N)\}_{N\in\mathbb N}$ converges in distribution to 
$(a,b)$. We also set
 $$
\mathcal G=\mathcal G_{\gamma_1\otimes a,\gamma_2\otimes b,M_n(\mathbb C),M_n(\mathbb 
C\langle a\rangle)},\quad\omega=\omega_{\gamma_1\otimes a,\gamma_2\otimes b,M_n(\mathbb 
C),M_n(\mathbb C\langle a\rangle)}.
$$
In other words, $\omega$ is the usual matrix subordination function associated to the pair $(\gamma_1\otimes a,\gamma_2\otimes b)$.

 \begin{prop}\label{omega N vs omega}
 	Suppose that $\sup\{\|C_N\|+\|D_N\|:N\in\mathbb N\}<+\infty$ and that the sequence 
$\{(c_N,d_N)\}_{N\in\mathbb N}$ converges to $(a,b)$ in distribution. Let $\mathcal D\subset
\mathcal G\cap\bigcap_{n\in\mathbb N}\mathcal G_N$ be a connected open set containing $\mathbb 
H^+(M_n(\mathbb C))$ and such that the sequence of functions $\{\|\omega_N\|\}_{N\in\mathbb N}$ is 
locally uniformly bounded on $\mathcal D$. Then 
$$
\lim_{N\to\infty}\|\omega_N(\beta)-\omega(\beta)\otimes I_N\|=0,\quad\beta\in\mathcal D.
$$
\end{prop}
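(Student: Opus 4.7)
The plan is to reduce the proposition to two scalar-valued statements about the diagonal components $\omega_N^{(j)}$ of $\omega_N$, prove them first on $\mathbb H^+(M_n(\mathbb C))$ via Proposition~\ref{P} combined with a moment-matching argument, and then extend to all of $\mathcal D$ by a normal-families argument. By Lemma~\ref{L2.1}, $\omega_N(\beta)=\sum_{j=1}^N \omega_N^{(j)}(\beta)\otimes P_N^{(j)}$ with mutually orthogonal projections, so
$$
\|\omega_N(\beta)-\omega(\beta)\otimes I_N\|\le \|\omega_N^{(1)}(\beta)-\omega(\beta)\|+\max_j\|\omega_N^{(j)}(\beta)-\omega_N^{(1)}(\beta)\|.
$$
It thus suffices to prove (a) $\omega_N^{(1)}(\beta)\to\omega(\beta)$ in $M_n(\mathbb C)$, and (b) $\max_j\|\omega_N^{(j)}(\beta)-\omega_N^{(1)}(\beta)\|\to 0$.

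On $\mathbb H^+(M_n(\mathbb C))\subset\mathcal D$ the hypotheses of Proposition~\ref{P} are satisfied locally uniformly: $\|R_N(\beta)\|\le\|(\Im\beta)^{-1}\|$, and $\|(\mathbb E(R_N(\beta)))^{-1}\|=\|\omega_N(\beta)-\gamma_1\otimes C_N\|$ is bounded by assumption. Hence (b) holds on $\mathbb H^+$ with rate $O(1/N)$, and Corollary~\ref{RN in terms of omegaN} combined with (b) yields the norm-approximation
$$
\mathbb E(R_N(\beta))=(\omega_N^{(1)}(\beta)\otimes I_N-\gamma_1\otimes C_N)^{-1}+O(1/N).
$$
For each $k\ge 0$, multiplying by $C_N^k$ and applying $\mathrm{Id}_{M_n}\otimes\mathrm{tr}_N$ yields
$$
(\mathrm{Id}_{M_n}\otimes\tau_N)(c_N^k R_N(\beta))=(\mathrm{Id}_{M_n}\otimes\mathrm{tr}_N)\bigl[C_N^k(\omega_N^{(1)}(\beta)\otimes I_N-\gamma_1\otimes C_N)^{-1}\bigr]+o(1).
$$
The left-hand side converges to $(\mathrm{Id}_{M_n}\otimes\tau)(a^k R(\beta))$ by distributional convergence $(c_N,d_N)\to(a,b)$: pointwise for $\|\beta\|$ large via the Neumann series for $R_N(\beta)$, and then on all of $\mathbb H^+$ via Montel plus the identity principle (the functions are analytic and locally uniformly bounded). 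Passing to a subsequence along which $\omega_N^{(1)}(\beta)\to\omega_\infty$ and using weak convergence $\mu_{C_N}\to\mu$, the right-hand side tends to $\int\lambda^k(\omega_\infty-\lambda\gamma_1)^{-1}\,d\mu(\lambda)$. Applying $(\mathrm{Id}_{M_n}\otimes\tau)(a^k\,\cdot)$ to the subordination identity for $\omega(\beta)$ (using $\tau\circ E_{M_n(\mathbb C\langle a\rangle)}=\tau$) gives the same integral with $\omega(\beta)$ in place of $\omega_\infty$, whence
$$
\int\lambda^k\bigl[(\omega_\infty-\lambda\gamma_1)^{-1}-(\omega(\beta)-\lambda\gamma_1)^{-1}\bigr]\,d\mu(\lambda)=0,\quad k\ge 0.
$$
The bracketed integrand is continuous in $\lambda\in\mathrm{supp}(\mu)$ (which is compact), so by density of polynomials in $C(\mathrm{supp}(\mu))$ it vanishes $\mu$-almost everywhere, hence everywhere on $\mathrm{supp}(\mu)$ by continuity. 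Invertibility of both terms (guaranteed by $\Im\omega_\infty,\Im\omega(\beta)\ge\Im\beta>0$ together with $\lambda\gamma_1$ selfadjoint) then forces $\omega_\infty=\omega(\beta)$. Every subsequential limit equals $\omega(\beta)$, which establishes (a) on $\mathbb H^+$.

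To extend (a) and (b) to $\mathcal D$, note that $\{\omega_N^{(1)}\}_N$ is a family of analytic $M_n(\mathbb C)$-valued functions on $\mathcal D$ that is locally uniformly bounded since $\|\omega_N^{(1)}\|\le\|\omega_N\|$. Hence by Montel it is a normal family; any locally uniformly convergent subsequence has analytic limit, which agrees with $\omega$ on $\mathbb H^+\subset\mathcal D$, and therefore, by the identity principle on the connected open set $\mathcal D$, on all of $\mathcal D$. This yields (a) pointwise on $\mathcal D$. For (b), given $\beta_*\in\mathcal D$, choose $j_N\in\{1,\ldots,N\}$ attaining $\max_j\|\omega_N^{(j)}(\beta_*)-\omega_N^{(1)}(\beta_*)\|$; the sequence $\omega_N^{(j_N)}-\omega_N^{(1)}$ is analytic on $\mathcal D$, locally uniformly bounded, and converges to $0$ on $\mathbb H^+$ by Proposition~\ref{P}, so the same Montel and identity-principle argument forces convergence to $0$ at $\beta_*$, establishing (b). The main obstacle will be the moment-matching step in the second paragraph, converting equality of all moments $\int\lambda^k(h-\lambda\gamma_1)^{-1}\,d\mu(\lambda)$ into pointwise equality of $(h-\lambda\gamma_1)^{-1}$ on $\mathrm{supp}(\mu)$ and thence $\omega_\infty=\omega(\beta)$; once this uniqueness is in hand, the verification of Proposition~\ref{P}'s hypotheses on $\mathbb H^+$, the moment convergence via Neumann expansion, and the Montel extension to $\mathcal D$ all proceed along established routes.
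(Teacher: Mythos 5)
Your argument is correct, and its skeleton is the same as the paper's: control the spread $\max_j\|\omega_N^{(j)}(\beta)-\omega_N^{(1)}(\beta)\|$ via Proposition~\ref{P}, identify the limit of $\omega_N^{(1)}$ with $\omega$ by computing a trace of $\mathbb E(R_N(\beta))$ in two ways (once through $(\omega_N^{(1)}(\beta)\otimes I_N-\gamma_1\otimes C_N)^{-1}$, once through convergence in distribution together with the subordination identity), and propagate through the connected set $\mathcal D$ by Montel plus the identity principle. Where you genuinely differ is in the two steps the paper treats tersely. First, the paper fixes $\Im\beta$ large, matches only the single quantity $(\mathrm{Id}_{M_n(\mathbb C)}\otimes\tau)((\,\cdot\,-\gamma_1\otimes a)^{-1})$ at $\widetilde\omega(\beta)$ and $\omega(\beta)$, and asserts that this ``easily yields'' equality, i.e.\ it implicitly uses injectivity of the matrix-valued Cauchy transform in a region of large imaginary part; you instead test against all powers $C_N^k$, $a^k$, use Stone--Weierstrass to upgrade the vanishing of all moments of $(\omega_\infty-\lambda\gamma_1)^{-1}-(\omega(\beta)-\lambda\gamma_1)^{-1}$ to its vanishing at every $\lambda\in\mathrm{supp}(\mu)$, and then invert at a single support point. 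This gives a uniqueness argument valid at every $\beta\in\mathbb H^+(M_n(\mathbb C))$ with no injectivity input, at the price of needing the moment convergence for each $k$ (your phrase ``for $\|\beta\|$ large'' should read ``for $\Im\beta$ large,'' which is what makes the Neumann series converge; this is cosmetic). Second, at a general $\beta\in\mathcal D$ Proposition~\ref{P} is not directly applicable, since the stated hypotheses do not guarantee $\sup_N\|R_N(\beta)\|<\infty$ off $\mathbb H^+(M_n(\mathbb C))$; the paper applies it wholesale, leaving this point implicit, whereas you transport the decay of $\max_j\|\omega_N^{(j)}-\omega_N^{(1)}\|$ from $\mathbb H^+(M_n(\mathbb C))$ to $\mathcal D$ by applying Montel and the identity principle to the analytic, locally uniformly bounded differences $\omega_N^{(j_N)}-\omega_N^{(1)}$. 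Both deviations are sound, and they make your write-up somewhat more self-contained than the published proof.
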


\begin{proof} By hypothesis, the analytic functions $\{\omega_N^{(1)}\}_{N\in\mathbb N}$ form a normal 
family on $\mathcal D$. By Proposition \ref{P}, it suffices to prove that every subsequential limit of this 
sequence equals $\omega$. Suppose that $\{\omega_{N_k}^{(1)}\}_{k\in\mathbb N}$ converges on 
$\mathcal D$ to a function $\widetilde{\omega}$. Fix $\beta\in H^+(M_n(\mathbb C))$ such that 
$\Im\beta>\sup\{\|C_N\|+\|D_N\|:N\in\mathbb N\}$.  Then
 	\begin{align*}
 \mathbb E(R_N(\beta))&=( \omega_N^{(1)}(\beta)\otimes I_N-\gamma_1\otimes C_N+O(1/N))^{-1}\\
 	&=( \omega_N^{(1)}(\beta)\otimes I_N-\gamma_1\otimes C_N)^{-1}+O(1/N)\\
 	&=\sum_{m=0}^\infty (\omega_N^{(1)}(\beta)^{-1})(\gamma_1
 	\omega_N^{(1)}(\beta)^{-1})^m\otimes C_N^m+O(1/N),
 	\end{align*}
 	and thus
 	$$
 	(\mathrm{Id}_{M_n(\mathbb C)}\otimes\mathrm{tr}_N)(\mathbb E(R_N(\beta)))
 	=\sum_{m=0}^\infty(\omega_N^{(1)}(\beta)^{-1})(\gamma_1
 	\omega_N^{(1)}(\beta)^{-1})^m\cdot\mathrm{tr}_N(C_N^m)+O(1/N).
 	$$
 	Setting $N=N_k$, letting $k\to+\infty$, and observing that the series on the right
 	is uniformly dominated, we conclude that
 	\begin{align*} 	
 	\lim_{k\to\infty}(\mathrm{Id}_{M_n(\mathbb C)}\otimes\mathrm{tr}_{N_k})
 	(\mathbb E(R_{N_k}(\beta)))&=
 	\sum_{m=0}^\infty(\widetilde{\omega}(\beta)^{-1})(\gamma_1
 	\widetilde{\omega}(\beta)^{-1})^m\cdot\tau(a^m)\\
 &=(\mathrm{Id}_{M_n(\mathbb C)}\otimes\tau)((\widetilde{\omega}(\beta)-\gamma_1\otimes a)^{-1}).
 	\end{align*}
 	The fact that the pairs $(C_N,D_N)$ converge to $(a,b)$ implies that
 	\begin{align*}
 	\lim_{k\to\infty}(\mathrm{Id}_{M_n(\mathbb C)}\otimes\mathrm{tr}_{N_k})
 	(\mathbb E(R_{N_k}(\beta)))&=
 (\mathrm{Id}_{M_n(\mathbb C)}\otimes\tau)((\beta-\gamma_1\otimes a-\gamma_2\otimes b)^{-1})\\
 	&=(\mathrm{Id}_{M_n(\mathbb C)}\otimes\tau)((\omega(\beta)-\gamma_1\otimes a)^{-1}) 
 	\end{align*}
 	Thus, we obtain the equality
 	$$
 	(\mathrm{Id}_{M_n(\mathbb C)}\otimes\tau)((\omega(\beta)-\gamma_1\otimes a)^{-1})=
 	(\mathrm{Id}_{M_n(\mathbb C)}\otimes\tau)((\widetilde{\omega}(\beta)-\gamma_1\otimes a)^{-1}),
 	$$
 	and this easily yields $\widetilde{\omega}(\beta)=\omega(\beta)$. Since $\mathcal D$ is connected, 
we must have $\widetilde{\omega}=\omega$, thus concluding the proof.
 	 \end{proof}
 	 
Corollary  \ref{cor:local bounds for omega} implies the following result.

\begin{cor}\label{cor:omega N vs omega z-gamma c}
Suppose that $\mathfrak s:=\sup\{\|C_N\|+\|D_N\|:N\in\mathbb N\}<+\infty$ and the sequence 
$\{(c_N,d_N)\}_{N\in\mathbb N}$ converges to $(a,b)$ in distribution. Let $\mathfrak k=\max\{k,
\mathfrak s\},$ where $k$ is the constant provided by 
Lemma {\em \ref{large |z|}}. Then, for every $z\in\mathbb C\setminus[-\mathfrak k,\mathfrak k]$, we 
have $ze_{1,1}-\gamma_0\in\mathcal G\cap\bigcap_{N\in\mathbb N}\mathcal G_N$ and
$$
\lim_{N\to\infty}\|\omega_N(ze_{1,1}-\gamma_0)-\omega(ze_{1,1}-\gamma_0)\otimes I_N\|=0.
$$
\end{cor}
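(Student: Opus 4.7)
The plan is to invoke Proposition \ref{omega N vs omega} on a suitable connected open set $\mathcal D$. First I note that $(c_N,d_N)\to(a,b)$ in distribution together with the uniform bound $\|c_N\|+\|d_N\|\le\mathfrak s$ forces $\|a\|,\|b\|\le\mathfrak s$ (by passing even moments $\tau_N(c_N^{2j}),\tau_N(d_N^{2j})$ to the limit). Consequently, the threshold $k$ produced by Lemma \ref{large |z|} for each pair $(c_N,d_N)$ and for $(a,b)$ can be chosen as a single constant depending only on $L$ and $\mathfrak s$, so $\mathfrak k=\max\{k,\mathfrak s\}$ serves uniformly.

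For $z\in\mathbb C\setminus[-\mathfrak k,\mathfrak k]$ we have $|z|>k$, so Lemma \ref{large |z|} gives $ze_{1,1}-\gamma_0\in\mathcal G\cap\bigcap_{N}\mathcal G_N$, proving the first assertion. Corollary \ref{cor:local bounds for omega}, whose constants depend only on $L$, $\|c\|$ and $\|d\|$, then yields a single neighborhood $W$ of $ze_{1,1}-\gamma_0$ and a single constant $k'$ such that $W\subset\mathcal G\cap\bigcap_N\mathcal G_N$ and $\|\omega(\beta)\|,\|\omega_N(\beta)\|\le k'$ on $W$ for every $N$.

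To apply Proposition \ref{omega N vs omega}, I build a connected open $\mathcal D\subset\mathcal G\cap\bigcap_N\mathcal G_N$ that contains $\mathbb H^+(M_n(\mathbb C))$ and reaches $ze_{1,1}-\gamma_0$. Consider the ray $\beta(s)=ze_{1,1}-\gamma_0+isI_n$, $s\ge0$: for $s>0$ one has $\Im\beta(s)=sI_n>0$, so $\beta(s)\in\mathbb H^+(M_n(\mathbb C))\subset\mathcal G\cap\bigcap_N\mathcal G_N$ by Lemma \ref{preliminary}(3); and for $s$ near $0$ the point $\beta(s)$ lies in $W$. Taking a thin tubular neighborhood of $\{\beta(s):0\le s\le s_0\}$ with $s_0$ large, joined to $\mathbb H^+(M_n(\mathbb C))$, produces the desired connected open $\mathcal D$. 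Local uniform boundedness of $\{\omega_N\}$ on $\mathcal D$ holds on $W$ by the previous paragraph. On the $\mathbb H^+$ portion it is a standard Cauchy--Stieltjes estimate: for $\beta$ with $\|\beta\|\le R$ and $\Im\beta\ge\eta I$, the identity $\omega_N(\beta)=\gamma_1\otimes C_N+(\mathbb E(R_N(\beta)))^{-1}$ together with $-\Im\mathbb E(R_N(\beta))=\mathbb E(R_N(\beta)(\Im\beta)R_N(\beta)^*)\ge\eta\,\mathbb E(R_N(\beta)R_N(\beta)^*)$ and the trivial lower bound $R_N(\beta)R_N(\beta)^*\ge(\|\beta\|+\mathfrak s(\|\gamma_1\|+\|\gamma_2\|))^{-2}I$ yields a bound on $\|\omega_N(\beta)\|$ depending only on $R,\eta,\mathfrak s$ and $L$.

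Proposition \ref{omega N vs omega} applied on this $\mathcal D$ now yields $\|\omega_N(\beta)-\omega(\beta)\otimes I_N\|\to0$ pointwise on $\mathcal D$, in particular at $\beta=ze_{1,1}-\gamma_0$. The main technical step is the verification of local uniform boundedness of $\{\omega_N\}$ along the connecting ray inside $\mathbb H^+(M_n(\mathbb C))$; everything else is a direct synthesis of Lemma \ref{large |z|}, Corollary \ref{cor:local bounds for omega} and the uniformity of their constants in terms of $L$ and $\mathfrak s$.
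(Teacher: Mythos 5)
Your overall strategy is exactly the route the paper intends: feed the local bounds of Corollary \ref{cor:local bounds for omega} (whose constants depend only on $L$, $\|c\|$, $\|d\|$, hence can be taken uniformly over the pairs $(c_N,d_N)$ and $(a,b)$ once one notes, as you do, that $\|a\|,\|b\|\le\mathfrak s$) into Proposition \ref{omega N vs omega}, after checking local uniform boundedness of $\{\omega_N\}$ on the $\mathbb H^+(M_n(\mathbb C))$ part of the domain via the imaginary part of the resolvent. That boundedness estimate, and the uniformity observations, are correct.

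There is, however, a concrete error in your construction of the connecting domain $\mathcal D$: for $\beta(s)=ze_{1,1}-\gamma_0+isI_n$ one has $\Im\beta(s)=(\Im z)\,e_{1,1}+sI_n$, not $sI_n$. This is harmless when $\Im z\ge 0$ (then $\Im\beta(s)\ge sI_n>0$ for $s>0$), but when $\Im z<0$ the segment lies outside $\mathbb H^+(M_n(\mathbb C))$ for $0<s<|\Im z|$, and you have shown neither that those points belong to $\mathcal G\cap\bigcap_{N}\mathcal G_N$ nor that $\{\omega_N\}$ is bounded there; your tube is then not contained in a set to which Proposition \ref{omega N vs omega} applies, so the argument as written only proves the corollary for $\Im z\ge0$, while the statement concerns all $z\in\mathbb C\setminus[-\mathfrak k,\mathfrak k]$. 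The repair is short: by Lemma \ref{preliminary}(1)--(2) we have $\omega_N(\beta^*)=\omega_N(\beta)^*$ and $\omega(\beta^*)=\omega(\beta)^*$, and $(ze_{1,1}-\gamma_0)^*=\bar z e_{1,1}-\gamma_0$, so the case $\Im z<0$ follows from the case $\Im z>0$ by taking adjoints (the norm in the conclusion is unchanged); alternatively, connect $ze_{1,1}-\gamma_0$ to the upper half-plane through points $z(t)e_{1,1}-\gamma_0$ with $|z(t)|>\mathfrak k$, all of which are covered, with uniform bounds, by Corollary \ref{cor:local bounds for omega}. With that one-line fix your proof is complete and coincides with the paper's intended derivation.
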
 	 

The preceding results combine to yield convergence results for sample resolvents. For the following 
statement, it is useful to identify $\mathbb C^m$ with a subspace of $\mathbb C^N$ if $m<N$ and to 
denote by $\{f_1,\dots,f_N\}$ the standard basis in $\mathbb C ^N$. Thus, we have 
$f_j\in\mathbb C^m$ provided that $j\le m$.

\begin{prop}\label{sample resolvents,ordinary convergence}
	 Suppose that $\sup\{\|C_N\|+\|D_N\|:N\in\mathbb N\}<+\infty$ and that the sequence 
$\{(c_N,d_N)\}_{N\in\mathbb N}$ converges to $(a,b)$ in distribution.  Suppose also that $C_N$ is 
diagonal in the standard basis, that is, $C_Nf_j=\lambda_N^{(j)}f_j$, $j=1,\dots N$, and that 
$p\in\mathbb N$ is such that the limits $\lambda^{(j)}=\lim_{N\to\infty}\lambda_N^{(j)}$ exist for $j=1,
\dots,p$. Denote by $P_N:\mathbb C^N\to\mathbb C^p$ the orthogonal projection onto $\mathbb C^p$, 
$N\ge p$. Let $\mathcal D\subset\mathcal G\cap\bigcap_{N\in\mathbb N}\mathcal G_N$ be a connected 
open set containing $\mathbb H^+(M_n(\mathbb C))$ and such that the sequence of functions 
$\{\|\omega_N\|\}_{N\in\mathbb N}$ is locally uniformly bounded on $\mathcal D$. Then almost surely
	 \begin{equation*}
	 \lim_{N\to\infty}(I_n\otimes P_N)R_N(\beta)(I_n\otimes P_N)=
	 (\omega(\beta)\otimes I_p-\gamma_1\otimes\text{\rm diag}(\lambda^{(1)},\dots,\lambda^{(p)}))^{-1}
	 \end{equation*} 
	 in the norm topology for every $\beta\in\mathcal D$.	
\end{prop}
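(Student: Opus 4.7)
My plan is to decompose
\begin{align*}
(I_n\otimes P_N)R_N(\beta)(I_n\otimes P_N)
&= (I_n\otimes P_N)\mathbb E(R_N(\beta))(I_n\otimes P_N)\\
&\quad+(I_n\otimes P_N)[R_N(\beta)-\mathbb E(R_N(\beta))](I_n\otimes P_N),
\end{align*}
handle the deterministic and fluctuation parts separately, and then lift the resulting pointwise almost-sure convergence on a countable dense subset of $\mathcal D$ to all of $\mathcal D$ by a normal-family argument.

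For the deterministic part, the diagonality of $C_N$ gives $P_NP_N^{(j)}=P_N^{(j)}$ for $1\leq j\leq p$ and $P_NP_N^{(j)}=0$ otherwise. Combining the block-diagonal formula $\omega_N(\beta)=\sum_{j=1}^N\omega_N^{(j)}(\beta)\otimes P_N^{(j)}$ from Lemma \ref{L2.1} with the identity $(\mathbb E(R_N(\beta)))^{-1}=\omega_N(\beta)-\gamma_1\otimes C_N$ and inverting blockwise yields
\[
\mathbb E(R_N(\beta))=\sum_{j=1}^N\bigl(\omega_N^{(j)}(\beta)-\lambda_N^{(j)}\gamma_1\bigr)^{-1}\otimes P_N^{(j)},
\]
and hence
\[
(I_n\otimes P_N)\mathbb E(R_N(\beta))(I_n\otimes P_N)=\sum_{j=1}^p\bigl(\omega_N^{(j)}(\beta)-\lambda_N^{(j)}\gamma_1\bigr)^{-1}\otimes P_N^{(j)}.
\]
Proposition \ref{P} gives $\omega_N^{(j)}(\beta)-\omega_N^{(1)}(\beta)=O(1/N)$ uniformly in $j$, Proposition \ref{omega N vs omega} gives $\omega_N^{(1)}(\beta)\to\omega(\beta)$, and $\lambda_N^{(j)}\to\lambda^{(j)}$ by hypothesis for $1\leq j\leq p$. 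Once invertibility and uniform boundedness of the inverses are secured (see below), continuity of matrix inversion produces convergence in norm to $\sum_{j=1}^p(\omega(\beta)-\lambda^{(j)}\gamma_1)^{-1}\otimes P_N^{(j)}$, which is exactly $(\omega(\beta)\otimes I_p-\gamma_1\otimes\mathrm{diag}(\lambda^{(1)},\ldots,\lambda^{(p)}))^{-1}$.

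For the fluctuation part, I would apply Proposition \ref{prop.6.2}(1) to the constant-rank projections $X_N=Y_N=I_n\otimes P_N$ (rank $np$), which gives almost-sure convergence to zero provided $r=\sup_N\|R_N(\beta)\|<+\infty$. On $\mathbb H^+(M_n(\mathbb C))$ this is automatic (one has $r\leq\|(\Im\beta)^{-1}\|$), and for general $\beta\in\mathcal D$ it should be extractable from the block decomposition of $\mathbb E(R_N)$ together with the local uniform bound on $\{\|\omega_N\|\}$ and the identity $(\mathbb E(R_N(\beta)))^{-1}=\omega_N(\beta)-\gamma_1\otimes C_N$. This yields a single null event off of which the convergence holds for every $\beta$ in a countable dense subset $\Lambda\subset\mathcal D$; since both sides of the claimed equality are analytic on $\mathcal D$ with values in $M_{np}(\mathbb C)$ and locally uniformly bounded there, a Vitali-type argument lifts pointwise convergence on $\Lambda$ to convergence on all of $\mathcal D$.

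I expect the main obstacle to be the uniform invertibility and norm control of the individual blocks $\omega_N^{(j)}(\beta)-\lambda_N^{(j)}\gamma_1$ for $1\leq j\leq p$ and all sufficiently large $N$: these do not follow directly from $\beta\in\mathcal G\cap\bigcap_N\mathcal G_N$, which only controls invertibility of the full block-diagonal sum $\omega_N(\beta)-\gamma_1\otimes C_N$, but should be obtainable from the block structure of $\mathbb E(R_N(\beta))$ combined with the hypothesized local uniform boundedness of $\{\|\omega_N\|\}$ and convergence of $\omega_N^{(1)}$ to $\omega$, at the price of some careful bookkeeping to rule out each individual block becoming singular in the limit.
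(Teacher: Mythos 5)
Your proposal follows essentially the same route as the paper's own two-line proof: split $(I_n\otimes P_N)R_N(\beta)(I_n\otimes P_N)$ into its expectation plus a fluctuation, kill the fluctuation with Proposition \ref{prop.6.2}(1) applied to the bounded-rank projections $I_n\otimes P_N$, write the compressed expectation as $\sum_{j=1}^p(\omega_N^{(j)}(\beta)-\lambda_N^{(j)}\gamma_1)^{-1}\otimes P_N^{(j)}$ via Lemma \ref{L2.1} and Corollary \ref{RN in terms of omegaN}, and pass to the limit using Propositions \ref{P} and \ref{omega N vs omega} together with $\lambda_N^{(j)}\to\lambda^{(j)}$. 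Two small remarks on your added precautions: the finite-$N$ invertibility of each block $\omega_N^{(j)}(\beta)-\lambda_N^{(j)}\gamma_1$ is automatic for $\beta\in\mathcal G_N$ (a block-diagonal matrix is invertible iff every block is, and $\|(\omega_N^{(j)}(\beta)-\lambda_N^{(j)}\gamma_1)^{-1}\|\le\|\mathbb E(R_N(\beta))\|$, which also yields invertibility of the limit once that quantity is bounded), whereas your suggestion to extract $\sup_N\|R_N(\beta)\|<+\infty$ for arbitrary $\beta\in\mathcal D$ from bounds on $\omega_N$ and $\mathbb E(R_N(\beta))$ would not work (sample norms are not controlled by expectations); the paper simply relies on the $r<+\infty$ hypothesis of Proposition \ref{prop.6.2} (cf.\ Remark \ref{rmk6.3}), which holds in every subsequent application because there $\beta\in\mathbb H^+(M_n(\mathbb C))$, and it postpones any Montel/Vitali-type globalization to the next proposition, where strong convergence supplies the needed locally uniform bounds on the sample resolvents.
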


\begin{proof}
By Proposition \ref{prop.6.2}, it suffices to show that the conclusion holds with $\mathbb E(R_N(\beta))$ 
in place of $R_N(\beta)$. We observe that
	 $$
	 (I_n\otimes P_N)\mathbb E(R_N(\beta))(I_n\otimes P_N)
	 =\sum_{j=1}^p(\omega_N^{(j)}(\beta)-\lambda_N^{(j)}\gamma_1)^{-1}\otimes P_N^{(j)},
	 $$
	 so the desired conclusion follows from Proposition \ref{omega N vs omega}.
\end{proof}

We observe for use in the following result that there exists a domain $\mathcal D$ as in the above 
statement such that $ze_{1,1}-\gamma_0\in\mathcal D$ for every $z\in\mathbb C^+$.

When the convergence of $(C_N,U_ND_NU_N^*)$ to $(a,b)$ is strong, the preceding result extends beyond
$\mathbb H^+(M_n(\mathbb C))$. In this case, $\sigma(P(C_N,U_ND_NU_N^*))$ converges almost surely 
to $\sigma(P(a,b))$ and thus the sample resolvent $R_N(U_N,ze_{1,1}-\gamma_0)$ is defined almost 
surely for large $N$ even for $z\in \mathbb R\setminus\sigma(P(a,b))$. We also recall that the function 
$$
(\omega(ze_{1,1}-\gamma_0)-\lambda\gamma_1)^{-1}
$$ 
extends analytically to $\mathbb C\setminus\sigma(P(a,b))$ if $\lambda\in\sigma(a)$.  These analytic 
extensions are used in the following statement.
\begin{prop}\label{sample resolvents under strong convergence}
	Under the hypothesis of Proposition {\em\ref{sample resolvents,ordinary convergence}}, suppose
 that the pairs $\{(C_N,U_ND_NU_N^*)\}_{N\in\mathbb N}$ converge strongly to $(a,b)$. 
Then almost surely
 \begin{equation*}
 \lim_{N\to\infty}(I_n\otimes P_N)R_N(\beta)(I_n\otimes P_N)=
 (\omega(\beta)\otimes I_p-\gamma_1\otimes\text{\rm diag}(\lambda^{(1)},\dots,\lambda^{(p)}))^{-1}
 \end{equation*}
 for $\beta=ze_{1,1}-\gamma_0$, $z\in\mathbb C\setminus\sigma(a,b)$. The convergence is 
uniform on compact subsets of $\mathbb C\setminus\sigma(P(a,b))$.
\end{prop}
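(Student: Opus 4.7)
The strategy is to deduce this from Proposition \ref{sample resolvents,ordinary convergence} by a normal-families (Vitali) argument. The key new input provided by the strong convergence hypothesis is a uniform norm bound on the sample resolvent away from $\sigma(P(a,b))$, which in turn gives a normal family of analytic matrix-valued functions.

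I would first condition on the almost sure event on which $\{(c_N,d_N)\}$ converges strongly to $(a,b)$. On this event $\sup_N(\|c_N\|+\|d_N\|)<\infty$, and strong convergence applied to the polynomial $P$ yields $\sigma(P(c_N,d_N))\to\sigma(P(a,b))$ in Hausdorff distance. Fix a compact set $K\subset\mathbb C\setminus\sigma(P(a,b))$ and pick $\delta>0$ with $\mathrm{dist}(K,\sigma(P(a,b)))\ge2\delta$. For all sufficiently large $N$ we then have $\mathrm{dist}(K,\sigma(P(c_N,d_N)))\ge\delta$, so Lemma \ref{alta lema} produces a constant $M$ such that
$$
\|R_N(U_N,ze_{1,1}-\gamma_0)\|\le M,\quad z\in K,
$$
for $N$ large. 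Consequently the sample matrix-valued holomorphic maps
$$
F_N(z)=(I_n\otimes P_N)R_N(U_N,ze_{1,1}-\gamma_0)(I_n\otimes P_N)
$$
are defined on a neighborhood of $K$ for $N$ large and uniformly bounded there.

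Next, by the observation following Proposition \ref{sample resolvents,ordinary convergence} there exists a connected open set $\mathcal D$ satisfying the hypotheses of that proposition and containing $ze_{1,1}-\gamma_0$ for every $z\in\mathbb C^+$. Applying Proposition \ref{sample resolvents,ordinary convergence} on $\mathcal D$ gives, almost surely,
$$
\lim_{N\to\infty}F_N(z)=F(z):=(\omega(ze_{1,1}-\gamma_0)\otimes I_p-\gamma_1\otimes\mathrm{diag}(\lambda^{(1)},\dots,\lambda^{(p)}))^{-1},\quad z\in\mathbb C^+.
$$
Since $\sigma(P(a,b))$ is a compact subset of $\mathbb R$, the set $\mathbb C\setminus\sigma(P(a,b))$ is connected. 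The family $\{F_N\}$ takes values in the finite-dimensional space $M_n(\mathbb C)\otimes M_p(\mathbb C)$, is uniformly bounded on compact subsets of $\mathbb C\setminus\sigma(P(a,b))$ (the argument of the previous paragraph applies to any such compact), and converges pointwise on $\mathbb C^+$ to $F$; Montel compactness plus the identity principle on the connected open set $\mathbb C\setminus\sigma(P(a,b))$ (i.e., Vitali's theorem) force every subsequential limit to coincide with the analytic extension of $F|_{\mathbb C^+}$, hence with $F$ itself. This yields the claimed uniform convergence on compact subsets of $\mathbb C\setminus\sigma(P(a,b))$.

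The main obstacle is the uniform resolvent bound in the first step: this is the only place where the extra strong-convergence assumption (as opposed to mere convergence in distribution) is used, and without it one cannot upgrade pointwise convergence on $\mathbb C^+$ to uniform convergence on compact subsets reaching the real axis outside $\sigma(P(a,b))$. Once this bound is in hand, the Vitali/Montel machinery is essentially automatic because the target space is finite-dimensional.
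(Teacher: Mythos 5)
Your argument is correct and is essentially the paper's own proof: strong convergence gives Hausdorff convergence of $\sigma(P(C_N,U_ND_NU_N^*))$ to $\sigma(P(a,b))$, Lemma \ref{alta lema} then yields local uniform boundedness of the sample resolvents on compacts away from $\sigma(P(a,b))$, and Montel/Vitali combined with the pointwise limit on $\mathbb C^+$ from Proposition \ref{sample resolvents,ordinary convergence} identifies the locally uniform limit with the analytic extension of $(\omega(ze_{1,1}-\gamma_0)\otimes I_p-\gamma_1\otimes\mathrm{diag}(\lambda^{(1)},\dots,\lambda^{(p)}))^{-1}$. The only point worth stating explicitly (as the paper does) is that this extension exists because $\lambda^{(j)}\in\sigma(a)$, via Lemma \ref{meromorphic free case}.
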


\begin{proof}
	Strong convergence implies that $\lambda^{(j)}\in\sigma(a)$ for $j=1,\dots,p$, so the functions 
$(\omega(ze_{1,1}-\gamma_0)-\lambda^{(j)}\gamma_1)^{-1}$ extend analytically to $\mathbb R
\setminus\sigma(P(a,b))$. Let $\mathcal O\subset\mathbb C\setminus\sigma(P(a,b))$ be a connected 
open set containing $\{z\}\cup\mathbb C^+$ which is at a strictly positive distance from $\mathbb C
\setminus\sigma(P(a,b))$. We prove that the conclusion of the proposition holds for 
$U_N(\xi)$ provided that $\xi\in\Omega$ is such that the conclusion of Corollary \ref{cor7.4} holds and 
$\sigma(P(C_N,VD_NV^*))\subset\mathbb R\setminus\mathcal O$ for $V=U_N(\xi)$ and sufficiently large 
$N$. By hypothesis, the collection of such points $\xi\in\Omega$ has probability $1$. Lemma \ref{alta 
lema} shows that the family of functions $\|R_N(U_N(\xi),ze_{1,1}-\gamma_0)\|$ is locally uniformly 
bounded on $\mathcal O$ for large $N$. By Montel's theorem, we can conclude the proof by verifying the 
conclusions of the proposition for $\beta=ze_{1,1}-\gamma_0$ with $z\in \mathbb C^+$.  For such 
values of $z$, the result follows from Proposition \ref{sample resolvents,ordinary convergence}.		
	\end{proof}

\section{The unitarily invariant model}\label{sieben}

In this section we prove Theorem \ref{Main} under the additional condition that $A_N$ is a constant 
matrix and $B_N$ is a random unitary conjugate of another constant matrix. We may, and do, assume that 
for $N\ge p$, $A_N$ is diagonal in the standard basis $\{f_1,\dots,f_N\}$ with eigenvalues $\theta_1,
\dots,\theta_p,\lambda_N^{(p+1)},\dots\lambda_N^{(N)}$ and, as before, $B_N=U_ND_NU_N^*$. Then the 
random matrices $a_N=A_N\otimes1_\Omega$ and $d_N=U_ND_NU_N^*$ are viewed as elements of the 
noncommutative probability space $M_N(\mathbb C)\otimes L^\infty(\Omega)$. We fix free selfadjoint 
random variables $(a,b)$ in some tracial $W^*$-probability space such that the pairs $(a_N,d_N)$ 
converge in distribution to $(a,b)$ as $N\to\infty$.  This convergence is not strong because of the spikes 
$\theta_1,\dots,\theta_p$. As in \cite{BBCF}, we consider closely related pairs $(c_N,d_N)$ that do 
converge strongly to $(a,b)$. Namely, we set $c_N=C_N\otimes1_\Omega$, where $C_N$ is diagonal in 
the standard basis of $\mathbb C^N$ with eigenvalues $\lambda_N^{(1)},\dots,\lambda_N^{(N)}$ that
coincide with those of $A_N$ except that $\lambda_N^{(1)}=\cdots=\lambda_N^{(p)}=s$ is an arbitrary 
(but fixed for the remainder of this section) element of $\mathrm{supp}(\mu)$. For $N\ge p$, the 
difference $\Delta_N=A_N-C_N$ can then be written as $\Delta_N=P_N^*TP_N$, where $T\in 
M_p(\mathbb C)$ is the diagonal matrix with eigenvalues $\theta_1-s\dots,\theta_p-s$ and $P_N\colon
\mathbb C^N\to\mathbb C^p$ is the orthogonal projection.

According to Lemma \ref{o lema},  $\ker(tI_N-P(A_N,B_N))$ and $\ker(te_{1,1}\otimes I_N-L(A_N,B_N)))$ have 
the same dimension for every $t\in\mathbb R$. Setting $\beta=ze_{1,1}-\gamma_0$, strong convergence
of the pairs $c_N,d_N$ implies that almost surely the sample resolvent $R_N(U_N,\beta)$ is defined for 
sufficiently large $N$ if $z\in\mathbb C\setminus\sigma(P(c,d))$. (We continue using the notation 
introduced in (\ref{RN}), (\ref{sample RN}), and (\ref{omegaN}).) We need to consider the matrix
\begin{align*}
\beta\otimes I_N-\gamma_1\otimes& A_N-\gamma_2\otimes B_N=
\beta\otimes I_N-\gamma_1\otimes C_N-\gamma_2\otimes B_N-\gamma_1\otimes\Delta_N\\
&=(I_n \otimes I_N-(\gamma_1\otimes\Delta_N)R_N(U_N,\beta))(\beta\otimes I_N-\gamma_1\otimes 
C_N-\gamma_2\otimes B_N),
\end{align*}
since the order of $t\in\mathbb R$ as a zero of its determinant equals  $\dim\ker(tI_N-P(A_N,B_N))$, and 
hence the number of eigenvalues of $P(A_N,B_N)$ in a neighborhood $V$ of a given $t\in\mathbb R
\setminus\sigma(P(a,b))$ is the number of zeros of this determinant in $V$. Thus, we need to consider 
the zeros in $V$ of
$$
\det(I_n \otimes I_N-(\gamma_1\otimes\Delta_N)R_N(U_N,\beta)).
$$
Using Sylvester's identity ($\det(I_r-XY)=\det(I_p-YX)$ if $X$ is an $r\times p$ matrix and $Y$ is a 
$p\times r$ matrix) and the fact that $\Delta_N=P_N^*TP_N$, this determinant can be rewritten as
\begin{align*}
F_N(U_N,\beta)
=\det(I_n\otimes I_p-(\gamma_1\otimes T)(I_n\otimes P_N)R_N(U_N,\beta)(I_n\otimes P_N^*)).
\end{align*}
At this point, we observe that the hypothesis of Proposition \ref{sample resolvents under strong 
convergence} are satisfied with $\lambda^{(1)}=\cdots=\lambda^{(p)}=s$. We conclude that almost surely
$$
\lim_{n\to\infty}F_N(U_N,\beta)=F(\beta),\quad\beta=ze_{1,1}-\gamma_0,z\notin\sigma(P(a,b)),
$$
where
$$
F(\beta)=\det(I_n\otimes I_p-(\gamma_1(\omega(\beta)-s\gamma_1)^{-1})\otimes T),
$$
and the convergence is uniform on compact sets. The limit $F(ze_{1,1}-\gamma_0)$ is a (deterministic) 
analytic function on $\mathbb C\setminus\sigma(P(a,b))$.  An application of Hurwitz's theorem on zeros 
of analytic functions (see \cite[Theorem 5.2]{A}) yields the following result.
\begin{prop}\label{prop:almost main part 1}
	Suppose that $t_1,t_2\in\mathbb R$, $t_1<t_2$, $[t_1,t_2]\subset\mathbb R\setminus
\sigma(P(a,b))$, $F(t_je_{1,1}-\gamma_0)\ne0$ for $j=1,2$, and the function $F(ze_{1,1}-\gamma_0)$ 
has at most one zero $t$ in the interval $(t_1,t_2)$. Then, almost surely for large $N$, the matrix 
$P(A_N,B_N)$ has exactly $m$ eigenvalues in the interval $[t_1,t_2]$, where $m$ is the order of $t$ as a 
zero of $F(ze_{1,1}-\gamma_0)$ and $m=0$ if this function does not vanish on $[t_1,t_2]$.
\end{prop}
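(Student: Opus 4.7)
The plan is to apply Hurwitz's theorem to the analytic family $z\mapsto F_N(U_N,ze_{1,1}-\gamma_0)$ on a suitable complex neighborhood of $[t_1,t_2]$, exploiting the almost-sure uniform convergence on compact subsets of $\mathbb{C}\setminus\sigma(P(a,b))$ to $z\mapsto F(ze_{1,1}-\gamma_0)$ established just before the statement, and then to translate the resulting count of zeros into a count of eigenvalues of $P(A_N,B_N)$ in $[t_1,t_2]$.

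First I would construct the domain. Because $[t_1,t_2]$ is a compact subset of the open set $\mathbb{C}\setminus\sigma(P(a,b))$, because $z\mapsto F(ze_{1,1}-\gamma_0)$ is analytic there with isolated zeros, and because by hypothesis this function does not vanish at $t_1$ or $t_2$ and has at most a single zero of order $m$ in $(t_1,t_2)$, I can choose $\epsilon,\delta>0$ small enough that the open rectangle
$$
D=\{z\in\mathbb{C}:t_1-\epsilon<\Re z<t_2+\epsilon,\ |\Im z|<\delta\}
$$
satisfies $\overline{D}\subset\mathbb{C}\setminus\sigma(P(a,b))$, the function $z\mapsto F(ze_{1,1}-\gamma_0)$ is nonvanishing on $\partial D$, and its only zero in $D$ is the (possibly absent) zero at $t$. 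Applying Hurwitz's theorem to the uniform convergence on $\overline{D}$ would then yield that almost surely, for all sufficiently large $N$, the map $z\mapsto F_N(U_N,ze_{1,1}-\gamma_0)$ has exactly $m$ zeros in $D$, counted with multiplicity. Applied also to the two end-strips $\{t_1-\epsilon<\Re z<t_1,|\Im z|<\delta\}$ and $\{t_2<\Re z<t_2+\epsilon,|\Im z|<\delta\}$, on which $F(\,\cdot\,e_{1,1}-\gamma_0)$ has no zeros, Hurwitz's theorem shows that these $m$ zeros are in fact confined to $\{z\in D:t_1\le\Re z\le t_2\}$.

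Finally I would identify these zeros with eigenvalues of $P(A_N,B_N)$ in $[t_1,t_2]$. The factorization
$$
ze_{1,1}\otimes I_N-L(A_N,B_N)=\bigl(I_n\otimes I_N-(\gamma_1\otimes\Delta_N)R_N(U_N,ze_{1,1}-\gamma_0)\bigr)\bigl(ze_{1,1}\otimes I_N-L(C_N,B_N)\bigr),
$$
combined with Sylvester's identity and Lemma \ref{o lema}, shows that for every $z\notin\sigma(P(C_N,B_N))$ the order of $z$ as a zero of $z\mapsto F_N(U_N,ze_{1,1}-\gamma_0)$ coincides with the multiplicity of $z$ as an eigenvalue of $P(A_N,B_N)$. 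Strong convergence of $(c_N,d_N)$ to $(a,b)$ ensures that almost surely $\sigma(P(C_N,B_N))\cap\overline{D}=\emptyset$ for all large $N$. Since $P(A_N,B_N)$ is selfadjoint, the $m$ zeros of $F_N(U_N,\cdot)$ in $D$ are real, and by the confinement above they lie in $[t_1,t_2]$, where they correspond bijectively with multiplicities to the eigenvalues of $P(A_N,B_N)$ in that interval.

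I expect the main technical care, rather than any genuine obstacle, to be the simultaneous multiplicity bookkeeping: tuning $\epsilon$ and $\delta$ so that Hurwitz applies simultaneously on $D$ and on its end-strips, and verifying that the strong convergence of $(c_N,d_N)$ really does keep the spectrum of the de-spiked matrix $P(C_N,B_N)$ away from $\overline{D}$, so that the correspondence between zeros of $F_N(U_N,\cdot)$ and eigenvalues of $P(A_N,B_N)$ is unambiguous on the scale of the chosen rectangle.
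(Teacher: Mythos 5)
Your argument is essentially the paper's own proof: the same factorization of $ze_{1,1}\otimes I_N-L(A_N,B_N)$ through the de-spiked matrix, Sylvester's identity and Lemma \ref{o lema} to equate zeros of $F_N(U_N,\cdot)$ with eigenvalue multiplicities of $P(A_N,B_N)$, the almost sure locally uniform convergence $F_N\to F$ from Proposition \ref{sample resolvents under strong convergence}, and Hurwitz's theorem. Your extra bookkeeping (rectangle plus end-strips, reality of the zeros, keeping $\sigma(P(C_N,B_N))$ away from $\overline{D}$ via strong convergence) just makes explicit what the paper leaves implicit, so the proposal is correct.
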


Part(1) of Theorem {\ref{Main}} is a reformulation of Proposition \ref{prop:almost main part 1}. To see that 
this is the case, we observe that $T$ is a diagonal matrix and thus the matrix
$$
I_n\otimes I_p-(\gamma_1(\omega(ze_{1,1}-\gamma_0)-s\gamma_1)^{-1})\otimes T
$$ 
is block diagonal with diagonal blocks 
$$
G_{j,s}(z)=I_n-(\theta_j\gamma_1-s\gamma_1)(\omega(ze_{1,1}-\gamma_0)-s\gamma_1)^{-1},\quad j=1,\dots,p.
$$
If $\det(G_{j,s}(z))$ has a zero of order $m_j$ at $t$ then the number $m$ in the statement is 
$m_1+\cdots+m_p$. We recall that $G_{j,s}$ is analytic   on $\mathbb C\setminus\sigma(P(a,b))$ but 
$\omega(ze_{1,1}-\gamma_0)$ is only meromorphic. It is not immediately apparent that the number 
$m_j$ does not depend on $s$ but this is a consequence of the following result.

\begin{lemma}\label{lem:independence on s}
	Suppose that $\alpha_1,\alpha_2\in M_n(\mathbb C)$ are such that  
	$(\omega(ze_{1,1}-\gamma_0)-\alpha_k)^{-1}$ extends analytically to $t$ for $k=1,2$. Then the 
order of $t$ as a zero of
	$$
	\det(I_n-(\theta_j\gamma_1-\alpha_k)(\omega(ze_{1,1}-\gamma_0)-\alpha_k)^{-1})
	$$
	does not depend on $k$.
\end{lemma}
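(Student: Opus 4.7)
The plan is to factor the relevant determinant and then reduce the claim to a resolvent-identity argument for the analytic extensions of $(u-\alpha_k)^{-1}$. Writing $u(z)=\omega(ze_{1,1}-\gamma_0)$ and letting $V_k(z)$ denote the analytic extension of $(u(z)-\alpha_k)^{-1}$ to a neighborhood of $t$ furnished by the hypothesis, I would first exploit the algebraic identity
$$
I_n-(\theta_j\gamma_1-\alpha_k)(u-\alpha_k)^{-1}=(u-\theta_j\gamma_1)(u-\alpha_k)^{-1},
$$
valid wherever $u$ is analytic. Taking determinants gives
$$
\det\bigl(I_n-(\theta_j\gamma_1-\alpha_k)(u(z)-\alpha_k)^{-1}\bigr)=\det(u(z)-\theta_j\gamma_1)\cdot\det V_k(z)
$$
as an identity of meromorphic functions in a neighborhood of $t$. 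The left-hand side extends analytically to $t$, so proving that its order of vanishing is independent of $k$ reduces to showing that $\operatorname{ord}_t(\det V_1)=\operatorname{ord}_t(\det V_2)$.

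Next I would invoke the classical resolvent identity
$$
V_1-V_2=V_1(\alpha_1-\alpha_2)V_2=V_2(\alpha_1-\alpha_2)V_1,
$$
which holds on the punctured neighborhood of $t$ where $u$ is analytic. Both sides extend analytically to $t$ by the hypothesis, so by the identity theorem this equality persists on the full neighborhood on which both $V_1$ and $V_2$ are analytic. Rearranging yields
$$
V_1=\bigl(I+V_1(\alpha_1-\alpha_2)\bigr)V_2\quad\text{and}\quad V_2=\bigl(I+V_2(\alpha_2-\alpha_1)\bigr)V_1,
$$
and taking determinants gives
$$
\det V_1=\det\bigl(I+V_1(\alpha_1-\alpha_2)\bigr)\cdot\det V_2,\qquad
\det V_2=\det\bigl(I+V_2(\alpha_2-\alpha_1)\bigr)\cdot\det V_1.
$$

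Multiplying these two relations shows that
$$
\det\bigl(I+V_1(\alpha_1-\alpha_2)\bigr)\cdot\det\bigl(I+V_2(\alpha_2-\alpha_1)\bigr)\equiv 1
$$
near $t$. Both factors are analytic at $t$ (since $V_1$ and $V_2$ are), and their product is identically $1$, so each factor is nonzero at $t$ and therefore has vanishing order zero there. Substituting back into either determinant identity gives $\operatorname{ord}_t(\det V_1)=\operatorname{ord}_t(\det V_2)$, and combining with the factorization from the first paragraph completes the proof.

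The argument is essentially algebraic, and the only point that requires care is the passage from the punctured neighborhood of $t$ to $t$ itself in the resolvent identity; this is automatic from the identity theorem, so I do not anticipate any genuine obstacle.
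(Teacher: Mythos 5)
Your argument is correct and follows essentially the same route as the paper: both start from the identity $I_n-(\theta_j\gamma_1-\alpha_k)(u-\alpha_k)^{-1}=(u-\theta_j\gamma_1)(u-\alpha_k)^{-1}$ and then show that the passage from $k=1$ to $k=2$ only introduces a factor analytic and nonvanishing at $t$, using precisely the expressions $I_n+(\alpha_2-\alpha_1)(u-\alpha_2)^{-1}$ and $I_n+(\alpha_1-\alpha_2)(u-\alpha_1)^{-1}$. The paper packages this as the single matrix function $H(z)=(u-\alpha_1)(u-\alpha_2)^{-1}$, shown analytic and invertible at $t$, while you phrase the same computation through the resolvent identity and determinants; the difference is purely presentational.
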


\begin{proof}
	An easy calculation shows that
	$$
	I_n-(\theta_j\gamma_1-\alpha_k)(\omega(ze_{1,1}-\gamma_0)-\alpha_k)^{-1}=(\omega(ze_{1,1}-
	\gamma_0)-s\gamma_1)(\omega(ze_{1,1}-\gamma_0)-\alpha_k)^{-1}.
	$$
	The desired conclusion follows if we prove that the function
	$$
	H(z)=(\omega(ze_{1,1}-\gamma_0)-\alpha_1)(\omega(ze_{1,1}-\gamma_0)-\alpha_2)^{-1}
	$$
	is analytic and invertible at $z=t$. We have
	$$
	H(z)=I_n+(\alpha_2-\alpha_1)(\omega(ze_{1,1}-\gamma_0)-\alpha_2)^{-1}
	$$
	and
	$$
	H(z)^{-1}=I_n+(\alpha_1-\alpha_2)(\omega(ze_{1,1}-\gamma_0)-\alpha_1)^{-1},
	$$
	so the analyticity and invertibility of $H$ follow from the hypothesis.
\end{proof}

We proceed now to Part(2) of Theorem {\ref{Main}}. Thus, assumptions (A1--A3) and (B0--B2) are in force 
and, in addition, the spikes $\theta_1,\dots,\theta_p$ are distinct. In particular, 
$\sup\{\|C_N\|+\|D_N\|:N\in\mathbb N\}<+\infty$.

If $S\subset\mathbb R$ is a Borel set and $A\in M_N(\mathbb C)$ is a selfadjoint operator, then $E_A(S)$ 
denotes the orthogonal projection onto the linear span of the eigenvectors of $A$ corresponding to 
eigenvalues in $S$. For instance, under the hypotheses of Part(2) of Theorem {\ref{Main}}, $E_{A_N}
(\{\theta_j\})$ is a projection of rank one for $j=1,\dots,p$. If $h\colon\mathbb R\to\mathbb C$ is a 
continuous function, then $h(A)$ denotes the usual functional calculus for selfadjoint matrices.  Thus, if 
$Ax=tx$ for some $t\in\mathbb R$ and $x\in\mathbb C^N$, then $h(A)x=h(t)x$.

Fix $t\in\mathbb R\setminus\sigma(P(a,b))$ and $\varepsilon>0$ small enough. We need to show that, 
almost surely,
\begin{equation}\label{prj}
\lim_{N\to\infty}\mathrm{Tr}_N\left[E_{A_N}(\{\theta_i\})E_{P(A_N,B_N)}
((t-\varepsilon,t+\varepsilon))\right]=\delta_{i,i_0}{\mathcal C}_i(t).
\end{equation}
Choose $\delta>0$   and $N_0\in\mathbb N$ such that 
$[\theta_j-\delta,\theta_j+\delta]\cap\sigma(A_N)=\{\theta_j\}$, for $N\ge N_0$ and $ j=1,\dots,p$. 
Pick infinitely differentiable functions $f_j:\mathbb R\to[0,1]$ supported in $[\theta_j-\delta,
\theta_j+\delta]$ such that $f_j(\theta_j)=1$, $j=1,\dots,p$. Also pick an infinitely differentiable function 
$h:\mathbb R\to[0,1]$ supported in $(t-\varepsilon,t+\varepsilon)$ such that $h$ is identically $1$ on 
$[t-\varepsilon/2,t+\varepsilon/2]$. Then part (1) of Theorem {\ref{Main}} implies that, almost surely for 
large $N$, we have
$$
E_{A_N}(\{\theta_j\})E_{P(A_N,B_N)}((t-\varepsilon,t+\varepsilon))=f_j(A_N)h(P(A_N,B_N)).
$$
 
In anticipation of a concentration inequality, we prove a Lipschitz estimate for the functions $g_{N,j}\colon
{\mathrm U}(N)\to M_N(\mathbb C)$ defined by 
$$
g_{N,j}(V)=\mathrm{Tr}_N(f_j(A_N)h(P(A_N,VD_NV^*))),\quad V\in{\mathrm U}(N),j=1,\dots,p.
$$

\begin{lemma}There exists $k>0$, independent of $N$, such that
	$$
	|g_{N,j}(V)-g_{N,j}(W)|\le k\|V-W\|_2,\quad V,W\in{\mathrm U}(N), j=1,\dots,p.
	$$
\end{lemma}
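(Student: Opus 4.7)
The plan is to apply the Cauchy--Schwarz inequality in the Hilbert--Schmidt inner product and reduce the estimate to three separate Lipschitz bounds that must be uniform in $N$. Writing $B_V = V D_N V^*$ and $H_V = P(A_N, B_V)$, so that $g_{N,j}(V) = \mathrm{Tr}_N(f_j(A_N)\, h(H_V))$, trace duality $|\mathrm{Tr}_N(XY)| \leq \|X\|_2 \|Y\|_2$ gives
\[
|g_{N,j}(V) - g_{N,j}(W)| \leq \|f_j(A_N)\|_2 \, \|h(H_V) - h(H_W)\|_2.
\]

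For the first factor, the distinct-spikes hypothesis together with (A3) lets me shrink $\delta$ and choose $N_0$ so that $[\theta_j-\delta,\theta_j+\delta] \cap \sigma(A_N) = \{\theta_j\}$ with $\theta_j$ a simple eigenvalue for every $N \geq N_0$. Then $f_j(A_N)$ is a rank-one orthogonal projection and $\|f_j(A_N)\|_2 = 1$; the finitely many $N < N_0$ have individual Lipschitz constants on the compact group $\mathrm{U}(N)$ that can be absorbed into the final $k$. For the middle factor I invoke the standard matrix-Lipschitz inequality $\|h(X) - h(Y)\|_2 \leq \|h'\|_\infty \|X-Y\|_2$, valid for self-adjoint $X, Y$ and $C^1$ scalar $h$. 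This follows at once by expanding both sides in the orthonormal Hilbert--Schmidt basis $\{e_i f_k^*\}$ built from eigenbases $\{e_i\}$, $\{f_k\}$ of $X, Y$ and applying $|h(x_i) - h(y_k)| \leq \|h'\|_\infty |x_i - y_k|$ entry by entry. For the last factor, a telescoping expansion of each noncommutative monomial of $P$ along its $B$-slots, combined with the H\"older-type inequality $\|XZY\|_2 \leq \|X\|\,\|Z\|_2\,\|Y\|$, yields
\[
\|H_V - H_W\|_2 \leq C_P \max(\|A_N\|, \|D_N\|)^{\deg P - 1}\,\|B_V - B_W\|_2,
\]
and the computation in the proof of Lemma \ref{Lip estimate} supplies $\|B_V - B_W\|_2 \leq 2\|D_N\|\,\|V-W\|_2$.

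The main point that requires care is ensuring simultaneous uniformity of all three constants in $N$. This is where the standing hypotheses of the section enter: (A2)--(A3) and (B1) imply $\sup_N (\|A_N\| + \|D_N\|) < +\infty$ almost surely on the probability-one event on which we work, so the polynomial constant $C_P \max(\|A_N\|, \|D_N\|)^{\deg P - 1}$ is bounded independently of $N$; combined with $\|f_j(A_N)\|_2 \leq 1$ for $N \geq N_0$ and the finiteness of $\|h'\|_\infty$, this delivers a Lipschitz constant $k$ independent of both $N$ and $j \in \{1,\dots,p\}$. No single step is conceptually deep; the only subtlety is coordinating the three uniform bounds, and in particular using the distinct-spikes hypothesis to ensure $f_j(A_N)$ is eventually a rank-one projection.
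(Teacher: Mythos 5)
Your proof is correct and follows essentially the same route as the paper: bound $|g_{N,j}(V)-g_{N,j}(W)|$ by $\|f_j(A_N)\|_2\,\|h(P(A_N,VD_NV^*))-h(P(A_N,WD_NW^*))\|_2$, use the Hilbert--Schmidt Lipschitz property of the scalar functional calculus, and show $V\mapsto P(A_N,VD_NV^*)$ is Lipschitz with constant uniform in $N$ thanks to $\sup_N(\|A_N\|+\|D_N\|)<\infty$. The only differences are cosmetic: you telescope the monomials of $P$ where the paper invokes the product rule for Lipschitz constants, and you prove the matrix-Lipschitz inequality inline where the paper cites it.
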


\begin{proof} Given a Lipschitz function $u:{\mathrm U}(N)\to M_N(\mathbb C)$, we denote by 
$\mathrm{Lip}(u)$ the smallest constant $c$ such that
$$
\|u(V)-u(W)\|_2\le c\|V-W\|_2,\quad V,W\in{\mathrm U}(N),
$$
and we set $\|u\|_\infty=\sup\{\|u(V)\|:v\in{\mathrm U}(N)\}$. If $u_1,u_2:{\mathrm U}(N)\to 
M_N(\mathbb C)$ are two Lipschitz functions, then ${\mathrm{Lip}}(u_1+u_2)\le {\mathrm{Lip}}(u_2)
+{\mathrm{Lip}}(u_1)$ and
$$
{\mathrm{Lip}}(u_1u_2)\le \|u_1\|_\infty{\mathrm{Lip}}(u_2)+{\mathrm{Lip}}(u_1)\|u_2\|_\infty.
$$
Since the functions $V\mapsto V$ and $V\mapsto V^*$ are Lipschitz with constant $1$, we deduce 
immediately that the map $V\mapsto P(A_N,VD_NV^*)$ is Lipschitz with constant bounded independently 
of $N$. It is well-known that a Lipschitz function $f:\mathbb R\to\mathbb R$ is also Lipschitz, with the
same constant, when viewed as a map on the selfadjoint matrices with the Hilbert-Schmidt norm (see
for instance, \cite[Lemma A.2]{C}).  The function $h$ is infinitely differentiable with compact support, 
hence Lipschitz. We deduce that the map $V\mapsto h(P(A_N,VD_NV^*))$ is Lipschitz  with constant 
bounded independently of $N$. Finally, we have
	 $$
|g_{N,j}(V)-g_{N,j}(W)|\le \|f_j(A_N)\|_2\|h(P(A_N,VD_NV^*))-h(P(A_N,WD_NW^*))\|_2,
	 $$
and the lemma follows because $\|f_j(A_N)\|_2=1$.
\end{proof}

An application of \cite[Corollary 4.4.28]{AGZ} yields 
$$
\mathbb P\left(\left|g_{N,j}(U_N)-\mathbb E(g_{N,j}(U_N))\right|>\eta\right)\le
2\exp\left(-\frac{\eta^2N}{4k^2}\right),\quad\eta>0,j=1,\dots,p,
$$
and the Borel-Cantelli lemma shows that, almost surely,
\begin{equation}\label{BorelCantelli}
\lim_{N\to\infty}(g_{N,j}(U_N)-\mathbb E(g_{N,j}(U_N)))=0,\quad j=1,\dots,p.
\end{equation} 
The expected value  in (\ref{BorelCantelli}) is estimated  using \cite[Lemma 6.3]{C} and the fact that 
$f_j(A_N)$ is the projection onto the $j$th coordinate. If we set $r_N(z)=(z I_N-P(A_N,B_N))^{-1}$ and
$\widetilde R_N(ze_{1,1}-\gamma_0)=((ze_{1,1}-\gamma_0)\otimes I_N-L(A_N,B_N))^{-1}$, 
$z\in\mathbb C^+$, then
\begin{align}
\mathbb E(g_{N,j}(U_N))&= -\lim_{y\downarrow0}\frac1\pi\Im\int_\mathbb R\mathbb E(\mathrm{Tr}_N(
f_j(A_N)r_N(\xi+iy))h(\xi)\,\mathrm{d}\xi\nonumber\\
&  =-\lim_{y\downarrow0}\frac1\pi\Im\int_\mathbb R\mathbb E(r_N(\xi+iy)_{j,j})h(\xi)\,\mathrm{d}\xi.
\label{7.3}
\end{align}
The construction of the linearization $L$ (Section \ref{sec:linearization}) is such that the matrix 
$\widetilde R_N(ze_{1,1}-\gamma_0)$, viewed as an $n\times n$ block matrix, has $r_N(z)$ as its $(1,1)$ 
entry. By Propositions \ref{prop.6.2}(1) and \ref{sample resolvents,ordinary convergence} (with $A_N$ in 
place of $C_N$) and the unitary invariance of the distribution of $B_N$, for $z\in\mathbb C^+$ the matrices
$$
(I_n\otimes P_N)\mathbb E(\widetilde R_N(ze_{1,1}-\gamma_0))(I_n\otimes P_N)^*
$$ 
converge as $N\to\infty$ to the block diagonal matrix with diagonal entries
$$
(\omega(ze_{1,1}-\gamma_0)-\theta_j\gamma_1)^{-1}.
$$
It follows that
\begin{align}
\lim_{N\to\infty}
\mathbb E ((r_N(z))_{j,j})=((\omega(ze_{1,1}-\gamma_0)-\theta_j\gamma_1)^{-1})_{1,1}.\label{7.4}
\end{align}
We intend to let 
$N\to\infty$ in \eqref{7.3} using \eqref{7.4}, so we consider the differences 
$$
\Delta_{j,N}(z)=((\omega(ze_{1,1}-\gamma_0)-\theta_j\gamma_1)^{-1})_{1,1}
-\mathbb E (r_N(z)_{j,j}).
$$
By Corollary \ref{cor:local bounds for omega},
these functions are defined on $\mathbb C\setminus[-k,k]$ for some $k>0$ 
and satisfy $\Delta_{j,N}(\overline{z})=\overline{\Delta_{j,N}(z)}$. We claim that there exists
a sequence $\{v_N\}_{N\in\mathbb N}\subset(0,+\infty)$ such that $\lim_{N\to\infty}v_N=0$ and
\begin{equation}\label{estim}
|\Delta_{j,N}(z)|\le v_N\left(1+\frac{1}{(\Im z)^2}\right),\quad z\in\mathbb C^+.
\end{equation}
To verify this claim, we observe first \cite{akhieser} that the function $\mathbb E (r_N(z)_{j,j})$ is the Cauchy-Stieltjes
transform of a Borel probability measure $\sigma_{N,j}$ on $\mathbb R$.
Since
$$
\sup\{\|P(A_N,B_N)\|\colon N\in\mathbb N\}<\infty,
$$
the measures $\{\sigma_{N,j}\}_{N\in\mathbb N}$ have uniformly bounded supports. Now, \eqref{7.4} shows that the Cauchy-Stieltjes transform of any accumulation point of this sequence of measures is equal to $((\omega(ze_{1,1}-\beta_0)-\theta_j\beta_1)^{-1})_{1,1}$. It follows that this  sequence has
a weak limit $\sigma_j$ 
that is a Borel probability measure with compact support. The existence of the sequence 
$\{v_N\}_{N\in\mathbb N}$ follows from  \cite[Lemma 4.1]{BBCF} applied to the signed measures 
$\rho_N=\sigma_{N,i}-\sigma_i$.

We use \eqref{estim}, \eqref{7.3}, and the Lemma from \cite[Appendix]{CD} to obtain 
\begin{align}
\lim_{N\to\infty}\mathbb E(g_{N,j}(U_N))
& = -\lim_{N\to\infty}\lim_{y\downarrow0}\frac1\pi\Im\int_\mathbb R\mathbb E(r_N(\xi+iy)_{j,j})h(\xi)\,\mathrm{d}\xi\nonumber\\
& =  \lim_{N\to\infty}\lim_{y\downarrow0}\frac1\pi\Im\int_\mathbb R\Delta_{j,N}(\xi+iy)h(\xi)\,
\mathrm{d}\xi\nonumber\\
&  \quad-\lim_{y\downarrow0}\frac1\pi\Im\int_\mathbb R((\omega((\xi+iy)e_{1,1}-\gamma_0)-
\theta_j\gamma_1)^{-1})_{1,1}h(\xi)\,\mathrm{d}\xi\nonumber\\
& =  -\lim_{y\downarrow0}\frac1\pi\Im\int_\mathbb R((\omega((\xi+iy)e_{1,1}-\gamma_0)-
\theta_j\gamma_1)^{-1})_{1,1}h(\xi)\,\mathrm{d}\xi.
\label{7.6}
\end{align}
The choice of $h$, and the fact that 
$$
u(z)=((\omega(ze_{1,1}-\gamma_0)-\theta_j\gamma_1)^{-1})_{1,1}
$$ 
is analytic and real-valued on the intervals $[t-\varepsilon,t-\varepsilon/2]$ and $[t+\varepsilon/2,
t+\varepsilon]$, imply that the last line in \eqref{7.6} can be rewritten as
\begin{align}
 \lim_{y\downarrow0}&\frac1\pi\int_{t-\varepsilon}^{t-\frac\varepsilon2}\Im(u(\xi+iy))h(\xi)\,\mathrm{d}\xi+
 \lim_{y\downarrow0}\frac1\pi\int_{t+\frac\varepsilon2}^{t+\varepsilon}\Im(u(\xi+iy))h(\xi)\,\mathrm{d}\xi \nonumber\\
 &-\lim_{y\downarrow0}\frac1\pi\int_{t-\frac\varepsilon2}^{t+\frac\varepsilon2}
 \Im(u(\xi+iy))\,\mathrm{d}\xi\nonumber\\
& =  \lim_{y\downarrow0}\frac1{2\pi i}\int_{t-\frac\varepsilon2}^{t+\frac\varepsilon2}
u(\xi+iy)\,\mathrm{d}\xi -\lim_{y\downarrow0}\frac1{2\pi i}\int_{t-\frac\varepsilon2}^{t+\frac\varepsilon2}
u(\xi-iy)
\,\mathrm{d}\xi.\label{partial residue}
\end{align}
Recall (see, for instance, \cite[Chapter 4]{A}) that if $f$ is an analytic function on a simply connected 
domain $D$, except for an isolated singularity $a$, then $\frac1{2\pi i}\int_\gamma f(z)\,\mathrm{d}z
=n(\gamma,a)\mathrm{Res}_{z=a}f(z)$. Here $\gamma$ is a closed Jordan path in $D$ not containing 
$a$, $n(\gamma,a)$ is the winding number of $\gamma$ with respect to $a$, and $\mathrm{Res}_{z=a}
f(z)$ is that number $R$ which satisfies the condition that $f(z)-\frac{R}{z-a}$ has vanishing period
(called the residue of $f$ at $a$). Denote by $\Gamma_y$ the rectangle with corners $t\pm(\varepsilon/2)
\pm iy$ and let $\gamma_y$ be the boundary of $\Gamma_y$ oriented counterclockwise. The expression 
in \eqref{partial residue} represents the integral of $u$ on the horizontal segments in $\gamma_y$. It is 
clear that the integral of $u$ on the vertical segments is $O(y)$, and thus \eqref{7.6} implies the equality
$$
\lim_{N\to\infty}\mathbb E(g_{N,j}(U_N))=
\lim_{y\downarrow0}\frac1{2\pi i}\int_{\gamma_y}u(z)\,{\mathrm d}z=\mathrm{Res}_{z=t}u(z).
$$
The alternative formula in Theorem \ref{Main} follows from the fact that $u$ has a simple pole at $t$ 
because $u$ maps $\mathbb C^+$ to $\mathbb C^-$.

\section{The Wigner model}\label{cvFw}

We proceed now to the proof of Theorem
 \ref{thm8.2}.  The matrices  $A_N$
  are subject to the hypotheses (A1--A3), while $X_N/\sqrt{N}$ and
    $X_N$ satisfies conditions (X0--X3).   By \cite[Section 3, Assertion 2]{M}, it suffices to proceed under the
 additional hypothesis that each $A_N$ is a 
 constant matrix. The
     free variables $a$ and $b$ are such that
     $b$ has standard
semicircular  distribution $\nu_{0,1}$.

One consequence of the fact   that $b$ is a semicircular variable is that the function
$\omega$ is analytic on the entire set
$$
\{\beta\in M_n(\mathbb C):\beta\otimes 1-\gamma_1\otimes a-\gamma_2\otimes b\text{ is invertible.}\}
$$
 This
justifies the comment from Remark \ref{rmk5.2}. We recall that in this special case the subordination function is given by
$$\omega(\beta)
=\beta-\gamma_2(\mathrm Id_{M_n(\mathbb C)}
\otimes\tau)(\beta\otimes 1-\gamma_1\otimes a
-\gamma_2\otimes b)^{-1}\gamma_2.$$
Since the distribution of the random matrix $X_N$ is not usually invariant under unitary conjugation, we 
can no longer assume that $A_N$ is diagonal in the standard basis $\{f_1,\dots,f_N\}$. There is however a 
(constant) unitary matrix $V_N\in M_N(\mathbb C)$ such that $A_N$ is diagonal in the  basis $\{V_Nf_1,
\dots,V_Nf_N\}$ with eigenvalues $\theta_1,\dots,\theta_p,\lambda_N^{(p+1)},\dots\lambda_N^{(N)}$, 
$N\ge p$.  Viewing each realization of the random matrices $A_N$ and $X_N/\sqrt{N}$ as elements of the 
noncommutative probability space $\left(M_N(\mathbb C),\mathrm{tr}_N\right)$, almost surely the pairs 
$(A_N,X_N/\sqrt{N})$ converge in distribution, but not strongly, to $(a,b)$ as $N\to\infty$.  A 
modification of $A_N$ provides almost surely  strongly convergent pairs $(C_N,X_N/\sqrt{N})$.  Thus, let 
$C_N$ be diagonal in the  basis $\{V_Nf_1,\dots,V_Nf_N\}$ with eigenvalues $\lambda_N^{(1)},\dots,
\lambda_N^{(N)}$ that coincide with those of $A_N$ except that
$\lambda_N^{(1)}=\cdots=\lambda_N^{(p)}=s$ is an arbitrary (but fixed for the remainder of this section) element of $\mathrm{supp}(\mu)$.
For $N\ge p$, the difference $\Delta_N=A_N-C_N$ can then be written as $\Delta_N=V_NP_N^*TP_NV_N^*$, where $T\in M_p(\mathbb C)$ is the diagonal matrix with eigenvalues $\theta_1-s\dots,\theta_p-s$ and $P_N:\mathbb C^N\to\mathbb C^p$ is the orthogonal projection. Almost surely, the pairs $(C_N,X_N/\sqrt{N})$ converge strongly to $(a,b)$ as shown in 
\cite[Theorem 1.2]{BC} and \cite[Proposition 2.1]{CM}.  We continue using the notation introduced in (\ref{RN})  and (\ref{omegaN}) with $c_N=C_N$ and $d_N=\frac{X_N}{\sqrt{N}}$.
 The calculation in Section \ref{sieben} show that, almost surely, for $N$ large enough,  the number of eigenvalues of $P(A_N,X_N/\sqrt{N})$ in a small enough neighborhood of $t\in\mathbb R\setminus\sigma(P(a,b))$ is equal to the number of zeros of $F_N(ze_{1,1}-\gamma_0)$ in this neighborhood, where
$$
F_N(ze_{1,1}-\gamma_0)=
\det(I_n\otimes I_p-(\gamma_1\otimes T)(I_n\otimes P_NV_N^*)R_N(ze_{1,1}-\gamma_0)(I_n\otimes P_N^*V_N))
$$
is a random analytic function.
We focus on the study of the large $N$ behavior of the matrix function
\begin{equation}\label{GN}
\mathcal{F}_N(\beta)=(I_n\otimes P_NV_N^*)R_N(\beta)(I_n\otimes V_N P_N^*).
\end{equation}
We start with the special case in which $X_N$ is replaced by a standard G.U.E.. The following proposition is a consequence of the results in Section \ref{sieben}. 
Thus, suppose that $(X^{g}_N)_{N\in\mathbb N}$ is a sequence of standard G.U.E. ensembles
and we set 
${R}_N^g(\beta)=(\beta\otimes I_N-L(C_N,X^g_N/\sqrt{N}))^{-1},$
and
\begin{equation}
\label{GNW}
\mathcal{F}_N^g(\beta)=(I_n\otimes P_NV_N^*){R}_N^g(\beta)(I_n\otimes V_N P_N^*).
\end{equation}

\begin{prop}\label{gaussien}
We have $$\lim_{N\to\infty}\mathbb E( \mathcal {F}_N^g(\beta))=
(\omega(\beta)-s\gamma_1)^{-1}\otimes I_p$$
for every $\beta \in\mathbb H^+( M_n(\mathbb C))$.
\end{prop}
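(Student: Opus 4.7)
The plan is to reduce this GUE statement to Proposition \ref{sample resolvents,ordinary convergence} from the unitarily invariant setting, using the fact that the GUE distribution is itself unitarily invariant. First, I would conjugate by $W_N = I_n\otimes V_N$: since $V_N$ is deterministic and the GUE is invariant under unitary conjugation, $\widetilde{X}_N^g := V_N^* X_N^g V_N$ has the same distribution as $X_N^g$, and a direct calculation using the tensor structure gives
$$
W_N^* R_N^g(\beta) W_N = (\beta\otimes I_N - \gamma_1\otimes\widetilde{C}_N - \gamma_2\otimes\widetilde{X}_N^g/\sqrt{N})^{-1} =: \widetilde{R}_N(\beta),
$$
where $\widetilde{C}_N = V_N^* C_N V_N$ is now diagonal in the standard basis with its first $p$ eigenvalues equal to $s$. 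Consequently $\mathcal{F}_N^g(\beta) = (I_n\otimes P_N)\widetilde{R}_N(\beta)(I_n\otimes P_N^*)$, a random matrix with the same distribution as the original, and in particular the same expectation.

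Next I would check that the pair $(\widetilde{C}_N,\widetilde{X}_N^g/\sqrt{N})$ fits the unitarily invariant framework of Section \ref{sieben}. Writing $\widetilde{X}_N^g/\sqrt{N} = U_N D_N^g U_N^*$ with $U_N$ Haar-unitary independent of the (random) eigenvalue matrix $D_N^g$, Theorem \ref{stronguni} together with the observation that $\widetilde{C}_N$ agrees with $A_N$ outside the $p$ spike positions (replaced by the bulk value $s\in\mathrm{supp}(\mu)$) shows that $(\widetilde{C}_N,\widetilde{X}_N^g/\sqrt{N})$ converges strongly almost surely in distribution to $(a,b)$, with $b$ semicircular. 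I would then apply Proposition \ref{sample resolvents,ordinary convergence} with $\lambda^{(1)}=\cdots=\lambda^{(p)}=s$ and $\mathcal{D} = \mathbb{H}^+(M_n(\mathbb{C}))$; this set lies in every $\mathcal{G}_N$ and in $\mathcal{G}$ by Lemma \ref{preliminary}(3), and local uniform boundedness of $\{\omega_N\}$ on $\mathbb{H}^+(M_n(\mathbb{C}))$ follows from the resolvent bound $\|R_N(\beta)\|\le\|(\Im\beta)^{-1}\|$ combined with Lemma \ref{preliminary}(4). The proposition then yields, almost surely in norm,
$$
\lim_{N\to\infty}(I_n\otimes P_N)\widetilde{R}_N(\beta)(I_n\otimes P_N^*) = (\omega(\beta)\otimes I_p - \gamma_1\otimes sI_p)^{-1} = (\omega(\beta)-s\gamma_1)^{-1}\otimes I_p.
$$

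Finally, to pass from almost sure convergence to convergence in expectation, I would invoke dominated convergence. For $\beta\in\mathbb{H}^+(M_n(\mathbb{C}))$ we have the deterministic pointwise bound $\|\mathcal{F}_N^g(\beta)\|\le\|(\Im\beta)^{-1}\|$, and since $\mathcal{F}_N^g(\beta)$ is a matrix of fixed dimension $np\times np$, entrywise dominated convergence delivers $\lim_N\mathbb{E}(\mathcal{F}_N^g(\beta))=(\omega(\beta)-s\gamma_1)^{-1}\otimes I_p$.

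The main obstacle I anticipate is the clean application of Proposition \ref{sample resolvents,ordinary convergence} in a setting where the auxiliary diagonal matrix $D_N^g$ arising from the GUE is itself random (whereas in Section \ref{cvF} it was presented as deterministic). I would address this by a Fubini/conditioning argument: conditionally on $D_N^g$, the concentration estimates of Lemma \ref{Lip estimate} and Proposition \ref{prop.6.2}, and the identification of subordination-function limits in Proposition \ref{omega N vs omega}, all apply unchanged; one then integrates out $D_N^g$ using the standard fact that almost surely $\sup_N\|D_N^g/\sqrt{N}\|<+\infty$ and that $\mu_{D_N^g/\sqrt{N}}$ converges weakly to the semicircle law, so that the hypotheses hold for a.e.\ realization of $D_N^g$.
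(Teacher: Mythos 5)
Your proposal is correct and follows essentially the same route as the paper: use unitary invariance of the G.U.E.\ to reduce to $V_N=I_N$, write $X^g_N/\sqrt N=U_ND_NU_N^*$ with $U_N$ Haar independent of the eigenvalue matrix, apply Proposition \ref{sample resolvents,ordinary convergence} conditionally on the (almost surely bounded, semicircle-converging) diagonal part, and conclude by dominated convergence using the bound $\|\mathcal F_N^g(\beta)\|\le\|(\Im\beta)^{-1}\|$. The paper's only cosmetic difference is that it truncates on the event $\|X^g_N/\sqrt N\|\le 3$ and controls the complement via Bai--Yin rather than relying solely on the uniform resolvent bound; also note that the local uniform boundedness of $\{\|\omega_N\|\}$ on $\mathbb H^+(M_n(\mathbb C))$ comes from $\sup_N(\|C_N\|+\|D_N\|)<\infty$ via \eqref{omegaN} rather than from Lemma \ref{preliminary}(4), which only gives a lower bound on $\Im\omega_N$.
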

\begin{proof} Since G.U.E. ensembles are invariant under unitary conjugation we may, and do, assume that $V_N=I_N$ for every $N\in\mathbb N$. For every $\beta\in\mathbb H^+(M_N(\mathbb C))$ we have
	$$
\mathbb E( \mathcal{F}_N^g(\beta))=
\mathbb E( \mathcal{F}_N^g(\beta){\mathbf 1}_{\|X^g_N/\sqrt{N}\|\le 3})+
\mathbb E( \mathcal{F}_N^g(\beta)
{\mathbf 1}_{\|X^g_N/\sqrt{N}\|> 3}),	
	$$
	and the second term is at most
	$\|(\Im\beta)^{-1}\|\mathbb{P}(\|X^g_N/\sqrt{N}\|>3)$. As shown by Bai and Yin \cite{BYi}, this number tends to zero as $N\to\infty$. To estimate the first term, we recall that
	$X^g_N/\sqrt{N}=U_ND_NU_N^*$, where $U_N$ is a random matrix uniformly distributed in $\mathrm{U}(N)$ and $D_N$ is a random diagonal matrix, independent from $U_N$,
	whose empirical  spectral measure  converges almost surely to $\nu_{0,1}$ as $N\to\infty$.
	Thus, we can write\\

	 $
 \mathbb E( \mathcal{F}_N^g(\beta){\mathbf 1}_{\|X^g_N/\sqrt{N}\|\le 3})$ $$=\int_{\Omega} \int_{\Omega}
 (I_n\otimes P_N)R_N((\xi_1,\xi_2), \beta)(I_n\otimes P_N^*)\,\mathrm{d}\mathbb P(\xi_1)
 {\mathbf 1}_{\|D_N(\xi_2)\|\le 3}\,\mathrm{d}\mathbb P(\xi_2),
 $$
	$$R_N((\xi_1,\xi_2), \beta):=(\beta\otimes I_N-L( C_N, U_N(\xi_1)D_N(\xi_2)U_n(\xi_1)^*))^{-1}.$$
Proposition \ref{sample resolvents,ordinary convergence} can be applied for almost every $\xi_2$  and it shows that
$$
\int_{\Omega}
(I_n\otimes P_N)R_N((\xi_1,\xi_2),\beta)(I_n\otimes P_N^*)\,\mathrm{d}\mathbb P(\xi_1) {\mathbf 1}_{\|D_N(\xi_2)\|\le 3}
$$
converges to $(\omega(\beta)-s\gamma_1)^{-1}\otimes I_p$. The proposition follows now from an application of the dominated convergence theorem.
\end{proof}

Passing to arbitrary Wigner matrices requires
an approximation procedure from \cite[Section 2]{BC}. For every $\varepsilon >0$, there exist random selfadjoint matrices  
$X_N(\varepsilon)=[(X(\varepsilon))_{ij}]_{1\leq i,j\leq N}$ such that  
\begin{itemize}
\item[(H1)] the  variables $\sqrt{2}\Re  X_{ij}(\varepsilon)$, $\sqrt{2}\Im   X_{ij}(\varepsilon)$, 
 $X_{ii}(\varepsilon)$, $i,j\in \mathbb{N}$, $i<j$, are independent, 
centered with variance $1$ and satisfy a Poincar\'e inequality with common constant $C_{PI}(\epsilon)$,
\item[(H2)] for every $m\in\mathbb{N}$,
\begin{equation}\label{moments} 
\sup_{(i,j)\in\mathbb{N}^2}\mathbb{E}\left(\vert X_{ij}(\varepsilon)\vert^m\right)<+\infty,
\end{equation}
\end{itemize}
and almost surely for large $N$, 
$$
\left\|\frac{X_N -{X_N(\varepsilon)} }{\sqrt{N}}\right\|\leq\varepsilon.
$$
Set
\begin{equation}
R_N^\varepsilon (\beta)=
(\beta\otimes I_N-L(C_N,{X_N(\varepsilon)}/{\sqrt{N}}))^{-1}
\end{equation}
and
\begin{equation}\label{GNeps}
 \mathcal{F}^{\varepsilon}_N(\beta)=(I_n\otimes P_N V_N^*)
 R_N^\varepsilon (\beta)
 (I_n\otimes V_NP_N^*)
\end{equation}
for $\beta\in\mathbb H^+(M_n(\mathbb C^N))$. It readily follows that,  almost surely for large $N$, 
\begin{equation}\label{di} 
\left\|\mathcal{F}^{\varepsilon}_N(\beta)-\mathcal{F}_N(\beta)\right\|\leq{\varepsilon}\|\gamma_2\|\|(\Im \beta)^{-1}\|^2.
\end{equation}
Properties (H1) and (H2) imply that, for every $\varepsilon>0$, 
$$
\forall (i,j)\in\mathbb{N}^2,\;\kappa_1^{i,j,\varepsilon}=0,\;\kappa_2^{i,j,\varepsilon}=1,
$$
$$
\forall (i,j)\in\mathbb{N}^2,\;,i\neq j,\;\widetilde\kappa_1^{i,j,\varepsilon}=0,\;\widetilde\kappa_2^{i,j,\varepsilon}=1,
$$ 
and for any $m\in\mathbb{N}\setminus\{0\}$, 
\begin{equation}\label{cumulants} 
\sup_{(i,j)\in\mathbb{N}^2}|\kappa_m^{i,j,\varepsilon}|<+\infty,\;\sup_{(i,j)\in\mathbb{N}^2}
|\widetilde\kappa_m^{i,j,\varepsilon}|<+\infty,
\end{equation}
where for $i\neq j$, $(\kappa_m^{i,j,\varepsilon})_{m\geq1}$ and 
$(\widetilde \kappa_m^{i,j,\varepsilon})_{m\geq 1}$ denote  the classical 
cumulants of $\sqrt{2}\Re X_{ij}(\varepsilon)$ and $\sqrt{2}\Im  X_{ij}(\varepsilon)$ 
respectively, and $(\kappa_m^{i,i,\varepsilon})_{m\geq 1}$ denote the classical 
cumulants of $ X_{ii}(\varepsilon)$ (we set $(\widetilde\kappa_m^{i,i,\varepsilon})_{m\geq 1}\equiv 0$).

 We use the following notation for an arbitrary matrix $M\in M_n(\mathbb{C}) \otimes M_N(\mathbb{C})$:  
\begin{equation}\label{mij}
M^{ij}=(\mathrm{Id}_{M_n(\mathbb C)}\otimes\mathrm{Tr}_N)
\left(M\left(I_n\otimes\hat e_{j,i}\right)\right)\in 
M_n(\mathbb{C}),
\end{equation} 
and 
$$
M_{ij}=(\mathrm{Tr}_n\otimes\mathrm{Id}_{M_N(\mathbb C)})\left(M\left(e_{j,i}\otimes I_N\right)\right)\in 
M_N(\mathbb{C}),
$$ 
where $e_{j,i}$ (resp. $\hat e_{j,i}$) denotes the $n\times n$ (resp. $N\times N$) matrix whose unique 
nonzero entry equals 1 and occurs in row $j$ and column $i$.

\begin{prop}\label{comptilde} 

There exists a polynomial $P_\varepsilon$ in one variable with nonnegative coefficients such that for all large $N$, for  
every $v,u\in\{1,\ldots,N\}$, for every $\beta\in\mathbb{H}^+(M_n(\mathbb{C}))$, and for every deterministic 
$B_N^{(1)}, B_N^{(2)}\in M_n(\mathbb{C})\otimes M_N(\mathbb{C})$ such that $\|B_N^{(1)}\|\leq1$ 
and $\Vert B_N^{(2)}\Vert\leq 1$, we have 
\begin{equation}\label{gau} 
\left\|\mathbb{E}(B_N^{(1)} R_N^g(\beta)B_N^{(2)})^{vu}- 
\mathbb{E}( B_N^{(1)}  R^{{\tiny {\varepsilon}}}_N(\beta) B_N^{(2)})^{vu}\right\|\leq 
\frac{1}{\sqrt{N}}P_\varepsilon(\|(\Im \beta)^{-1}\|),
\end{equation}
and
\begin{equation}\label{comparaison}
\left\|\mathbb{E}(\mathcal{F}^g_N(\beta))-\mathbb{E}( \mathcal{F}_N^\varepsilon(\beta))\right\|\leq 
\frac{1}{\sqrt{N}}P_\varepsilon(\left\|(\Im w)^{-1} \right\|).
\end{equation}
\end{prop}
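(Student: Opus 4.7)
My plan is to establish \eqref{gau} by a standard Gaussian--to--Wigner comparison via cumulant (integration-by-parts) expansions, and then deduce \eqref{comparaison} from \eqref{gau} by linearity together with the rank-$np$ structure of $\mathcal F_N$. For \eqref{gau}, the key observation is that $R_N^g(\beta)$ and $R_N^\varepsilon(\beta)$ are resolvents of linear pencils in the entries of $X_N^g/\sqrt N$ and $X_N(\varepsilon)/\sqrt N$ respectively, whose entries share the same first two (classical) cumulants, namely $\kappa_1\equiv 0$ and $\kappa_2\equiv 1$. Since $B_N^{(1)} R_N(\beta) B_N^{(2)}$ is a smooth function of these entries, one can expand $\mathbb E((B_N^{(1)} R_N B_N^{(2)})^{vu})$ by repeatedly applying the Gaussian integration-by-parts formula (in the G.U.E. case) and its generalization for arbitrary distributions with bounded cumulants (as in the Khorunzhy--Khoruzhenko--Pastur formula), as used in \cite{BC} and \cite{C}. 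The contributions up to and including second-order cumulants coincide exactly, so $\mathbb E((B_N^{(1)} R_N^g B_N^{(2)})^{vu}) - \mathbb E((B_N^{(1)} R_N^\varepsilon B_N^{(2)})^{vu})$ is expressed as a finite sum of terms involving cumulants of order $m\ge 3$, together with a controlled remainder.

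Each such term is of the schematic form
\[
\frac{1}{N^{(m+1)/2}}\sum_{i,j}\kappa_m^{ij,\varepsilon}\,\mathbb E\bigl[\text{product of }(m+1)\text{ entries of }R_N^{\varepsilon},\,B_N^{(1)},\,B_N^{(2)},\,\gamma_2\bigr],
\]
with each resolvent entry bounded by $\|(\Im\beta)^{-1}\|$. Because the cumulants $\kappa_m^{ij,\varepsilon},\widetilde\kappa_m^{ij,\varepsilon}$ are uniformly bounded in $(i,j)$ by \eqref{cumulants}, and the free index sums contribute at most $O(N^2)$, while the normalization contributes $N^{-(m+1)/2}$, the $m$-th contribution is $O(N^{-(m-3)/2}\cdot\mathrm{poly}(\|(\Im\beta)^{-1}\|))$. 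Since the leading deviation occurs at $m=3$ and yields $O(N^{-1/2})$, summing the finitely many terms and controlling the remainder (via a finite number of additional integrations-by-parts and the a priori bound $\|R_N^\varepsilon(\beta)\|\le\|(\Im\beta)^{-1}\|$) gives the desired estimate with $P_\varepsilon$ a polynomial whose coefficients depend only on finitely many of the bounds in \eqref{cumulants}.

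Inequality \eqref{comparaison} then follows from \eqref{gau} as follows. Writing out $\mathcal F_N^g(\beta) - \mathcal F_N^\varepsilon(\beta)$ as $(I_n\otimes P_NV_N^*)(R_N^g(\beta)-R_N^\varepsilon(\beta))(I_n\otimes V_NP_N^*)$, one has $\|\mathcal F_N^g(\beta)-\mathcal F_N^\varepsilon(\beta)\|\le np\cdot\max_{v,u}\|(B_N^{(1)}(R_N^g-R_N^\varepsilon)B_N^{(2)})^{vu}\|$ for suitable choices of $B_N^{(1)}$ and $B_N^{(2)}$ of norm $\le 1$ that implement the projections $I_n\otimes P_NV_N^*$ and $I_n\otimes V_NP_N^*$ (more precisely, one dualizes against a unit vector realizing the norm and rewrites the resulting scalar as a finite linear combination of entries to which \eqref{gau} applies). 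Since $p$ is fixed and $n$ depends only on $P$, the constant $np$ may be absorbed into $P_\varepsilon$.

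The main obstacle is the careful bookkeeping of the cumulant expansion: one must identify which pairings of indices are \emph{free} (summed over $\{1,\dots,N\}$) and which are pinned (and thus contribute only $O(1)$), and verify that the combinatorial count matches the power of $N$ required; this is where the assumption that $B_N^{(1)},B_N^{(2)}$ are \emph{deterministic} and of norm at most one is used, via Cauchy--Schwarz in the style of \cite[Lemma~6.3]{C}. The remaining steps are routine once the leading cumulant balance is in place.
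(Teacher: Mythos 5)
Your overall strategy is the same as the paper's (the paper interpolates $X_\varepsilon(\alpha)=\cos\alpha\,X_N(\varepsilon)+\sin\alpha\,X_N^g$ in the spirit of Pastur--Shcherbina and applies the cumulant expansion of Lemma \ref{IPP} with $p=3$, so that the first- and second-cumulant contributions cancel and only third- and fourth-cumulant terms plus a remainder survive), and your reduction of \eqref{comparaison} to \eqref{gau} by taking $B_N^{(1)}=I_n\otimes V_N^*$, $B_N^{(2)}=I_n\otimes V_N$ and summing over the $p^2$ relevant blocks is correct. However, the central quantitative step is not established, and the counting you do write down is wrong. The $m$-th cumulant term carries a prefactor $N^{-m/2}$ (each derivative of the resolvent in an entry of $X/\sqrt N$ costs $N^{-1/2}$), not $N^{-(m+1)/2}$; and in either normalization your formula contradicts your conclusion: by your own count the $m=3$ contribution is $O(1)$, not $O(N^{-1/2})$, while with the correct $N^{-3/2}$ prefactor a uniform bound on each of the $O(N^2)$ summands gives $O(N^{1/2})$, i.e.\ the naive power counting \emph{diverges} at third order (and gives only $O(1)$ at fourth order). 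So the assertion ``the leading deviation occurs at $m=3$ and yields $O(N^{-1/2})$'' does not follow from anything you have argued.

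The missing ingredient is exactly what the paper's estimates of the terms $I_2,I_3$ (and $I_5,I_6$) supply: in every third- (resp.\ fourth-) cumulant term, two of the factors are blocks of $B_N^{(1)}R_N^{\varepsilon,\alpha}$ or $R_N^{\varepsilon,\alpha}B_N^{(2)}$ carrying the \emph{external} pinned indices $v,u$, e.g.\ $(B_N^{(1)}R_N^{\varepsilon,\alpha})^{vp_6}$ and $(R_N^{\varepsilon,\alpha}B_N^{(2)})^{p_5u}$; summing these over their free index with Cauchy--Schwarz and the block $\ell^2$-inequality of Lemma \ref{prelim} (that is, \cite[Lemma 8.1]{BC}, not \cite[Lemma 6.3]{C} as you cite) replaces the naive $O(N^2)$ count of the double sum by $O(N)$ (resp.\ $O(N^{3/2})$), which is what produces the factor $N^{-1/2}$ in \eqref{gau}. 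This gain of a full power of $N$ over the naive bound is the crux of the proposition, not routine bookkeeping; you gesture at ``pinned vs.\ free indices'' in your last paragraph, but the statement that pinned indices ``contribute only $O(1)$'' is not the correct mechanism and, as written, your proof would not close. A correct write-up must also organize the expansion so that the variance terms genuinely cancel (the interpolation does this automatically since $\frac{d}{d\alpha}(\cos^2\alpha+\sin^2\alpha)=0$), and must invoke (H2)/\eqref{cumulants} to bound the remainder $\eta$ of Lemma \ref{IPP} uniformly in $(i,j)$, which is where the $\varepsilon$-truncation of $X_N$ is used.
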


The proof uses a well-known lemma.

\begin{lemma}\label{IPP}
Let $Z$ be a real-valued random variable such that $\mathbb E(|Z|^{p+2})<\infty$. Let 
$\phi\colon\mathbb R\to\mathbb C$ be a function whose first $p+1$ derivatives are continuous and 
bounded. Then, 
\begin{equation}\label{IPP2}
\mathbb E(Z\phi(Z))=\sum_{a=0}^p\frac{\kappa_{a+1}}{a!}\mathbb E(\phi^{(a)}(Z))+\eta, 
\end{equation} 
where $\kappa_{a}$ are the cumulants of $Z $, $|\eta|\leq C\sup_t|\phi ^{(p+1)}(t)|
\mathbb E (|Z|^{p+2})$, and $C$ only depends on $p$.
\end{lemma}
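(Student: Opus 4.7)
The plan is to reduce the identity to the classical moment–cumulant recursion via two Taylor expansions of $\phi$ and of its derivatives. Write $m_k=\mathbb E(Z^k)$. Recall that the recursion $m_{k+1}=\sum_{a=0}^k\binom{k}{a}\kappa_{a+1}m_{k-a}$ for $k=0,\dots,p$ holds whenever $\mathbb E(|Z|^{p+1})<\infty$; it follows by differentiating the relation $M(t)=\exp K(t)$ between the moment generating function and its logarithm and equating Taylor coefficients, so the identity is purely algebraic and only involves cumulants up to order $k+1$.

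First I would expand $\phi$ around $0$ using Taylor's formula with integral remainder to order $p$:
\[
\phi(z)=\sum_{k=0}^{p}\frac{\phi^{(k)}(0)}{k!}z^{k}+R(z),\qquad |R(z)|\le\frac{\sup_{t}|\phi^{(p+1)}(t)|}{(p+1)!}|z|^{p+1}.
\]
Multiplying by $Z$ and taking expectations,
\[
\mathbb E(Z\phi(Z))=\sum_{k=0}^{p}\frac{\phi^{(k)}(0)}{k!}m_{k+1}+\mathbb E(ZR(Z)),
\]
with $|\mathbb E(ZR(Z))|\le\frac{\sup|\phi^{(p+1)}|}{(p+1)!}\mathbb E(|Z|^{p+2})$. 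Substituting the moment-cumulant recursion into $m_{k+1}$ and swapping the order of summation (setting $b=k-a$) gives
\[
\sum_{k=0}^{p}\frac{\phi^{(k)}(0)}{k!}m_{k+1}=\sum_{a=0}^{p}\frac{\kappa_{a+1}}{a!}\sum_{b=0}^{p-a}\frac{\phi^{(a+b)}(0)}{b!}m_{b}.
\]

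Next, for each $a\in\{0,\dots,p\}$ I would Taylor-expand $\phi^{(a)}$ around $0$ to order $p-a$ (using the $p+1-a$ bounded derivatives of $\phi^{(a)}$), which yields
\[
\mathbb E(\phi^{(a)}(Z))=\sum_{b=0}^{p-a}\frac{\phi^{(a+b)}(0)}{b!}m_{b}+\mathbb E(R_a(Z)),\qquad |\mathbb E(R_a(Z))|\le\frac{\sup|\phi^{(p+1)}|}{(p-a+1)!}\mathbb E(|Z|^{p-a+1}).
\]
Combining the previous two displays shows that the identity of the lemma holds with
\[
\eta=\mathbb E(ZR(Z))-\sum_{a=0}^{p}\frac{\kappa_{a+1}}{a!}\mathbb E(R_a(Z)).
\]

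It remains to bound $\eta$. The first term is already controlled by $\sup|\phi^{(p+1)}|\mathbb E(|Z|^{p+2})$. For the others I would use the standard fact that cumulants are polynomials in moments of the same or lower order, so $|\kappa_{a+1}|\le c_a\mathbb E(|Z|^{a+1})$ with $c_a$ depending only on $a$, and then Lyapunov's inequality $\mathbb E(|Z|^{j})\le\mathbb E(|Z|^{p+2})^{j/(p+2)}$ for $j\le p+2$ to estimate
\[
|\kappa_{a+1}|\,\mathbb E(|Z|^{p-a+1})\le c_a\,\mathbb E(|Z|^{p+2})^{(a+1)/(p+2)}\,\mathbb E(|Z|^{p+2})^{(p-a+1)/(p+2)}=c_a\,\mathbb E(|Z|^{p+2}),
\]
and this delivers the bound $|\eta|\le C\sup|\phi^{(p+1)}|\mathbb E(|Z|^{p+2})$ with $C$ depending only on $p$. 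The main bookkeeping obstacle is the rearrangement of the double sum and the choice of the correct Taylor order ($p$ for $\phi$ and $p-a$ for $\phi^{(a)}$) so that the remainders are simultaneously controlled by a single $\sup|\phi^{(p+1)}|$; the moment-interpolation step then makes the product $|\kappa_{a+1}|\mathbb E(|Z|^{p-a+1})$ collapse cleanly to $\mathbb E(|Z|^{p+2})$.
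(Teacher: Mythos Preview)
The paper does not actually prove this lemma; it is stated as ``a well-known lemma'' (a version of the cumulant expansion often attributed to Khorunzhy--Khoruzhenko--Pastur, and appearing for instance in the Pastur--Shcherbina monograph cited as \cite{PS}) and then used without proof in the argument for Proposition~\ref{comptilde}. So there is no proof in the paper to compare yours against.

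That said, your argument is correct. The two Taylor expansions at orders $p$ and $p-a$ are exactly what is needed so that the inner sum $\sum_{b=0}^{p-a}\phi^{(a+b)}(0)m_b/b!$ matches the truncated expansion of $\mathbb E(\phi^{(a)}(Z))$, and the swap of summation via $b=k-a$ is the right bookkeeping. Your handling of the remainder is also sound: the bound $|\kappa_{a+1}|\le c_a\,\mathbb E(|Z|^{a+1})$ follows because $\kappa_{a+1}$ is a polynomial in $m_1,\dots,m_{a+1}$ that is homogeneous of weight $a+1$, and Lyapunov then collapses the product $\mathbb E(|Z|^{a+1})\,\mathbb E(|Z|^{p-a+1})$ to $\mathbb E(|Z|^{p+2})$. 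One small remark: you justify the moment--cumulant recursion by differentiating $M(t)=\exp K(t)$, but under the hypothesis only $\mathbb E(|Z|^{p+2})<\infty$ is assumed, so $M$ need not exist as an analytic function. You already note that the identity is purely algebraic in the moments and cumulants up to order $k+1\le p+1$; it may be cleaner to say that one works either with formal power series or with the characteristic function $t\mapsto\mathbb E(e^{itZ})$, whose Taylor coefficients at $0$ exist to the required order.
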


\begin{proof}[Proof of Proposition {\em \ref{comptilde}}]

Following the approach of \cite[Ch. 18 and 19]{PS} we  introduce the interpolation matrix 
$X_\varepsilon(\alpha)=\cos\alpha X_N(\varepsilon)+\sin\alpha Y_N$,  $\alpha\in[0,{\pi}/{2}]$,
and the corresponding resolvent
$$ R_N^{\varepsilon,\alpha}(\beta)= 
( \beta\otimes I_N-L(C_N,{X_\varepsilon(\alpha)}/{\sqrt{N}} )^{-1},\quad\beta\in \mathbb{ H}^+(M_n(\mathbb{C})).$$ 
We have 
$$
B_N^{(1)}(\mathbb{E}{R}_N^g(\beta)-\mathbb{E}R_N^\varepsilon(\beta)) B_N^{(2)}
=\int_0^{\pi/2}\mathbb{E}\left(B_N^{(1)}\frac{\partial}{\partial\alpha}R_N^{\varepsilon,\alpha}(\beta) 
B_N^{(2)}\right)\,\mathrm{d}\alpha,
$$

$$
\frac{\partial}{\partial\alpha}R_N^{\varepsilon,\alpha}(\beta) 
=R_N^{\varepsilon,\alpha}(\beta)\gamma_2\otimes 
\left(\cos\alpha\frac{X^g_N}{\sqrt{N}}
-\sin\alpha\frac{X_N(\varepsilon)}{\sqrt{N}}
\right)
R_N^{\varepsilon,\alpha}(\beta).
$$

Define a basis of the real vector space of  selfadjoint matrices in $M_N(\mathbb{C})$ as follows:
$$\widetilde e_{j,j}=\hat e_{j,j} , 1\leq j \leq N,$$
$$ \widetilde e_{j,k} =:\frac{1}{\sqrt{2}} (\hat e_{j,k} +\hat e_{k,j}), 1 \leq j<k\leq N,$$
$$\widetilde f_{j,k} =:\frac{i}{\sqrt{2}} (\hat e_{j,k} -\hat e_{k,j}), 1 \leq j<k\leq N.$$
In the following calculation, we write simply
$R_N^{\varepsilon,R}$ in place of $R_N^{\varepsilon,\alpha}(\beta)$ and $X^g$ in place of $X^g_N$:
\begin{align*} 
\frac{\partial}{\partial \alpha}R_N^{\varepsilon,\alpha}=&\frac{1}{\sqrt{N}}\sum_{k=1}^N
(-\sin\alpha{X}_{kk}(\varepsilon)+\cos\alpha X^g_{kk})R_N^{\varepsilon,\alpha} 
\beta_2\otimes \widetilde e_{k,k}R_N^{\varepsilon,\alpha}\\
&+\frac{1}{\sqrt{N}}\sum_{1\leq j<k\le N} 
(-\sin\alpha\sqrt{2}\Re{X}_{jk}(\varepsilon)+\cos\alpha\sqrt{2}\Re X^g_{jk}) 
R_N^{\varepsilon,\alpha}
\gamma_2\otimes \widetilde e_{j,k}R_N^{\varepsilon,\alpha}\\
&+\frac{1}{\sqrt{N}}\sum_{1\leq j<k\le N}
(-\sin\alpha\sqrt{2}\Im{X}_{jk}(\varepsilon) +\cos \alpha\sqrt{2} \Im X^g_{jk})R_N^{\varepsilon,\alpha}
\gamma_2\otimes
\widetilde{f}_{j,k}R_N^{\varepsilon,\alpha}.
\end{align*}
Next, we apply Lemma \ref{IPP} with $p=3$ for $1\leq k\leq N$, $ j<k$, to each random variable $Z$  in the set  
$$\{ \sqrt{2}\Re X_{jk}(\varepsilon),\sqrt{2}\Re X^g_{jk},\sqrt{2}\Im  X_{jk}(\varepsilon), 
\sqrt{2}\Im X^g_{jk},X_{kk}(\varepsilon), X^g_{kk}, j<k\}$$ and to each $\phi$ in the set 
\begin{align*}
\{&\mathrm{Tr}(B_N^{(1)}
  R_N^{\varepsilon,\alpha}\gamma_2\otimes \widetilde e_{k,k}R_N^{\varepsilon,\alpha} B_N^{(2)} e_{q,l}\otimes \hat e_{u,v}), \mathrm{Tr}( B_N^{(1)} R_N^{\varepsilon,\alpha} \gamma_2 \otimes \widetilde e_{j,k}R_N^{\varepsilon,\alpha} B_N^{(2)}e_{q,l}\otimes \hat e_{u,v}),\\
& \mathrm{Tr}( B_N^{(1)} R_N^{\varepsilon,\alpha}\gamma_2\otimes \widetilde f_{j,k}R_N^{\varepsilon,\alpha} B_N^{(2)}e_{q,l}\otimes \hat e_{u,v})\colon 1\leq u,v\leq N, 1\leq q,l\leq m \}.
\end{align*}
Setting now $B=B_N^{(2)}e_{q,l}\otimes\hat e_{u,v}B_N^{(1)},$ we have:
\begin{align*}
&\mathrm{Tr} \big(\frac{ \partial }{\partial \alpha} R_N^{\varepsilon,\alpha} B\big)
=\frac{C(\alpha) }{N^\frac32}\sum_{k=1}^N \kappa_3^{k,k,\varepsilon}\mathrm{Tr} 
(R_N^{\varepsilon,\alpha}\gamma_2\otimes\widetilde e_{k,k}R_N^{\varepsilon,\alpha}\gamma_2\otimes 
\widetilde e_{k,k}R_N^{\varepsilon,\alpha}\gamma_2\otimes \widetilde e_{k,k}R_N^{\varepsilon,\alpha} B)\\
&+
\frac{C(\alpha)}{N^\frac32}    \sum_{1\leq j < k<N} \kappa_3^{j,k,\varepsilon}\mathrm{Tr}
(R_N^{\varepsilon,\alpha}\gamma_2\otimes\widetilde e_{j,k}R_N^{\varepsilon,\alpha}\gamma_2\otimes 
\widetilde e_{j,k}R_N^{\varepsilon,\alpha}
 \gamma_2\otimes \widetilde e_{j,k}R_N^{\varepsilon,\alpha} B)\\
&+\frac{C(\alpha) }{N^\frac32} \sum_{1\leq j < k<N} \widetilde\kappa_3^{j,k,\varepsilon}\mathrm{Tr} 
(R_N^{\varepsilon,\alpha}\gamma_2\otimes\widetilde f_{j,k}R_N^{\varepsilon,\alpha}\gamma_2\otimes 
\widetilde f_{j,k}R_N^{\varepsilon,\alpha}
\gamma_2 \otimes \widetilde f_{j,k}R_N^{\varepsilon,\alpha} B)\\
&+\frac{\widetilde C(\alpha)}{  N^2}\sum_{k=1}^N \kappa_4^{k,k,\varepsilon} \mathrm{Tr}
(R_N^{\varepsilon,\alpha}\gamma_2\otimes\widetilde e_{k,k}R_N^{\varepsilon,\alpha}\gamma_2\otimes 
\widetilde e_{k,k}R_N^{\varepsilon,\alpha}
\gamma_2\otimes\widetilde e_{k,k}R_N^{\varepsilon,\alpha}\gamma_2\otimes 
\widetilde e_{k,k}R_N^{\varepsilon,\alpha} B)\\
& +
\frac{\widetilde C(\alpha)}{N^2}\sum_{1\leq j < k<N}\kappa_4^{j,k,\varepsilon} \mathrm{Tr}
(R_N^{\varepsilon,\alpha}\gamma_2\otimes\widetilde e_{j,k}R_N^{\varepsilon,\alpha}\gamma_2\otimes 
\widetilde e_{j,k}R_N^{\varepsilon,\alpha}
\gamma_2\otimes \widetilde e_{j,k}R_N^{\varepsilon,\alpha}\gamma_2\otimes 
\widetilde e_{j,k}R_N^{\varepsilon,\alpha} B)\\
& +\frac{\widetilde C(\alpha)}{ N^2}\sum_{1\leq j < k<N}\widetilde \kappa_4^{j,k,\varepsilon}\mathrm{Tr }
(R_N^{\varepsilon,\alpha}\gamma_2\otimes\widetilde f_{j,k}R_N^{\varepsilon,\alpha}\gamma_2\otimes 
\widetilde f_{j,k}R_N^{\varepsilon,\alpha}\gamma_2\otimes \widetilde f_{j,k}R_N^{\varepsilon,\alpha}\gamma_2\otimes 
\widetilde f_{j,k}R_N^{\varepsilon,\alpha})+\delta\\
&= I_1+I_2+I_3+I_4 + I_5 + I_6 +\delta,
\end{align*}
where 
$$
\left| \delta \right| \leq C_\varepsilon \frac{\left\| (\Im w)^{-1} \right\|^6}{\sqrt{N}},
$$
for some $C_\varepsilon\ge 0$, while $C(\alpha)$ and $\widetilde C(\alpha)$ are polynomials in $\cos \alpha $ and $\sin \alpha$. In the following, $C_\varepsilon$ 
may vary from line to line. It is clear that 
$$
|I_1|\leq C_\varepsilon\frac{\left\|(\Im \beta)^{-1}\right\|^4}{\sqrt{N}},
\text{ and } 
|I_4|\leq C_\varepsilon \frac{\left\| (\Im w)^{-1} \right\|^5}{{N}} .
$$
Next, $I_2 $ and $I_3$ are   a finite linear combinations  of terms of the form 
\begin{equation}\label{termes} 
\frac{C(\alpha) }{N^\frac32}\sum_{j,k \in {\mathcal E}}  C^{j,k,\varepsilon} \mathrm{Tr}_n 
(\gamma_2{(R_N^{\varepsilon,\alpha})}^{p_1p_2}\gamma_2{(R_N^{\varepsilon,\alpha})}^{p_3p_4}\gamma_2({R_N^{\varepsilon,\alpha}} B_N^{(2)})^{p_5 u} e_{q,l}( B_N^{(1)}R_N^{\varepsilon,\alpha})^{v p_6}),
\end{equation}
where ${\mathcal E}$ is some subset of $\{1,\ldots,N\}^2$, $C^{j,k,\varepsilon}\in\{\kappa_3^{j,k,\varepsilon}, 
\widetilde \kappa_3^{j,k,\varepsilon}\}$, and $(p_1, \ldots,p_6)$ is a permutation of $(k,k,k,j,j,j)$. 
The two following cases hold:
\begin{itemize}

\item   $p_5=p_6$, in which case Lemma \ref{prelim} yields
 \begin{align*}   
\|\sum_{j,k \in {\mathcal E}}&  C^{j,k,\varepsilon} \mathrm{Tr}_n 
(\gamma_2{(R_N^{\varepsilon,\alpha})}^{p_1p_2}
\gamma_2{(R_N^{\varepsilon,\alpha})}^{p_3p_4}
\gamma_2({R_N^{\varepsilon,\alpha}} B_N^{(2)})^{p_5 u} e_{q,l}( B_N^{(1)}R_N^{\varepsilon,\alpha})^{v p_6})\|\\
&\leq  C_\varepsilon\|\gamma_2\|^3\|(\Im \beta)^{-1}\|^2N 
\sum_{j=1}^N \|(B_N^{(1)}{R_N^{\varepsilon,\alpha}})^{vj}\|\|({R_N^{\varepsilon,\alpha}} B_N^{(2)})^{ju}\|\\
&\leq C_\varepsilon\|\gamma_2\|^3 \|(\Im \beta)^{-1}\|^2N 
\Big(\sum_{j=1}^N \|(B_N^{(1)}{R_N^{\varepsilon,\alpha}})^{vj}\|^2\Big)^{1/2} 
\Big(\sum_{j=1}^N\| ({R_N^{\varepsilon,\alpha}} B_N^{(2)})^{ju}\|^2 \Big)^{1/2}\\ 
&\leq C_\varepsilon\left\|\gamma_2 \right\|^3 \|(\Im \beta )^{-1} \|^4nN.
\end{align*}
\item $p_5\ne p_6$, in which case Lemma \ref{prelim} yields 
\begin{align*} 
\|\sum_{j,k \in {\mathcal E}}&  C^{j,k,\varepsilon} \mathrm{Tr}_n 
(\gamma_2{(R_N^{\varepsilon,\alpha})}^{p_1p_2}
\gamma_2{(R_N^{\varepsilon,\alpha})}^{p_3p_4}
\gamma_2({R_N^{\varepsilon,\alpha}} B_N^{(2)})^{p_5 u} e_{q,l}( B_N^{(1)}R_N^{\varepsilon,\alpha})^{v p_6})\|\\
&\leq  C_\varepsilon\|\gamma_2\|^3 \|(\Im \beta)^{-1}\|^2
\Big(\sum_{k=1}^N 
\|({R_N^{\varepsilon,\alpha}} B_N^{(2)})^{ku}\|\Big)\Big(\sum_{j=1}^N\|( B_N^{(1)}{R_N^{\varepsilon,\alpha}})^{vj}\|\Big) \\
&\leq  C_\varepsilon N \|\gamma_2\|^3 \|(\Im \beta)^{-1} \|^2
\Big(\sum_{k=1}^N 
\|({R_N^{\varepsilon,\alpha}} B_N^{(2)})^{ku}\|^2\Big)^{1/2}\Big(\sum_{j=1}^N\|(B_N^{(1)}{R_N^{\varepsilon,\alpha}})^{vj}\|^2
\Big)^{1/2}\\
&\leq  C_\varepsilon n N \left\|\gamma_2\right\|^3 \left\|(\Im \beta)^{-1} \right\|^4.
\end{align*}
\end{itemize}
We see now that  
$
| I_j | \leq C_\varepsilon {\| (\Im \beta)^{-1} \|^4}/{\sqrt{N}}$ for $j=2,3$.
Finally, $I_5 $ and $I_6$ are finite linear combinations  of terms of the form 
\begin{equation}\label{termes2} 
\frac{\widetilde C(\alpha)}{N^2}\sum_{j,k \in {\mathcal E}} C^{j,k,\varepsilon}\mathrm{Tr}_n 
( \gamma_2{(R_N^{\varepsilon,\alpha})}^{p_1p_2}\gamma_2 {(R_N^{\varepsilon,\alpha})}^{p_3p_4}\gamma_2 {(R_N^{\varepsilon,\alpha})}^{p_5p_6}\gamma_2({R_N^{\varepsilon,\alpha}}B_N^{(2)})^{p_7 u} e_{q,l} (B_N^{(1)}{R_N^{\varepsilon,\alpha}})^{v p_8})
\end{equation}
where ${\mathcal E}$ is some subset of $\{1,\ldots,N\}^2$, 
$C^{j,k,\varepsilon}\in\{\kappa_4^{j,k,\varepsilon},\widetilde\kappa_4^{j,k,\varepsilon}\}$ and $(p_1, \ldots,p_6)$ is a permutation of $(k,k,k,k,j,j,j,j)$. Lemma \ref{prelim} shows that
the norm of such a term can be estimated by
\begin{align*}
 &\frac{1}{N^2}C_\varepsilon N \|\gamma_2 \|^4 
\|(\Im \beta)^{-1} \|^4
\Big(\sum_{k=1}^N \| ({R_N^{\varepsilon,\alpha}}B_N^{(2)})^{ku}\|\Big) \\
&\le \frac{1}{N^2} C_\varepsilon  N^\frac32 \|\gamma_2  \|^4 \|(\Im \beta)^{-1} \|^4
\Big(\sum_{k=1}^N \|({R_N^{\varepsilon,\alpha}} B_N^{(2)})^{ku}\|^2\Big)^{1/2}
\le \frac{1}{N^2} C_\varepsilon \sqrt{n} N^\frac32\|\gamma_2\|^4 \|(\Im \beta)^{-1} \|^5.
 \end{align*}
It follows that $
\left| I_j \right| \leq C_\varepsilon  {\left\| (\Im w)^{-1} \right\|^5}/{\sqrt{N}} 
$ for $j=5,6 $.
The proposition follows.
\end{proof}

We show next that $\mathcal{F}_N^\varepsilon(\beta)$ is close to its expected value. This result uses concentration inequalities in the presence of a Poincar\'e inequality. 
We recall that if the law of a random variable $X$ satisfies the Poincar\'e inequality with 
constant $C$ and  $\alpha\in\mathbb R\setminus\{0\}$, then the law of $\alpha X$ satisfies the Poincar\'e 
inequality with constant $\alpha^2 C$. Moreover, suppose that the probability measures 
$\mu_1,\ldots,\mu_r$ on $\mathbb{R}$ satisfy the Poincar\'e inequality with constants 
$C_1,\ldots,C_r$ respectively. Then the product measure 
$\mu_1\otimes \cdots \otimes \mu_r$ on $\mathbb{R}^r$ satisfies the Poincar\'e 
inequality with constant $C=\max\{C_1,\dots,C_r\}$. That is, if
$f:\mathbb C^r\to\mathbb R$ is an arbitrary differentiable function  such that $f$ and its gradient ${\rm grad} f$ are square integrable relative to $\mu_1\otimes \cdots \otimes \mu_r$, then
$$
\mathbb{V}(f)\leq C \int_{\mathbb R^r} \Vert {\rm grad} f \Vert_2 ^2 d\mu_1\otimes \cdots \otimes \mu_r.
$$
Here $\mathbb{V}(f)=\int|f-\int f\,{\rm d}\mu_1\otimes\cdots\otimes\mu_r|^2\,{\rm d}
\mu_1\otimes\cdots\otimes\mu_r$ (see \cite[Theorem 2.5]{GuZe03}).

We use the following concentration result (see {\cite[Lemma 4.4.3 and Exercise 4.4.5]{AGZ} or \cite[Chapter 3]{L}}).

\begin{lemma}\label{Herbst}
	Let $\mathbb{P}$ be a probability measure on $\mathbb{R}^r$ which satisfies a Poincar\'e inequality 
	with constant $C$. Then there exist $K_1>0$ and $K_2>0$ such that,  for every  Lipschitz function 
	$F$ on $\mathbb{R}^r$ with Lipschitz constant $|F|_{\text{\rm Lip}}$, and for every $\varepsilon>0$, we have
	$$
	 \mathbb{P}\left(\vert F-\mathbb E_{\mathbb{P}}(F)\vert>\varepsilon\right)\leq 
	K_1 \exp\Big(-\frac{\varepsilon}{K_2\sqrt{C}|F|_{\text{\rm Lip}}}\Big).$$
\end{lemma}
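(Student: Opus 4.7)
The plan is a Laplace-transform argument of Gromov--Milman / Aida--Stroock type, combined with Markov's inequality. Because a Poincar\'e inequality controls only variance (not entropy), the conclusion will be exponential rather than Gaussian concentration. By mollifying with a smooth compactly supported approximation of the identity, I may assume $F\in C^\infty(\mathbb{R}^r)$ with $\|{\rm grad}\,F\|_\infty\le|F|_{\rm Lip}$, and by rescaling $F\mapsto F/|F|_{\rm Lip}$ I may assume $|F|_{\rm Lip}=1$. Set $G=F-\mathbb{E}_\mathbb{P}(F)$ and $Z(\lambda)=\mathbb{E}_\mathbb{P}(e^{\lambda G})$; it suffices to bound $Z(\lambda_0)$ by an absolute constant for a single $\lambda_0$ of order $1/\sqrt{C}$.

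The key step is to feed the test function $g=e^{\lambda G/2}$ into the Poincar\'e inequality. Since $|{\rm grad}\,g|^2=(\lambda^2/4)|{\rm grad}\,G|^2 e^{\lambda G}\le(\lambda^2/4)e^{\lambda G}$, one obtains
$$Z(\lambda)-Z(\lambda/2)^2=\mathbb{V}(g)\le\frac{C\lambda^2}{4}\,Z(\lambda),$$
which rearranges to $Z(\lambda)\le Z(\lambda/2)^2/(1-C\lambda^2/4)$ whenever $C\lambda^2<4$. Iterating $n$ times halves the argument at each stage:
$$Z(\lambda)\le Z(\lambda/2^n)^{2^n}\prod_{k=0}^{n-1}\Bigl(1-\frac{C\lambda^2}{4^{k+1}}\Bigr)^{-2^k}.$$
Applying Poincar\'e directly to $G$ gives $\mathbb{V}(G)\le C$, hence $Z(\mu)=1+O(\mu^2)$ near $\mu=0$ and the prefactor $Z(\lambda/2^n)^{2^n}$ stays bounded as $n\to\infty$. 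Letting $n\to\infty$ yields
$$Z(\lambda)\le\prod_{k=0}^{\infty}\Bigl(1-\frac{C\lambda^2}{4^{k+1}}\Bigr)^{-2^k}=:\Phi(\lambda\sqrt{C}),$$
with $\Phi$ finite on $[0,2)$. Fixing, say, $\lambda_0=1/\sqrt{C}$ furnishes an absolute bound $Z(\lambda_0)\le K_1$. Markov's inequality then produces $\mathbb{P}(G>\varepsilon)\le K_1 e^{-\lambda_0\varepsilon}=K_1 e^{-\varepsilon/\sqrt{C}}$; the same argument applied to $-F$ gives the two-sided bound, and restoring the Lipschitz constant replaces $\varepsilon$ by $\varepsilon/|F|_{\rm Lip}$, yielding the statement with $K_2$ an absolute constant.

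The main obstacle is the iteration bookkeeping: one must verify that the infinite product converges to a numerical constant independent of $F$ and of $\lambda$ throughout the chosen range. A secondary issue is the mollification step, which must preserve both the Lipschitz constant and the validity of Poincar\'e for smooth enough test functions so that $g=e^{\lambda G/2}$ and its gradient are genuinely square-integrable; convolution with a Gaussian of vanishing variance addresses both concerns, and a limiting argument transfers the concentration bound from the mollified $F$ back to the original.
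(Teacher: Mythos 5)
The paper does not actually prove Lemma \ref{Herbst}: it cites \cite[Lemma 4.4.3 and Exercise 4.4.5]{AGZ} and \cite[Chapter 3]{L}, and the argument you give --- feeding $g=e^{\lambda G/2}$ into the Poincar\'e inequality to get $Z(\lambda)\le Z(\lambda/2)^2(1-C\lambda^2/4)^{-1}$, iterating to bound the Laplace transform by the convergent product $\prod_{k\ge0}(1-C\lambda^2 4^{-k-1})^{-2^k}$ on the range $C\lambda^2<4$, and finishing with Markov's inequality at $\lambda_0=1/\sqrt C$ together with the rescaling by $|F|_{\rm Lip}$ and the two-sided bound via $-F$ --- is precisely the Aida--Stroock proof carried out in those references. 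That part of your bookkeeping, including the convergence of the infinite product and the fact that the constants are absolute, is correct.

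The one step that is not justified as written is the a priori integrability. The Poincar\'e inequality, in the form the paper states it (and in the form you need to compute $\mathbb{V}(g)$), applies to functions $g$ with $g$ and ${\rm grad}\,g$ square integrable; for $g=e^{\lambda G/2}$ this requires knowing $Z(\lambda)=\mathbb{E}_{\mathbb{P}}(e^{\lambda G})<\infty$ beforehand, which is essentially the conclusion you are trying to establish, and the expansion $Z(\mu)=1+O(\mu^2)$ used to control the prefactor $Z(\lambda/2^n)^{2^n}$ likewise requires exponential integrability to differentiate under the integral. Your proposed remedy, convolution with a Gaussian of vanishing variance, only smooths $F$; it leaves the linear growth of $G$ untouched and therefore cannot supply the square integrability of $e^{\lambda G/2}$. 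The standard fix is truncation in the range rather than smoothing: prove the bound first for $F_M=\max(\min(F,M),-M)$, which is bounded and Lipschitz with the same constant, so that every moment is finite and each step of the iteration is licit, observe that the resulting bound $Z_M(\lambda_0)\le K_1$ is uniform in $M$, and then let $M\to\infty$ using Fatou's lemma and $\mathbb{E}_{\mathbb{P}}(F_M)\to\mathbb{E}_{\mathbb{P}}(F)$ to transfer the Laplace-transform bound, hence the concentration inequality, to $F$ itself. With that substitution your argument is complete and coincides with the proof the paper points to.
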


The following result is similar to Proposition \ref{prop.6.2}(1).

\begin{prop}
	Suppose that $T_N,S_N\in M_N(\mathbb C)$ are contractions of uniformly bounded rank.  Given $\varepsilon>0$ and  $\beta\in \mathbb H^+(M_n(\mathbb C))$, almost surely
	\begin{equation*}
	\lim_{N\to\infty}\|(I_n\otimes T_N)(R_N^\varepsilon(\beta)-
	\mathbb E
	R_N^\varepsilon(\beta))(
	I_n\otimes S_N)\|=0.
	\end{equation*}
\end{prop}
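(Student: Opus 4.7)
The plan is to imitate the proof of Proposition \ref{prop.6.2}(1), replacing the Haar concentration on $\mathrm U(N)$ by the Gaussian-type concentration provided by the Poincar\'e inequality (H1). First I would reduce, via the singular value decomposition, to the case in which $T_N=\phi_N\psi_N^*$ and $S_N=\rho_N\sigma_N^*$ are rank one with $\|\phi_N\|,\|\psi_N\|,\|\rho_N\|,\|\sigma_N\|\le 1$; the general statement then follows by writing each $T_N,S_N$ as a sum of at most $R$ such rank-one contractions, where $R$ is the uniform rank bound. In the rank-one case
\begin{equation*}
(I_n\otimes T_N)\bigl(R_N^\varepsilon(\beta)-\mathbb{E} R_N^\varepsilon(\beta)\bigr)(I_n\otimes S_N)=(I_n\otimes\phi_N)\bigl(\Phi_N-\mathbb{E}\Phi_N\bigr)(I_n\otimes\sigma_N^*),
\end{equation*}
where $\Phi_N:=(I_n\otimes\psi_N^*)R_N^\varepsilon(\beta)(I_n\otimes\rho_N)\in M_n(\mathbb{C})$, so it is enough to prove the entrywise almost sure convergence $(\Phi_N)_{p'q'}-\mathbb{E}(\Phi_N)_{p'q'}\to 0$ for each of the finitely many pairs $(p',q')\in\{1,\dots,n\}^2$.

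Next I would parametrize the randomness by the $N^2$ independent real variables $(X_{kk}(\varepsilon))_k$, $(\sqrt 2\,\Re X_{kl}(\varepsilon))_{k<l}$, $(\sqrt 2\,\Im X_{kl}(\varepsilon))_{k<l}$. The corresponding directional derivative of $X_N(\varepsilon)$ is, in each case, one of the selfadjoint matrix units $\widetilde e_{j,k},\widetilde f_{j,k}$ introduced earlier in this section; these form an orthonormal basis of the real Hilbert space of selfadjoint $N\times N$ matrices equipped with the Hilbert--Schmidt inner product. By (H1) the joint law of the real parameters satisfies a Poincar\'e inequality with constant $C_{PI}(\varepsilon)$. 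The resolvent identity gives, for the parameter $\alpha$ associated with the matrix unit $\widetilde H_\alpha$,
\begin{equation*}
\partial_\alpha(\Phi_N)_{p'q'}=-\tfrac{1}{\sqrt N}\zeta^*(\gamma_2\otimes\widetilde H_\alpha)\eta=-\tfrac{1}{\sqrt N}\,\mathrm{Tr}\Bigl(\widetilde H_\alpha\,{\textstyle\sum_{r,s}}(\gamma_2)_{rs}\eta_s\zeta_r^*\Bigr),
\end{equation*}
where $\zeta:=R_N^\varepsilon(\beta)^*(e_{p'}\otimes\psi_N)=\sum_r e_r\otimes\zeta_r$ and $\eta:=R_N^\varepsilon(\beta)(e_{q'}\otimes\rho_N)=\sum_s e_s\otimes\eta_s$, with $\zeta_r,\eta_s\in\mathbb{C}^N$. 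Applying Parseval's identity relative to the orthonormal basis $\{\widetilde H_\alpha\}$ to the selfadjoint and anti-selfadjoint parts of $M:=\sum_{r,s}(\gamma_2)_{rs}\eta_s\zeta_r^*$, followed by the standard sub-multiplicativity $\|M\|_2\le\|\gamma_2\|\,\|\zeta\|\,\|\eta\|$ and $\|\zeta\|,\|\eta\|\le\|R_N^\varepsilon(\beta)\|\le\|(\Im\beta)^{-1}\|$, yields
\begin{equation*}
\sum_\alpha|\partial_\alpha(\Phi_N)_{p'q'}|^2=\tfrac{1}{N}\|M\|_2^2\le\tfrac{1}{N}\|\gamma_2\|^2\|(\Im\beta)^{-1}\|^4,
\end{equation*}
uniformly in the realization. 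In particular every entry $(\Phi_N)_{p'q'}$ is a Lipschitz function of the real parameters with Lipschitz constant of order $N^{-1/2}$.

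An application of Lemma \ref{Herbst} then yields
\begin{equation*}
\mathbb{P}\bigl(|(\Phi_N)_{p'q'}-\mathbb{E}(\Phi_N)_{p'q'}|>\eta\bigr)\le K_1\exp\!\left(-\frac{\eta\sqrt N}{K_2\sqrt{C_{PI}(\varepsilon)}\,\|\gamma_2\|\,\|(\Im\beta)^{-1}\|^2}\right),\quad\eta>0,
\end{equation*}
which is summable in $N$ for every fixed $\eta>0$. The Borel--Cantelli lemma gives the almost sure pointwise convergence for each fixed pair $(p',q')$, and a finite intersection over $(p',q')\in\{1,\dots,n\}^2$ finishes the proof in the rank-one case, and hence in general by the first-paragraph reduction. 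The main technical point is the gradient estimate: the crude bound $|\partial_\alpha(\Phi_N)_{p'q'}|^2\le C/N^2$ for an individual parameter would sum to $O(1)$ over the $N^2$ real variables. The rank-one reduction, combined with Parseval for $\{\widetilde H_\alpha\}$, is exactly what collapses this naive total back to the $O(1/N)$ bound required to produce an $N^{-1/2}$ Lipschitz constant and thus close the Borel--Cantelli argument.
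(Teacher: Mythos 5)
Your proposal is correct and follows essentially the same route as the paper: reduce to rank-one contractions, show the resulting scalar quantities are Lipschitz in the underlying real entries with constant $O(N^{-1/2})$, and conclude with the Poincar\'e-based concentration of Lemma \ref{Herbst} and Borel--Cantelli. The only difference is cosmetic: the paper obtains the Lipschitz bound directly from the resolvent identity as a Hilbert--Schmidt estimate in $Z=X_N(\varepsilon)/\sqrt{N}$, while you recover the same $N^{-1/2}$ constant by the gradient-plus-Parseval computation over the basis $\{\widetilde e_{j,k},\widetilde f_{j,k}\}$.
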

\begin{proof}
	As in the proof of  Proposition \ref{prop.6.2}, it suffices to consider the case in which $T_N$ and $S_N$ are contractions of rank $1$. Write $I_n$ as a sum $Q_1+\cdots+Q_n$ of rank $1$ projections. Then the norm in the statement
	is at most equal to
	\begin{equation*}
	\sum_{j,k=1}^n \|(Q_j\otimes T_N)(R_N^\varepsilon(\beta)-
	\mathbb E
	R_N^\varepsilon(\beta))
	(Q_k\otimes S_N)\|
	=\sum_{j,k=1}^n |\mathrm{Tr}(R_N^\varepsilon(\beta)-
	\mathbb E
	R_N^\varepsilon(\beta))Q_{j,k}|,
	\end{equation*}
	where each $Q_{j,k}$ is a contraction of rank $1$. Given a selfadjoint matrix $Z_N
	\in M_N(\mathbb C)$, we set
	$
	R(Z,\beta)=
	(\beta\otimes I_N-L(C_N,Z))^{-1}$
	and $f_{N,j,k}(Z)=\mathrm{Tr} R(Z,\beta)Q_{j,k}$. We have
	\begin{equation*}
	f_N(Z_1)-f_N(Z_2)=\mathrm{Tr}(R(Z_1,\beta)(\gamma_2\otimes(Z_1-Z_2))R(Z_2,\beta)Q_{j,k}), 
	\end{equation*}
	and thus
	\begin{align*}
	|f_N(Z_1)-f_N(Z_2)|&\le \|\gamma_2\otimes(Z_1-Z_2)\|_2
	\|R(Z_1,\beta)Q_{j,k}R(Z_2,\beta)\|_2\\
	&\le \|\gamma_2\|_2\|(Z_1-Z_2)\|_2\|(\Im\beta)^{-1}\|^2.
	\end{align*}
	An application of Lemma \ref{Herbst} and of the comment preceding it yield
	\begin{equation*}
	\mathbb P(|\mathrm{Tr}(R_N^\varepsilon(\beta)-
	\mathbb E
	R_N^\varepsilon(\beta))Q_{j,k}|>\delta)
	\le 2\exp(-CN^{1/2}\|(\Im\beta)^{-1}\|^{-2}
	\delta)
	\end{equation*}
	for every $\delta>0$, with a constant $C$ that does not depend in $N,j,$ or $k$. The proposition follows by an application of the Borel-Cantelli lemma.	
\end{proof}

\begin{cor}
For every $\beta \in \mathbb{H}^+(M_n(\mathbb{C}))$ and every $\varepsilon>0$ we have, almost surely,
\begin{equation}\label{concentre}
\lim_{N\to\infty}\mathcal{F}_N^{\varepsilon}(\beta)-\mathbb{E}(\mathcal{F}_N^{\varepsilon}(\beta))=0.
\end{equation}
\end{cor}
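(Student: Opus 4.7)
The plan is to reduce the statement directly to the preceding proposition by a simple isometry argument. Set $T_N := V_N P_N^* \colon \mathbb{C}^p \to \mathbb{C}^N$. Because $P_N P_N^* = I_p$ and $V_N^* V_N = I_N$, we have $T_N^* T_N = I_p$, so $T_N$ is an isometry and $\mathcal{F}_N^{\varepsilon}(\beta) = (I_n \otimes T_N^*) R_N^\varepsilon(\beta) (I_n \otimes T_N)$ by definition \eqref{GNeps}. In particular, $I_n \otimes T_N$ is an isometry from $\mathbb{C}^{np}$ onto the range of the orthogonal projection $I_n \otimes Q_N$, where $Q_N := T_N T_N^* = V_N P_N^* P_N V_N^* \in M_N(\mathbb{C})$ has rank exactly $p$.

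The key elementary identity is that for every matrix $X \in M_n(\mathbb{C}) \otimes M_N(\mathbb{C})$,
\begin{equation*}
\|(I_n \otimes T_N^*) X (I_n \otimes T_N)\| = \|(I_n \otimes Q_N) X (I_n \otimes Q_N)\|,
\end{equation*}
since the isometry $I_n \otimes T_N$ identifies $\mathbb{C}^{np}$ with the range of $I_n \otimes Q_N$. Applying this identity with $X = R_N^\varepsilon(\beta) - \mathbb{E} R_N^\varepsilon(\beta)$ yields
\begin{equation*}
\|\mathcal{F}_N^{\varepsilon}(\beta) - \mathbb{E}(\mathcal{F}_N^{\varepsilon}(\beta))\| = \|(I_n \otimes Q_N)(R_N^\varepsilon(\beta) - \mathbb{E} R_N^\varepsilon(\beta))(I_n \otimes Q_N)\|.
\end{equation*}

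Since $Q_N$ is a self-adjoint contraction of rank $p$, independent of $N$, the hypotheses of the preceding proposition are met with the choice $T_N = S_N = Q_N$. That proposition applied to the fixed $\beta$ and $\varepsilon$ shows that the right-hand side tends to zero almost surely, which establishes \eqref{concentre}. There is no substantial obstacle here; the argument is purely a packaging of the concentration result already proved for rank-bounded compressions of $R_N^\varepsilon$.
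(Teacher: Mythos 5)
Your argument is correct and is exactly what the paper intends: the corollary is an immediate application of the preceding proposition, with the deterministic rank-$p$ contraction $Q_N=V_NP_N^*P_NV_N^*$ (equivalently the compression by the isometry $V_NP_N^*$) playing the role of $T_N=S_N$, the isometry identity handling the harmless $p$ versus $N$ dimension bookkeeping. No gap; this matches the paper's (implicit) proof.
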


Observe now that
\begin{align*}
\mathcal{F}_N (\beta)-&(\omega (\beta) -s\gamma_1)^{-1}\otimes I_p
=  \mathcal{F}_N(\beta)-\mathcal{F}_N^\varepsilon(\beta)+\mathcal{F}_N^\varepsilon(\beta)-\mathbb{E}(\mathcal{F}_N^\varepsilon(\beta))\\
&+
\mathbb{E}(\mathcal{F}_N^\varepsilon  
(\beta))-\mathbb{E}( \mathcal{F}^g_N(\beta))+\mathbb{E}(\mathcal{F}^g_N(\beta))  -(\omega (\beta) -s\gamma_1)^{-1}\otimes I_p.
\end{align*}
We let $N\to\infty$  and then $\varepsilon\to0$ and apply  \eqref{di}, \eqref{concentre}, 
Proposition \ref{comptilde}, and Proposition \ref{gaussien} to obtain the following result.

\begin{thm}\label{thwigner}
For every $\beta\in \mathbb{H}^+(M_n(\mathbb{C}))$ we have, almost surely, when $N$ goes to infinity, $\lim_{N\to\infty}\mathcal{F}_N(\beta)=(\omega (\beta) -s\gamma_1)^{-1}\otimes I_p$.
\end{thm}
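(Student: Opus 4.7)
The plan is to assemble the four-term decomposition displayed immediately before the theorem statement and control each summand using the tools already proved in this section. I would fix $\beta\in\mathbb H^+(M_n(\mathbb C))$, fix an auxiliary parameter $\varepsilon>0$, take $N\to\infty$ first, and then let $\varepsilon\to 0$. Writing
\begin{align*}
\mathcal F_N(\beta)-(\omega(\beta)-s\gamma_1)^{-1}\otimes I_p
&=\bigl[\mathcal F_N(\beta)-\mathcal F_N^\varepsilon(\beta)\bigr]
+\bigl[\mathcal F_N^\varepsilon(\beta)-\mathbb E(\mathcal F_N^\varepsilon(\beta))\bigr]\\
&\quad+\bigl[\mathbb E(\mathcal F_N^\varepsilon(\beta))-\mathbb E(\mathcal F_N^g(\beta))\bigr]
+\bigl[\mathbb E(\mathcal F_N^g(\beta))-(\omega(\beta)-s\gamma_1)^{-1}\otimes I_p\bigr],
\end{align*}
I treat each bracket separately by a triangle inequality.

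For the first bracket, estimate \eqref{di} gives, almost surely for all sufficiently large $N$, the bound $\|\mathcal F_N(\beta)-\mathcal F_N^\varepsilon(\beta)\|\le \varepsilon\|\gamma_2\|\|(\Im\beta)^{-1}\|^2$, which is independent of $N$ and disappears in the final limit $\varepsilon\to 0$. The second bracket vanishes almost surely as $N\to\infty$ by the concentration statement \eqref{concentre}. The third bracket is controlled by Proposition~\ref{comptilde}, which delivers an $O(N^{-1/2})$ bound (with an $\varepsilon$-dependent polynomial factor $P_\varepsilon(\|(\Im\beta)^{-1}\|)$ that does not depend on $N$); for each fixed $\varepsilon$ this tends deterministically to $0$ as $N\to\infty$. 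The fourth bracket tends to $0$ as $N\to\infty$ by Proposition~\ref{gaussien}.

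Putting the four estimates together, there is a full-measure event on which, for every $\varepsilon>0$,
\[
\limsup_{N\to\infty}\bigl\|\mathcal F_N(\beta)-(\omega(\beta)-s\gamma_1)^{-1}\otimes I_p\bigr\|
\le \varepsilon\,\|\gamma_2\|\,\|(\Im\beta)^{-1}\|^2.
\]
Since the left-hand side does not depend on $\varepsilon$, letting $\varepsilon\to 0$ yields almost sure convergence to $0$. To make the full-measure event simultaneous for all $\varepsilon$, I would apply the previous step along a countable sequence $\varepsilon_k\downarrow 0$ and intersect the resulting null sets, which is harmless since the approximating matrices $X_N(\varepsilon_k)$ are only used through \eqref{di} and \eqref{concentre}.

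The only subtle point is the order of the two limits: the $\varepsilon$-dependent polynomial $P_\varepsilon$ from Proposition~\ref{comptilde} may grow without control as $\varepsilon\to 0$, but this is harmless because we send $N\to\infty$ first for fixed $\varepsilon$, killing that entire term before $\varepsilon$ is touched. No further obstacle arises; everything else is a direct combination of the four cited inputs.
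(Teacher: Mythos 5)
Your proposal is correct and follows essentially the same route as the paper, which obtains the theorem precisely from the displayed four-term decomposition together with \eqref{di}, \eqref{concentre}, Proposition~\ref{comptilde}, and Proposition~\ref{gaussien}, letting $N\to\infty$ first and then $\varepsilon\to 0$. Your extra care in intersecting the null sets along a countable sequence $\varepsilon_k\downarrow 0$ is a harmless and sensible way of making the almost-sure statement uniform in $\varepsilon$.
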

Everything is now in place for completing the argument.

\begin{proof}[Proof of Theorem {\em\ref{thm8.2}}]
We noted earlier that $\omega(ze_{1,1}-\gamma_0)$ is analytic in 
 $\mathbb C\setminus\sigma(P(a,b))$.
For fixed $t\in\mathbb R\setminus\sigma(P(a,b))$, set
$\Psi(\beta)=\beta \otimes 1 -L(a,b)$
and $\Psi_N(\beta)=\beta \otimes 1 -L(C_N,X_N/\sqrt{N})$.  
According to Lemma \ref{o lema}, $\Psi(te_{1,1}-\gamma_0)$  is invertible, and thus there exists $\delta>0$ such that  
\begin{equation}\label{dist}
d(0, \sigma(\Psi(te_{1,1}-\gamma_0)))\geq \delta>0.\end{equation}
  Theorem \ref{stronguni} and Proposition \ref{Camille} imply that, almost surely, for every complex polynomial $Q$ in one variable we have  
$$
\lim_{N\to\infty}\Vert Q(\Psi_N(te_{1,1}-\gamma_0)\Vert=\Vert Q( \Psi(te_{1,1}-\gamma_0))\Vert.
$$
Asymptotic freeness implies that almost surely,  for every complex polynomial $Q$ in one variable,
$$
\lim_{N\to\infty}(\mathrm{tr}_n\otimes\mathrm{tr}_N)( Q(\Psi_N(te_{1,1}-\gamma_0)))=(\mathrm{tr}_n\otimes\tau)( Q(\Psi(te_{1,1}-\gamma_0))).
$$
Thus,
denoting the Hausdorff distance by $d_H$,
we deduce that almost surely for $N $ large enough,  
$$
d_H(\sigma(\Psi_N(te_{1,1}-\gamma_0)), \sigma(\Psi(te_{1,1}-\gamma_0))) \leq \delta/4.$$
Note that $\Psi_N(te_{1,1}-\gamma_0)$ is selfadjoint. For an arbitrary $\beta \in M_n(\mathbb{C})$, we have
\begin{equation*}
\|\Psi_N(te_{1,1}-\gamma_0)-\Psi_N(\beta)\|
\le\|(te_{1,1}-\gamma_0)-\beta\|.
\end{equation*}
 It follows from \eqref{dist} that, almost surely for all large $N$, if $\|\beta-(te_{1,1}-\gamma_0)\|<\delta/4$, then 
$$
d(0, \sigma(\Psi_N(\beta))) \geq \delta/2.
$$
Moreover, denoting by $s_1(M)$ the smallest singular value of an arbitrary matrix $M$, 
$$
s_1(\Psi_N(\beta))\geq s_1(\Psi_N(te_{1,1}-\gamma_0))-\Vert \beta-(te_{1,1}-\gamma_0)\Vert.
$$ 
Thus,  almost surely for all large $N$, provided that $\|\beta-(te_{1,1}-\gamma_0)\|<\delta/4$, we have
$
\|\mathcal{F}_N(\beta)\|=\Vert (\Psi_N(\beta))^{-1}\Vert \leq  4/\delta.
$
In other words, almost surely, the family $\{\mathcal{F}_N\}_{N\in\mathbb N}$ is normal
in a neighborhood of $te_{1,1}-\gamma_0$. 
According to Theorem \ref{thwigner}, for any $\beta\in\mathbb H^+
(M_n(\mathbb C))$, almost surely  $\mathcal{F}_N$ converges towards $(\omega( \beta)-s \gamma_1 )^{-1}\otimes I_p$.
Set 
$$
\Lambda= \{w\in M_n(\mathbb{C}), \Vert w-(te_{1,1}-\gamma_0)\Vert<\delta/4,\; \Im w >0\}.
$$
Almost surely for any $w\in\Lambda$ such that $\Im w\in M_n(\mathbb{Q})$ and $\Re w\in 
M_n(\mathbb{Q})$, $\mathcal{F}_N (w)$ converges towards $(\omega( w)-s \gamma_1 )^{-1}\otimes I_p$.
The Vitali-Montel convergence theorem implies that that almost surely $\mathcal{F}_N$ converges towards a holomorphic function on 
$\{w\in M_n(\mathbb C), \|w-(te_{1,1}-\gamma_0)\|<\delta/4\},$ and, in particular, 
$\mathcal{F}_N (ze_{1,1}-\beta_0)$ converges for any $z\in\mathbb C$ such that $|z-t|<\delta/4$ towards
$(\omega ((ze_{1,1}-\gamma_0) -s\gamma_1)^{-1}\otimes I_p$.

 Now, the Hurwitz theorem on zeros of analytic functions implies that, almost surely for large $N$, the function $F_N(ze_{1,1}-\gamma_0)
=\det(I_n\otimes I_p-\mathcal{F}_N(ze_{1,1}-\gamma_0))
$ has as many zeros in a neighborhood of $t$ as the function
\begin{equation*}
\det(I_n\otimes I_p-(\gamma_1\otimes T)((\omega ((ze_{1,1}-\gamma_0) -s\gamma_1)^{-1}\otimes I_p).
\end{equation*}
Now, note that 
\begin{align*}
\gamma_1((\omega(ze_{11}-\gamma_0)&-s\gamma_1)^{-1}
\otimes T-I_n\otimes I_p \\
& =  \left(\gamma_1\otimes T
-s\gamma_1\otimes I_p\right)
 (\omega(ze_{11}-\gamma_0)-s\gamma_1)^{-1}\otimes I_p-I_n\otimes I_p\\
&= \left(\gamma_1\otimes T-\omega(ze_{11}-\gamma_0)\otimes I_p
\right)(\omega(ze_{11}-\gamma_0)-s\gamma_1)^{-1}\otimes I_p
\end{align*}
and 
$$ 
\omega(ze_{1,1}-\gamma_0) \otimes I_p -\gamma_1 \otimes T
=\sum_{i=1}^p (\omega(ze_{11}-\gamma_0) -\gamma_1 \theta_i)\otimes e_{i,i}.
$$
Therefore
$$
\det( \omega(ze_{11}-\gamma_0) \otimes I_p -\gamma_1 \otimes T)
=\prod_{i=1}^p \det (\omega(ze_{11}-\gamma_0) -\gamma_1\theta_i).
$$
The theorem follows. 
\end{proof}

\section{Appendix}
The following result is \cite[Lemma 8.1]{BC}.

\begin{lemma}\label{prelim} For any matrix $M \in M_n(\mathbb{C})\otimes M_N(\mathbb{C})$, 

 \begin{equation}\label{lp}
 \frac{1}{N} \sum_{k,l =1}^N \|M^{kl} \|^2 \leq n \|M \|^2
 \end{equation}
and for any fixed $k$, \begin{equation}\label{O}  \sum_{l =1}^N \|M^{lk} \|^2 \leq n \|M \|^2\end{equation}
and \begin{equation}\label{l} \sum_{l =1}^N \|M^{kl} \|^2 \leq n \|M \|^2,\end{equation}
where $M^{kl}$ is defined by \eqref{mij}.
\end{lemma}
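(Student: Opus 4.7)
The plan is to unpack the definition of $M^{ij}$ and reduce everything to orthonormality together with a one-line Hilbert--Schmidt estimate. Writing $M=\sum_{p,q=1}^n e_{p,q}\otimes M_{p,q}$ with $M_{p,q}\in M_N(\mathbb{C})$, a direct computation from \eqref{mij} shows that $M^{ij}$ is the $n\times n$ matrix with $(p,q)$-entry $(M_{p,q})_{i,j}$. Equivalently,
$$
\langle M^{ij}v,u\rangle=\langle M(v\otimes e_j),u\otimes e_i\rangle,\qquad u,v\in\mathbb{C}^n.
$$
This identity is what drives all three inequalities.

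To prove \eqref{l}, I would fix $k$ and, for each $l\in\{1,\dots,N\}$, choose (by compactness of the unit sphere in $\mathbb{C}^n$) a unit vector $v_l\in\mathbb{C}^n$ with $\|M^{kl}v_l\|=\|M^{kl}\|$. The identity above yields $M^{kl}v_l=(I_n\otimes e_k^*)M(v_l\otimes e_l)$, where $e_k^*\colon\mathbb{C}^N\to\mathbb{C}$ denotes the $k$-th coordinate functional. Set $A=(I_n\otimes e_k^*)M\colon \mathbb{C}^n\otimes\mathbb{C}^N\to\mathbb{C}^n$, an operator of norm at most $\|M\|$. The vectors $\{v_l\otimes e_l\}_{l=1}^N$ form an orthonormal system in $\mathbb{C}^n\otimes\mathbb{C}^N$, hence
$$
\sum_{l=1}^N\|M^{kl}\|^2=\sum_{l=1}^N\|A(v_l\otimes e_l)\|^2\leq \|A\|_{\mathrm{HS}}^2\leq n\|A\|^2\leq n\|M\|^2,
$$
where the last inequality in the chain uses the elementary fact that any operator into an $n$-dimensional Hilbert space has Hilbert--Schmidt norm at most $\sqrt{n}$ times its operator norm.

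For \eqref{O} I would apply \eqref{l} to $M^*$. The block decomposition gives $(M^*)_{p,q}=M_{q,p}^*$ and hence $(M^*)^{kl}=(M^{lk})^*$, so in particular $\|(M^*)^{kl}\|=\|M^{lk}\|$ and $\|M^*\|=\|M\|$; the conclusion follows. Finally, \eqref{lp} is obtained by summing \eqref{l} over $k=1,\dots,N$.

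I do not anticipate any real obstacle: the whole argument is a clean reinterpretation of the partial-trace coefficients $M^{ij}$ as matrix elements of $M$ against elementary tensors, combined with the orthonormality of $\{v_l\otimes e_l\}_{l=1}^N$ and the trivial Hilbert--Schmidt bound. The only delicate point is recognising that, to pull out the factor $n$ in \eqref{l} and \eqref{O}, the target space of $A$ must have dimension $n$ rather than $nN$, which is exactly why one contracts by $I_n\otimes e_k^*$ before summing.
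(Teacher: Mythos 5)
Your argument is correct. Note first that this lemma is not proved in the paper at all: it is quoted verbatim from \cite{BC} (Lemma 8.1 there), so there is no in-paper proof to compare with; your write-up is a complete, self-contained substitute. The key identifications are right: $M^{ij}$ is the $(i,j)$ block of $M$ viewed in $M_N(M_n(\mathbb C))$, equivalently $\langle M^{ij}v,u\rangle=\langle M(v\otimes e_j),u\otimes e_i\rangle$; the vectors $v_l\otimes e_l$ are orthonormal because the $e_l$ are and each $v_l$ is a unit vector; and the chain $\sum_l\|M^{kl}\|^2=\sum_l\|A(v_l\otimes e_l)\|^2\le\|A\|_{\mathrm{HS}}^2\le n\|A\|^2\le n\|M\|^2$ is valid since $A=(I_n\otimes e_k^*)M$ maps into $\mathbb C^n$ and hence has at most $n$ nonzero singular values. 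The adjoint swap $(M^*)^{kl}=(M^{lk})^*$ giving \eqref{O}, and summing \eqref{l} over $k$ to get \eqref{lp}, are both fine. For comparison, a standard alternative (in the spirit of the source) avoids choosing the maximizing vectors $v_l$: from $\|M^{kl}\|^2\le\mathrm{Tr}_n\bigl(M^{kl}(M^{kl})^*\bigr)$ and the block identity $\sum_{l=1}^N M^{kl}(M^{kl})^*=(MM^*)^{kk}$ one gets $\sum_l\|M^{kl}\|^2\le\mathrm{Tr}_n\bigl((MM^*)^{kk}\bigr)\le n\|(MM^*)^{kk}\|\le n\|MM^*\|=n\|M\|^2$, the last estimate holding because a diagonal block of a positive matrix is a compression of it. Both routes exploit the same dimension-$n$ phenomenon; yours replaces the trace computation by Bessel's inequality along an orthonormal system, which is equally rigorous and arguably more transparent about where the factor $n$ comes from.
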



\end{document}